\renewcommand{\natural}{{\mathbb{N}}}
\newcommand{\real}{{\mathbb{R}}}
\newcommand{\map}[3]{#1: #2 \rightarrow #3}
\newcommand{\norm}[1]{\|#1\|} 
\newcommand{\sign}{\operatorname{sign}}
\newcommand{\smallsum}{\textstyle\sum\limits}
\newcommand{\until}[1]{\{1,\ldots,#1\}}
\newcommand{\CC}{\mathcal{C}} 
\newcommand{\EE}{\mathcal{E}} 
\newcommand{\GG}{\mathcal{G}}
\newcommand{\KK}{\mathcal{K}}
\newcommand{\II}{\mathcal{I}} 
\newcommand{\NN}{\mathcal{N}} 
\renewcommand{\SS}{\mathcal{S}} 
\newcommand{\TT}{\mathcal{T}}
 \newcommand{\subj}{\text{subj.\ to}}
\newcommand{\argmin}{\mathop{\rm argmin}}
\newcommand{\nbrs}{\mathcal{N}}
\newcommand{\innbrs}{\mathcal{N}}
\newcommand{\StatexIndent}[1][3]{%
  \setlength\@tempdima{\algorithmicindent}%
  \Statex\hskip\dimexpr#1\@tempdima\relax}
\algnewcommand{\algorithmicgoto}{\textbf{go to }}%
\algnewcommand{\Goto}[1]{\algorithmicgoto Line~\ref{#1}}%
\algnewcommand{\Label}{\State\unskip}
\newtheorem{theorem}{Theorem}[section]
\newtheorem{proposition}[theorem]{Proposition}
 \newtheorem{lemma}[theorem]{Lemma}
\newtheorem{remark}[theorem]{Remark}
\newtheorem{assumption}[theorem]{Assumption}
\newcommand{\GS}[1]{{\color{red} #1}}
\renewcommand{\inf}{\operatornamewithlimits{inf\vphantom{p}}}
\renewcommand{\liminf}{\operatornamewithlimits{liminf\vphantom{p}}}
\renewcommand{\limsup}{\operatornamewithlimits{limsup\vphantom{p}}}
\renewcommand{\lim}{\operatornamewithlimits{lim\vphantom{p}}}
\newcommand\oprocendsymbol{\hbox{$\square$}}
\newcommand\oprocend{\relax\ifmmode\else\unskip\hfill\fi\oprocendsymbol}
\def\eqoprocend{\tag*{$\square$}}
\newcommand{\bx}{\mathbf{x}}
\newcommand{\bs}{\mathbf{s}}
\newcommand{\bsigma}{\boldsymbol{\sigma}}
\newcommand{\tildex}{\widetilde{\mathbf{x}}}
\newcommand{\hatx}{\widehat{\mathbf{x}}}
\newcommand{\Deltax}{\Delta \mathbf{x}}
\newcommand{\bDelta}{\boldsymbol{\Delta}}
\newcommand{\avgdec}{\bar{\boldsymbol{s}}}
\newcommand{\avggrad}{\bar{\boldsymbol{\sigma}}}
\newcommand{\hath}{\widehat{h}}
\newcommand{\hbu}{\widehat{\mathbf{u}}}
\newcommand{\hbg}{\widehat{\mathbf{g}}}
\newcommand{\bv}{\mathbf{v}}
\newcommand{\bu}{\mathbf{u}}
\newcommand{\bn}{\mathbf{n}}
\newcommand{\bw}{\mathbf{w}}
\newcommand{\by}{\mathbf{y}}
\newcommand{\bz}{\mathbf{z}}
\newcommand{\bA}{\mathbf{A}}
\newcommand{\bD}{\mathbf{D}}
\newcommand{\bb}{\mathbf{b}}
\newcommand{\1}{\mathbf{1}}
\newcommand{\0}{\mathbf{0}}
\newcommand{\bepsilon}{\boldsymbol{\epsilon}}
\newcommand{\boldeta}{\boldsymbol{\eta}}
\newcommand{\kron}{\otimes}
\newcommand{\tf}{\tilde{f}}
\newcommand{\surr}{\widehat{f}}
\newcommand{\reg}{r}
\newcommand{\subgrad}{\widetilde{\nabla}}
\definecolor{blue@O4S}{RGB}{0, 41, 69}
\definecolor{emph@O4S}{RGB}{0, 93, 137}
\definecolor{red@O4S}{RGB}{127,0,0}
\definecolor{gray@O4S}{RGB}{112, 112, 112}
\def \algname/{{\sc Block-SONATA}}
\def \blockpushsum/{{Block-wise Push-sum Average Consensus}}
\begin{document}

\title{
Distributed Big-Data Optimization \\via Block-wise Gradient Tracking
}

\author{Ivano Notarnicola$^{\ast}$, Ying Sun$^{\ast}$, Gesualdo Scutari, 
Giuseppe Notarstefano
\thanks{$^\ast$These authors equally contributed and are in alphabetic order.}
  \thanks{
  Preliminary short versions of this paper have appeared 
  as~\cite{notarnicola2017cdc,notarnicola2017camsap}.
  }

\thanks{The work of Notarnicola and Notarstefano has received funding from the
  European Research Council (ERC) under the European Union's
  Horizon 2020 research and innovation programme (grant agreement No 638992 -
  OPT4SMART).
  The work of Sun and Scutari has been supported by the USA National Science
  Foundation under Grants CIF 1564044, CIF 1719205,  and CAREER Award 1555850; and in part 
  by the Office of Naval Research under the Grant N00014-16-1-2244 and  the Army Research
Office under Grant W911NF1810238.
  }
\thanks{Ivano Notarnicola is with the
  Department of Engineering, Universit\`a del Salento, Lecce, Italy, 
  \texttt{ivano.notarnicola@unisalento.it.}  } 
\thanks{Ying Sun and Gesualdo Scutari are with the School of Industrial Engineering, 
Purdue University, West-Lafayette, IN, USA, \texttt{\{sun578,gscutari\}@purdue.edu.}}
\thanks{Giuseppe Notarstefano is with the Department of Electrical, Electronic and Information 
Engineering,
University of Bologna, Bologna, Italy, 
\texttt{giuseppe.notarstefano@unibo.it}}
}

\maketitle

\begin{abstract}
  We study \emph{distributed big-data nonconvex} optimization in multi-agent
  networks.  We consider the (constrained) minimization of the sum of a smooth
  (possibly) nonconvex function, i.e., the agents' sum-utility, plus a convex
  (possibly) nonsmooth regularizer. Our interest is on big-data problems in
  which there is a large number of variables to optimize. If treated by means of
  standard distributed optimization algorithms, these large-scale problems may
  be intractable due to the prohibitive local computation and communication
  burden at each node.  We propose a novel distributed solution method where, at
  each iteration, agents update in an uncoordinated fashion only one block of the
  entire decision vector. To deal with the nonconvexity of the cost function,
  the novel scheme hinges on Successive Convex Approximation (SCA) techniques
  combined with a novel \emph{block-wise} perturbed push-sum consensus
  protocol, which is instrumental to perform local block-averaging operations
  and tracking of gradient averages. Asymptotic convergence to stationary
  solutions of the nonconvex problem is established. Finally, numerical results
  show the effectiveness of the proposed algorithm and highlight how the block
  dimension impacts on the communication overhead and practical convergence
  speed.
\end{abstract}

\section{Introduction}
\label{sec:intro}
Many modern control, estimation and learning applications lead to large-scale  optimization
problems, i.e., problems with a huge number of variables to optimize. %
These problems
are often referred to as \emph{big-data}, and call for the design of  tailored
algorithms.
In this paper we consider  \emph{distributed} (nonconvex) \emph{big-data
optimization}. That is, we aim at solving large-scale optimization  problems over networks in a
distributed way by addressing the following two challenges: (i) \emph{optimizing over
(or even computing the gradient with respect to) all the variables can be too
costly, and} (ii) \emph{broadcasting to neighbors the entire solution estimate would
incur in an unaffordable communication overhead.}
The literature on parallel and distributed methods is abundant; however,  we are not aware of any  work that can deal with both challenges (i) and (ii) over networks, as detailed next. 

\subsection{Related Works}
We organize the relevant literature in two main groups: centralized and parallel
algorithms for large-scale optimization; and distributed algorithms
applicable to multi-agent networks (with no specific topology).

\noindent \textbf{Parallel algorithms.} %
Parallel Block-Coordinate-Descent (BCD) methods are well-established methods in optimization;
more recently, they have been proven to be  particularly effective in solving
very large-scale (mainly convex)  optimization problems arising, e.g.,  from  data-intensive applications. 
Examples include %
\cite{nesterov2012efficiency} for convex, smooth functions, and 
\cite{richtarik2012parallel,necoara2016parallel}
for composite optimization;  a detailed overview of BCD methods can be found in
\cite{Wright2015}.
Parallel solution methods based on  Successive Convex Approximation (SCA) techniques 
have been proposed in~\cite{facchinei2015parallel} to deal with nonconvex problems; 
see~\cite{Multi-agent-book} for a recent research tutorial on the subject.
In~\mbox{\cite{mokhtari2016doubly}} block coordinate-descent and stochastic-gradient
methods have been combined to optimize big-data, sum-of-utilities (cost)
functions.
These algorithms, however, are not implementable in a (fully) distributed
setting; they are instead designed to be run on ad-hoc computational
architectures, e.g., shared-memory systems or star networks.

\noindent \textbf{Distributed multi-agent algorithms.} %
The literature on distributed methods for multi-agent optimization is vast. Here, we discuss  only primal-based algorithms, 
as they are more closely related to the approach  proposed in this paper.
Distributed subgradient methods have been proposed in the early works
\cite{nedic2009distributed,nedic2010constrained}, to solve convex, problems over
undirected graphs. The extension to nonconvex
costs has been developed in~\cite{bianchi2013convergence}.
The generalization to (time-varying) digraphs was studied
in~\cite{nedic2015distributed} and \cite{tatarenko2017nonconvex} for convex and
nonconvex objectives, respectively; these schemes combine distributed
(sub-)gradient with push-sum consensus \cite{benezit2010weighted} updates.
A Nesterov acceleration of the mentioned approach applied to convex,
smooth problems has been proposed in~\cite{jakovetic2014fast} with a convergence
rate analysis.
Local, private constraints are handled in \cite{lee2016asynchronous} and
\cite{margellos2018distributed}, where distributed methods based on a random
projection subgradient and a proximal minimization are proposed respectively.
All these methods need to use a diminishing step-size to converge to an exact,
consensual solution, thus converging at a sub-linear rate.  On the other hand,
with a constant (sufficiently small) step-size, they can be faster, but they
would converge only to a neighborhood of the solution set.

Primal-based distributed methods that converge to an exact consensual
solution using fixed step-sizes are available in the literature; they can 
be roughly grouped as i)
\cite{shi2015extra,shi2015proximal}; ii)
\cite{zanella2011newton,zanella2012asynchronous,varagnolo2016newton}, iii)
\cite{dilorenzo2015distributed,dilorenzo2016next,xu2015augmented,sun2016distributed,sun2017distributed,Multi-agent-book,xu2018convergence};
and iv) \cite{nedic2017achieving,qu2017harnessing,xi2018addopt,xin2018linear}. 
While substantially different, these schemes build on the idea of correcting the
decentralized gradient- (or Newton-) related direction to cancel the steady state
error in it. More specifically, in \cite{shi2015extra} and its proximal variant
\cite{shi2015proximal}, the gradient direction is corrected using iterate and
gradient information of the last two iterations. 
In~\cite{zanella2011newton,zanella2012asynchronous,varagnolo2016newton}, 
the novel idea of distributively estimating a Newton-Raphson direction by 
means of suitable average consensus ratios has been
introduced. In~\cite{carli2015analysis} the same approach has been extended 
to deal with directed, asynchronous networks with lossy communications.
The third and fourth class of works is based on the idea of gradient tracking:
each agent updates its own local variables along a surrogate direction that
tracks the gradient of the sum-utility (which is not locally available). This
idea was proposed independently in
\cite{dilorenzo2015distributed,dilorenzo2016next} for constrained nonsmooth
nonconvex problems, and in \cite{xu2015augmented,xu2018convergence} for strongly convex,
unconstrained, smooth, optimization. The works
\cite{sun2016distributed,sun2017distributed,Multi-agent-book} extended the algorithms to
(possibly) time-varying digraphs (still in the nonconvex setting of
\cite{dilorenzo2015distributed,dilorenzo2016next}). A convergence rate analysis
of the scheme \cite{xu2015augmented} was later developed in
\cite{nedic2016cdc,nedic2017achieving,qu2016cdc,qu2017harnessing}, 
with~\cite{nedic2016cdc,nedic2017achieving} considering time-varying (directed) graphs. 
Another scheme, still based on the
idea of gradient tracking, has been recently proposed in \cite{xin2018linear}.
All the above methods are based on the optimization and communication at each 
iteration of the \emph{entire} set of variables of every agents (or some related 
quantities of the same size).

First attempts to block-wise distributed optimization have been proposed
in~\cite{carli2013distributed,notarnicola2016randomized,notarnicola2018partitioned}
for a structured, \emph{partitioned} optimization set-up in which the cost
function of each agent depends on its (block) variables and those of its
neighbors. In~\cite{wang2018coordinate} a distributed stochastic gradient
method has been proposed whereby agents optimize at each iteration only 
a subset of their variables (still communicating the entire vector).

\subsection{Major Contributions} 
We propose a distributed algorithm over networks for, possibly nonconvex,
big-data optimization problems, that explicitly accounts for challenges (i) and
(ii).
To cope with these two challenges, we propose a distributed scheme in which, at
every iteration, each agent optimizes over and communicates only \emph{one
  block} of the local solution estimate (and of auxiliary vectors) rather than
all the components.  Blocks are selected in an uncoordinated fashion by means of
an ``essentially cyclic rule'', thus guaranteeing all of them to be persistently
updated during the algorithmic evolution.
Specifically, inspired to the two optimization algorithms 
NEXT (in-Network succEssive conveX approximaTion)
\cite{dilorenzo2015distributed,dilorenzo2016next} and SONATA 
(distributed Successive cONvex Approximation algorithm over 
Time-varying digrAphs) \cite{sun2016distributed,sun2017distributed},
\emph{not suitable for big-data}
problems, we propose a block-iterative two-step (optimization and averaging)
procedure, named \algname/.
Each agent solves a (small) local optimization problem, depending only on the
selected block, with cost function being a strongly convex surrogate of the
nonconvex sum-cost function, whose gradient is a local estimate of the total
gradient of the (smooth part of the) sum-cost function.
The (block-wise) optimization step is combined with a twofold \emph{block-wise}
perturbed averaging scheme on the local solution estimate and on the local
estimate of the total gradient.
This scheme guarantees both the asymptotic agreement of the local
solution estimates and the tracking of total gradient.
We remark that this novel block-wise perturbed averaging protocol
  extends a (static) block averaging protocol proposed for an abstract message
  passing model in \cite{tsitsiklis1986distributed}, and is thus of independent
  interest for other distributed computation tasks. It can be used by agents of
  a network to reach consensus or track the average of local signals by
  exchanging with neighboring agents only one block of their local vector.
For the proposed distributed optimization algorithm we prove that: local
solution estimates are asymptotically consensual to their (weighted) average,
and any limit point of the average sequence is a stationary solution of the
optimization problem.
The algorithm analysis has two key distinctive features. First, a proper
convergence analysis of the block-wise perturbed averaging scheme is developed
based on suitable block-induced time-varying digraphs. Second, errors due to
inexact block-wise minimizations and to uncoordinated block updates are properly
handled to show that a suitably designed merit function decreases along the 
algorithmic evolution.

The rest of the paper is organized as follows. In Section~\ref{sec:setup} we present
the problem set-up. In Section~\ref{sec:block_consensus} we introduce a
block-wise perturbed consensus scheme that will act as a building block for our
distributed big-data optimization algorithm presented in
Section~\ref{sec:algorithm}, along with its convergence properties. In
Section~\ref{sec:simulations} we provide numerical computations to test our
algorithm. Finally, the convergence analysis is deferred to the appendix.

\section{Distributed Big-Data Optimization Set-up}
\label{sec:setup}

We consider a multi-agent system composed of $N$ agents, aiming at cooperatively
solving the following composite (possibly) nonconvex large-scale optimization problem
\begin{align}
\begin{split}
  \min_{\bx} \hspace{0.3cm} & \: 
  U ( \bx) \triangleq \sum_{i=1}^N f_i ( \bx ) + \sum_{\ell = 1}^B \reg_\ell ( \bx_\ell )
  \\
  \subj \: & \: \bx_\ell \in \KK_\ell, \quad \ell \in\until{B},
\end{split}
\label{eq:problem}
\end{align}
where  $\bx$ is the vector of optimization variables, partitioned in $B$ blocks as
\begin{align*}
  \bx \triangleq 
  \begin{bmatrix}
    \bx_1 
    \\[-0.3em] 
    \vdots 
    \\[-0.3em] 
    \bx_B
  \end{bmatrix},
\end{align*}
with $\bx_\ell\in\real^{d}$, $\ell\in\until{B}$; $\map{f_i}{\real^{dB}}{\real}$ 
is the cost function of agent $i$, assumed to be smooth but (possibly) nonconvex; 
$\map{\reg_\ell}{\real^{d}}{\real}$, $\ell\in\until{B}$, is 
a convex (possibly) nonsmooth function; and $\cal K_\ell$, $\ell\in\until{B}$, is a closed 
convex set; we denote by $\KK\triangleq \KK_1\times \cdots \times \KK_{B}$ 
the feasible set of~\eqref{eq:problem}.
The nonsmooth terms $\reg_\ell$ are usually used to promote
some extra structure in the solution, such as (group) sparsity.
We study~\eqref{eq:problem} under the following assumption.

\begin{assumption}[On the Optimization Problem]\mbox{}
\label{ass:cost_functions}
\begin{enumerate}[leftmargin=0.65cm]
  \item Each $\KK_\ell \neq \emptyset$  is closed and convex;
  \item Each $\map{f_i}{\real^{dB}}{\real}$ is 
  $\CC^1$ on (an open set containing) $\KK$;
  \item Each $\nabla f_i$ is $L_i$-Lipschitz  continuous 
    and bounded on $\KK$;
  \item Each $\map{\reg_\ell}{\real^d}{\real}$ is convex (possibly nonsmooth)  on $\KK$,
  with bounded subgradients on $\KK$;%
  \item $U$ is coercive on $\KK$, i.e., 
  $\lim_{\bx\in\KK,\|\bx\|\to\infty} U(\bx) = \infty$. \oprocend
\end{enumerate}
\end{assumption}
The above assumptions are quite standard and satisfied by many practical  problems; 
see, e.g.,~\cite{facchinei2015parallel}. Here, we only remark that we do not assume any 
convexity of $f_i$. In the following, we also make the blanket assumption that each agent 
$i$ knows only its own cost function $f_i$, the regularizers $\reg_\ell$ and the feasible 
set $\KK$, but not the other agents' functions.

\smallskip
\noindent \emph{On the communication network:} 
The  communication among agents is modeled as a fixed, directed
graph $\GG = (\until{N},\EE)$, where $\EE\subseteq \until{N} \times \until{N}$
is the set of edges. The edge $(i,j)\in \EE$ models the fact that agent $i$
can send a message to agent $j$.
We denote by $\innbrs_i$ the set of \emph{in-neighbors} of node $i$ in the fixed
graph $\GG$, i.e.,
$\innbrs_i \triangleq \{j \in \until{N} \mid (j,i) \in \EE \}$.  We
assume that $\EE$ contains self-loops and, thus, $\innbrs_i$ contains $\{i\}$
itself.
We use the following assumption.
\smallskip
\begin{assumption}
The digraph $\GG$ is strongly connected.~\oprocend
\label{ass:strong_conn}
\end{assumption} 
\smallskip

\noindent\emph{Algorithmic Desiderata:} Our goal is to solve problem~\eqref{eq:problem} in 
a distributed fashion, leveraging local communications among neighboring agents. 
As a major departure from current  literature on distributed optimization,  
here we focus on \emph{big-data} instances of~\eqref{eq:problem} 
in which the vector of variables $\bx$ is composed by a huge number of 
components ($B$ is very large).  In such problems, 
minimizing the sum-utility with respect to all the components of $\bx$, or even computing the 
gradient or evaluating the value of a single function $f_i$, can require substantial computational efforts. 
Moreover, exchanging  an estimate of the \emph{entire} local decision variables 
$\bx$ over the network (like current distributed schemes do) is not efficient 
or even feasible, due to the excessive communication overhead. We design next 
the first scheme able to deal with such challenges.

\section{Block-wise Perturbed Push-Sum Consensus}
\label{sec:block_consensus}

In this section we design a building block of our distributed optimization
algorithm, namely a \emph{block-wise perturbed push-sum consensus algorithm}. 
We first devise  the ``unperturbed'' instance of the scheme, suitable to solve the average 
consensus problem over digraphs via block-communications.
Then, we introduce the general perturbed version of the scheme, which allows
agents to solve more general tasks, such as tracking the average of given
(time-varying) agents' signals.

\subsection{Block-wise Push-sum Average Consensus}
Consider a system of $N$ agents aiming at reaching consensus on
the average of given initial values. Let the communication network be
modeled as a digraph $\GG$ satisfying Assumption~\ref{ass:strong_conn}. 
To solve this problem, one can leverage the popular push-sum (consensus)
algorithm \cite{benezit2010weighted}. 
However, differently from this scheme in which agents need to exchange their \emph{entire}
local estimates at each iteration, here we consider a \emph{block-wise}
communication protocol. 
Specifically, while at every iteration $t$ each agent $i$ can update its entire
(average estimate) vector $\bz_{(i,:)}^t \in \real^{dB}$, 
it sends to out-neighbors \emph{one block only}. Let $\bz_{(i,\ell_i^t)}^t\in \real^{d}$
denote the $\ell_i^t$-th block that, at time $t$, agent $i$ selects (according to a proper rule) 
and broadcasts to its out-neighbors. To update  $\bz_{(i,:)}^t$, agent $i$ runs a push-sum consensus  on each
block $\ell$ of $\bz_{(i,:)}^t$ {\it separately}, using only the information received from its in-neighbors that sent
block $\ell$ at time $t$ (if any).

Since no coordination is assumed among agents in selecting their blocks, 
different agents will likely select blocks with different index, i.e., $\ell_i^t \neq \ell_j^t$, with $i \neq j$.
This induces a \emph{block-dependent} communication graph, one for each block index $\ell$, which 
is, in general, a subgraph of $\GG$. In this subgraph, agent $j$ is an in-neighbor of agent $i$ at time $t$ 
if $j\in \nbrs_i$ and $\ell_j^t = \ell$, i.e., agent $j$ sent its block $\ell$ to $i$ at time $t$.
This suggests the definition of \emph{block-dependent} neighbor sets. For each agent $i\in\until{N}$ and 
each block $\ell\in\until{B}$, define
\begin{align*}
  \nbrs_{i,\ell}^t \triangleq \{ j \in \nbrs_i \mid \ell_j^t = \ell \}  \cup \{i \} \subseteq \nbrs_i,
\end{align*}
which includes, besides agent $i$, only the in-neighbors of agent $i$ in $\GG$ that
sent (i.e., updated) block $\ell$ at time $t$.
Consistently, we denote by $\GG_\ell^t\triangleq (\until{N},\EE_\ell^t)$ the \emph{time-varying} subgraph of
$\GG$ associated to block $\ell$ at iteration $t$. Its edge set is $\EE_\ell^t
\triangleq \{(j,i)\in\EE \mid j\in \nbrs_{i,\ell}^t, i\in\until{N}\}.$

Note that each (time-varying) digraph $\GG_\ell^t$ is induced by the block selection 
rules (independently) adopted by the agents, so that the connectivity properties
of all digraphs are coupled; this interplay will be discussed shortly (cf. Assumption~\ref{ass:block_selection} and 
Proposition~\ref{prop:induced_graph_connectivity}).

The following table ``\blockpushsum/'' formally introduces the algorithm 
from the perspective of agent $i$ only.
The algorithm consists of applying the push-sum consensus 
protocol in a \emph{block-wise} fashion over the time-varying subgraphs 
$\GG_\ell^t$ introduced above. 
As in the existing consensus protocols, $a_{ij\ell}^t$ in \eqref{eq:block_consensus}
are nonnegative weights to be properly chosen. We let $\bA_\ell^t \triangleq
[a_{ij\ell}^t]_{i,j=1}^N$ be the weight-matrix matching $\GG_\ell^t$
(cf. Assumption~\ref{ass:col_stoch}).
Each agent $i \in \until{N}$ initializes its local variables as $\phi_{(i,\ell)}^0 = 1$ 
and $\bz_{(i,\ell)}^0$ an arbitrary value in $\real^d$ for all $\ell\in\until{B}$.
\renewcommand{\thealgorithm}{}
\floatname{algorithm}{}

\begin{algorithm}[!ht]
  \begin{algorithmic}
      \smallskip

    \StatexIndent[0.2] Select $\ell_i^t \in \until{B}$

    \StatexIndent[0.2]
    For each $j \in \innbrs_i$ receive $\phi_{(j,\ell_j^t)}^{t}$ and 
    $\bz_{(j,\ell_j^t)}^{t}$
    \StatexIndent[0.2] For each $\ell \in \until{B}$ compute
    \begin{align}\label{eq:block_consensus}
    \begin{split}
      \phi_{(i,\ell)}^{t+1} 
      & = \sum_{j\in \innbrs_{i,\ell}^t } a_{ij\ell}^t \, \phi_{(j,\ell)}^{t}
      \\
      \bz_{(i,\ell)}^{t+1} & = 
      \sum_{j\in \innbrs_{i,\ell}^t } 
      \frac{a_{ij\ell}^t \,\phi_{(j,\ell)}^{t} }{\phi_{(i,\ell)}^{t+1}}
        \, \bz_{(j,\ell)}^{t} 
    \end{split}
    \end{align}
       
  \end{algorithmic}
  \caption{\blockpushsum/}
  \label{alg:block-wise_push-sum}
\end{algorithm}

Convergence of the \blockpushsum/
depends on the choice of the weight matrices as well as the block-selection rules 
employed by the agents (which affect the connectivity properties of each digraph 
sequence $\{\GG_\ell^t\}_{t\ge 0}$, $\ell\in\until{B}$).  
Sufficient conditions on these parameters guaranteeing convergence are 
discussed next.\smallskip

\noindent \emph{On the choice of $\bA_\ell^t$}: We make the following assumption 
on each $\bA_\ell^t$, which is standard for the push-sum algorithm.
\smallskip 
\begin{assumption}
  Given the sequence of graphs $\{\GG_\ell^t\}_{t\ge 0}$, $\ell\in\until{B}$ and $t\geq 0$, 
  each matrix $\bA_\ell^t$ satisfies the following: 
  \begin{enumerate}
  	\item[(a)] $a_{ij\ell}^t > \kappa$, if $(j,i)\in\EE_\ell^t$; and $a_{ij\ell}^t =0$, if $(j,i)\notin\EE_\ell^t$;
  	\item[(b)] it is column stochastic, 
  that is, $\1^\top \bA_\ell^t = \1^\top$;
  \end{enumerate}
  where $\kappa$ is some positive constant.~\oprocend
  \label{ass:col_stoch}
\end{assumption}

A natural question is whether a matrix $\bA_\ell^t$ satisfying 
Assumption~\ref{ass:col_stoch} can be build by the agents using 
only local information. Next, we propose a simple procedure to locally build a
valid $\bA_\ell^t$.
Being the underlying communication digraph $\GG$ static and 
strongly connected (cf.~Assumption~\ref{ass:strong_conn}), we assume
that a column stochastic matrix $\tilde{\bA}$ matching $\GG$ is available,
i.e., $\tilde{a}_{ij} > 0$ if $(j,i)\in\EE$ and $\tilde{a}_{ij} = 0$
otherwise; and $\1^\top \tilde{\bA} = \1^\top$.
To construct $\bA_\ell^t$  in a distributed way,
we start noticing that at iteration $t$, an agent $j$ either sends a block $\ell$ to all
its out-neighbors in $\GG$, $\ell=\ell_j^t$, or to none, $\ell\neq\ell_j^t$.
Thus, let us focus on the $j$-th column of $\bA_\ell^t$, denoted 
by $\bA_\ell^t(:,j)$. If agent $j$ does not send
block $\ell$ at iteration $t$, $\ell\neq\ell_j^t$, all the components of
$\bA_\ell^t(:,j)$ will be zero except for $a_{jj\ell}^t$. Thus, for the  $j$-th column to 
be stochastic, it must be $\1^\top  \bA_\ell^t(:,j) = a_{jj\ell}^t=1$ (i.e.,
$\bA_\ell^t(:,j)$ is the $j$-th vector of the canonical basis). 
Vice versa, if $j$ sends block $\ell$, all its out-neighbors in $\GG$ will
receive it and, thus, column $\bA_\ell^t(:,j)$ has the same nonzero entries as
column $\tilde{\bA}(:,j)$ of $\tilde{\bA}$. 
Since $\tilde{\bA}$ is column stochastic, one can set $\bA_\ell^t(:,j)=\tilde{\bA}(:j)$.
Note that each agent can locally construct its own weights satisfying 
the above rule. 
In summary, for each $i\in\until{N}$ and $\ell\in\until{B}$, weights 
$a_{ij\ell}^t$ can be chosen as
\begin{align}
  a_{ij\ell}^t \triangleq 
  \begin{cases}
	  \tilde{a}_{ij}, \: \: & \text{if } j \in \nbrs_i \text{ and } \ell = \ell_j^t,
	  \\
	  1,                    & \text{if } j=i \text{ and } \ell \neq \ell_i^t, 
	  \\
	  0,                    & \text{otherwise}.
	\end{cases}
	\label{eq:weights}
\end{align}

\noindent \emph{On the choice of  the block selection rule}: To guarantee convergence 
of the \blockpushsum/ over time-varying digraphs, it is well known that some long-term 
connectivity property is required on the digraph sequence \cite{benezit2010weighted}.
Here, we use $T$-strong connectivity: for each $\ell\in \until{B}$, the time-varying digraphs
$\{\GG_\ell^t\}$ are $T$-strongly connected, i.e., the union 
digraph $\bigcup_{\tau=0}^{ T-1} \GG_\ell^{t+\tau}$ is strongly connected $\forall \, t \ge 0$.

The $T$-strong connectivity requirement imposes a condition on the way the 
blocks are selected.
Note that $\GG_\ell^t$ is a subgraph of $\GG$ such that
if agent $i$ selects (sends) block $\ell$ at time $t$, then 
the edges in $\EE$ leaving node $i$ are also present in $\EE_\ell^t$.
Hence, since $\GG$ is strongly connected (cf.~Assumption~\ref{ass:strong_conn}),
 the following general \emph{essentially cyclic} rule is enough 
to guarantee that each $\{\GG_\ell^t\}$ is $T$-strongly connected.\smallskip 

\begin{assumption}[Block Selection Rule]
\label{ass:block_selection}
	For each	agent $i\in\until{N}$ there exists a (finite) constant $T_i>0$ such that
	\begin{equation*} 
	  \bigcup_{\tau=0}^{T_i-1} \{\ell_i^{t+\tau} \} = \until{B}, \text{ for all } t \ge 0. 
	  \eqoprocend
  \end{equation*}
\end{assumption}
\smallskip 

Note  that the above rule does not impose any coordination among the agents; they  select their own block independently. Therefore,   at a given iteration $t$, 
different agents may update different blocks. Moreover, some blocks can be updated 
more often than others.
On the other hand, such a rule guarantees that, within a finite time window of length $T \le \max_{i\in\until{N}} T_i$,
all the blocks have been updated at least once by all agents. 
This is sufficient to ensure that $\GG_\ell^t$ is $T$-strongly connected, as 
formally stated next. 

\begin{proposition}
  Under Assumption~\ref{ass:strong_conn} and \ref{ass:block_selection}, there
  exits a $0<T\leq \max_{i\in\until{N}} T_i$, such that, for each
  $\ell\in \until{B}$, the union digraph
  $\bigcup_{\tau = 0}^{ T-1} \GG_\ell^{t+\tau}$ is strongly connected, for all
  $t\geq 0$.
\label{prop:induced_graph_connectivity}
\end{proposition}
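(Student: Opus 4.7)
The plan is to show that, within a sufficiently long time window, every edge of the static digraph $\GG$ appears in $\GG_\ell^{t+\tau}$ at least once, so the union contains $\GG$ and inherits its strong connectivity.

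Concretely, I would set $T \triangleq \max_{i\in\until{N}} T_i$, which satisfies $0 < T \le \max_{i\in\until{N}} T_i$ by construction. Fix $\ell\in\until{B}$ and $t\ge 0$. For every agent $j\in\until{N}$, Assumption~\ref{ass:block_selection} guarantees that $\bigcup_{\tau=0}^{T_j-1}\{\ell_j^{t+\tau}\} = \until{B}$, so in particular there exists some $\tau_j\in\{0,\dots,T_j-1\}\subseteq\{0,\dots,T-1\}$ with $\ell_j^{t+\tau_j} = \ell$.

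Next I would translate this block-selection event into graph-theoretic content. By the very definition of $\GG_\ell^{t}$, when agent $j$ selects block $\ell$ at iteration $t+\tau_j$, every out-neighbor $i$ of $j$ in $\GG$ has $j\in\innbrs_{i,\ell}^{t+\tau_j}$, so the edge $(j,i)$ belongs to $\EE_\ell^{t+\tau_j}$. Consequently, all edges of $\GG$ leaving $j$ appear in $\bigcup_{\tau=0}^{T-1}\EE_\ell^{t+\tau}$. Taking the union over $j\in\until{N}$, every edge of $\EE$ lies in the edge set of $\bigcup_{\tau=0}^{T-1}\GG_\ell^{t+\tau}$, i.e., $\GG$ is a subgraph of the union. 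Since $\GG$ is strongly connected (Assumption~\ref{ass:strong_conn}) and adding edges preserves strong connectivity, the union digraph is strongly connected. Because $t$ and $\ell$ were arbitrary, the claim follows.

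There is no serious obstacle here: the proof is essentially a bookkeeping argument that matches the definitions of $\GG_\ell^{t}$ and the essentially cyclic block-selection rule. The only subtlety is to notice that the relevant direction is \emph{outgoing}: a single selection event $\ell_j^{t+\tau_j}=\ell$ simultaneously activates \emph{all} out-edges of $j$ in $\EE_\ell^{t+\tau_j}$, which is why a window large enough to cover one selection per agent suffices to reconstruct the whole of $\GG$ inside the union.
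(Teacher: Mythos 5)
Your proof is correct and follows essentially the same route as the paper: each agent selects block $\ell$ at least once within the window, which activates all of its outgoing edges in the corresponding $\GG_\ell^{t+\tau}$, so the union contains $\GG$ and inherits its strong connectivity. If anything, your bookkeeping of indices (using the sender $j$'s selection time for the edge $(j,i)$) is slightly cleaner than the paper's wording, but the argument is the same.
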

\begin{proof}
Consider a block $\ell$ and define $t+s_i^t(\ell)$ as the last iteration 
in which agent $i$ sends block $\ell$ within the time window $[t,t+T-1]$. 
The essentially cyclic rule (cf.~Assumption~\ref{ass:block_selection}) 
implies that $0\le s_i^t(\ell) \leq T-1 $ for all $i\in \until{N}$. 
By definition of $\GG_\ell^t$, we have that any edge $(j,i) \in \EE$ also belongs 
to $G_\ell^{t+ s_i^t(\ell)}$. 
Since 
$\EE \subseteq \bigcup_{i=1}^N \GG_\ell^{t+ s_i^t(\ell)} \subseteq 
\bigcup_{\tau=0}^{T-1} \GG_\ell^{t+\tau}$, we have 
$\bigcup_{\tau = 0}^{ T-1} \GG_\ell^{t+\tau}$ is strongly connected because also 
$\GG$ is so (cf.~Assumption~\ref{ass:strong_conn}).
\end{proof}

\subsection{Block-wise Perturbed Push-sum}

We can now generalize the  \blockpushsum/ introducing in the agents' local 
updates a local block-wise, time-varying perturbation, denoted 
by $ \bepsilon_{(j,\ell)}^t$. 
The block-wise perturbed push-sum can be obtained by replacing the
update~\eqref{eq:block_consensus} with the following perturbed version 
\begin{align}
\begin{split}
	 \phi_{(i,\ell)}^{t+1}  & = \sum_{j\in \innbrs_{i,\ell}^t }
   a_{ij\ell}^t \,\phi_{(j,\ell)}^t,
   \\
   \bz_{(i,\ell)}^{t+1} 
   & = \sum_{j\in \innbrs_{i,\ell}^t } 
   \dfrac{a_{ij\ell}^t \, \phi_{(j,\ell)}^{t} }{\phi_{(i,\ell)}^{t+1}} 
   \, 
   \big( 
   \bz_{(j,\ell)}^t
   +
   \bepsilon_{(j,\ell)}^t
   \big),
\end{split}
\label{eq:perturbed_block_consensus}
\end{align}
for all $\ell \in \until{B}$, where each $\bepsilon_{(i,\ell)}^t \in \real^d$ is a 
suitable perturbation that each agent injects in its update. This scheme is a 
building block of proposed block-wise distributed optimization algorithm that 
will be introduced in the next section.
Convergence of the block-wise perturbed push-sum algorithm is stated in the 
following proposition.

\begin{proposition}
\label{prop:perturbed_push-sum_convergence}
  Consider the block-wise perturbed push-sum 
  consensus~\eqref{eq:perturbed_block_consensus}, with weight matrix $\bA_\ell^t$ defined
  according to~\eqref{eq:weights}. Then, under
  Assumptions~\ref{ass:strong_conn} and~\ref{ass:block_selection},
 there holds
  \begin{align*}
  & 
  \Big \| \bz_{(i,:)}^{t} - \dfrac{1}{N} \sum_{j=1}^N
  \bz_{(j,:)}^t \Big \|_1 
  =
  \sum_{\ell=1}^B
  \Big \| \bz_{(i,\ell)}^{t} - \dfrac{1}{N} \sum_{j=1}^N
  \bz_{(j,\ell)}^t \Big \|_1
  \\
  & \hspace{2.3cm}
  \le 
  c_1 (\rho)^t 
  +
  c_2 
  \sum_{\ell=1}^B
  \sum_{\tau =1}^{t}
  (\rho)^{t-\tau} 
  \sum_{j=1}^N 
  \| \bepsilon_{(j,\ell)}^\tau \|_1,
  \end{align*}
  with $\rho \in (0,1)$, for all $i\in\until{N}$. \oprocend
\end{proposition}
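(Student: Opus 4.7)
The plan is to exploit the fact that the update rule~\eqref{eq:perturbed_block_consensus} is block-decoupled: for every block index $\ell\in\until{B}$, the variables $\phi_{(i,\ell)}^t$ and $\bz_{(i,\ell)}^t$ evolve according to a standard perturbed push-sum recursion driven solely by the sequence of digraphs $\{\GG_\ell^t\}_{t\ge 0}$ and the associated column-stochastic weights $\bA_\ell^t$. Consequently, I would reduce the claim to a per-block bound of the form
\begin{align*}
\Big\|\bz_{(i,\ell)}^t - \frac{1}{N}\sum_{j=1}^N\bz_{(j,\ell)}^t\Big\|_1 \le c_{1,\ell}\,\rho_\ell^{\,t} + c_{2,\ell}\sum_{\tau=1}^t \rho_\ell^{\,t-\tau}\sum_{j=1}^N\|\bepsilon_{(j,\ell)}^\tau\|_1,
\end{align*}
and then sum over $\ell\in\until{B}$, taking $\rho\triangleq\max_\ell\rho_\ell\in(0,1)$ and absorbing block-dependent constants into $c_1$ and $c_2$.

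To derive the per-block bound, I would follow the well-developed template for perturbed push-sum over time-varying digraphs. First, I would verify that the hypotheses of the standard push-sum convergence result are in place on each block-induced graph sequence: (i) column stochasticity of $\bA_\ell^t$ and the uniform lower bound $\kappa>0$ on its nonzero entries follow from Assumption~\ref{ass:col_stoch} together with the specific weight choice~\eqref{eq:weights}; and (ii) the $T$-strong connectivity of $\{\GG_\ell^t\}_{t\ge 0}$ follows from Assumption~\ref{ass:block_selection} via Proposition~\ref{prop:induced_graph_connectivity}. Next, I would introduce the per-block transition matrix $\bPhi_\ell(t,\tau)\triangleq\bA_\ell^{t-1}\cdots\bA_\ell^\tau$ and recall the standard contraction estimate
\begin{align*}
\Big\|\bPhi_\ell(t,\tau)-\frac{1}{N}\1\bphi_\ell(t,\tau)^\top\Big\|_\infty \le C\,\rho_\ell^{\,t-\tau},
\end{align*}
where $\bphi_\ell(t,\tau)$ collects the effective left-eigenvector weights. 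Using this, one unrolls~\eqref{eq:perturbed_block_consensus} as a sum of a ``homogeneous'' term driven by $\bz_{(\cdot,\ell)}^0$ and an ``input'' term given by a convolution of the transition matrices with the perturbations $\bepsilon_{(j,\ell)}^\tau$; dividing by $\phi_{(i,\ell)}^{t+1}$ and using the uniform lower bound on the $\phi$-variables (which itself is a classical consequence of column stochasticity, the $\kappa$-lower bound, and $T$-strong connectivity) produces the desired geometric decay plus the convolution term in the perturbations.

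The main technical obstacle is establishing the contraction rate $\rho_\ell<1$ and the uniform positive lower bound on $\phi_{(i,\ell)}^t$ for the time-varying sequence $\{\GG_\ell^t\}$, because, unlike the classical push-sum, here the subgraph $\GG_\ell^t$ can be highly disconnected at individual times and only becomes strongly connected over windows of length $T$. I would handle this exactly as in \cite{nedic2015distributed}: concatenate $T$ consecutive column-stochastic matrices $\bA_\ell^{t+T-1}\cdots\bA_\ell^t$, observe that the product has all entries bounded below by $\kappa^T>0$ on the union graph (which is strongly connected by Proposition~\ref{prop:induced_graph_connectivity}), and invoke a standard coefficient-of-ergodicity argument to obtain a contraction factor $\rho_\ell\in(0,1)$ independent of $t$. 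The remaining arguments are routine telescoping and triangle-inequality manipulations, and the $\ell_1$-norm structure of the bound is preserved throughout, allowing the final summation over blocks to yield the stated inequality.
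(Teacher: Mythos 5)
Your proposal is correct and follows essentially the same route as the paper: the paper also reduces the claim block-by-block to the perturbed push-sum result of \cite[Lemma~1]{nedic2015distributed} (restated as Lemma~\ref{lem:perturbed_pushsum}), with the hypotheses on each block-induced sequence $\{\GG_\ell^t\}$ verified through the weight construction~\eqref{eq:weights} and the $T$-strong connectivity from Proposition~\ref{prop:induced_graph_connectivity}, and then sums over $\ell$. Your sketch of the ergodicity/contraction argument and the uniform lower bound on the $\phi$-variables is just the internal machinery of that cited lemma, so no gap.
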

The proof of the proposition can be obtained by the proof 
of \cite[Lemma~1]{nedic2015distributed}, which we report in 
Appendix as Lemma~\ref{lem:perturbed_pushsum}, 
in vector form, as a preliminary result needed for our analysis.
As a corollary (with no proof), the previous result states that if the
perturbations $\bepsilon_{(i,\ell)}^t$ are vanishing, i.e,
$\lim_{t\to\infty} \|\bepsilon_{(i,\ell)}^t\| = 0$, for all $\ell \in \until{B}$
and $i\in \until{N}$, it holds
$\lim_{t\to\infty} \Big \| \bz_{(i,:)}^{t} - \frac{1}{N} \sum_{j=1}^N
\bz_{(j,:)}^t \Big \| = 0$,
for all $i\in\until{N}$. Clearly, for $\bepsilon_{(i,\ell)}^t = \0$ for all
$t\ge 0$, $\ell\in \until{B}$ and $i\in\until{N}$, the block-wise perturbed
push-sum reduces to the \blockpushsum/.

Several tasks can be accomplished by suitably choosing the perturbation
$\bepsilon_{(i,\ell)}^t$ in~\eqref{eq:perturbed_block_consensus}.
As a case study, in the following we show how to choose the perturbation,
in a block-wise fashion, in order to track the average of time-varying signals 
over graphs. 
The resulting block-wise tracking scheme will be part of the proposed distributed 
optimization algorithm.
\smallskip 

\noindent\textbf{Block-wise average signal tracking.} 
Consider the problem of tracking the average of $N$ time-varying signals over a
graph $\GG$, \cite{zhu2010discrete,kia2018tutorial}.
Specifically, assume each agent $i$ can generate (or evaluate) a time-varying 
signal, say 
$\{\bu_i^t\}_{t\in \natural}$, with each $\bu_i^t\in \real^{dB}$,
and aims at tracking the average signal $\bar{\bu}^t \triangleq (1/N) \cdot \sum_{i=1}^N \bu_i^t$ 
by exchanging information over the network.
Existing tracking schemes, e.g. ones used in distributed optimization algorithms~
\cite{dilorenzo2015distributed,xu2015augmented,varagnolo2016newton,
dilorenzo2016next,qu2017harnessing,qu2016cdc,sun2016distributed,xin2018linear,
xi2018addopt,
sun2017distributed,nedic2016cdc,nedic2017achieving,xu2018convergence},
require the acquisition and communication at each iteration of the entire signal
$\bu_i^t$, which might be too costly in a big-data setting. To cope with the curse of 
dimensionality, we can leverage the block-wise perturbed push-sum consensus
algorithm: to track distributedly $\bar{\bu}^t$, one can show that
it is sufficient to set $\bepsilon_{(i,\ell)}^t$ in~\eqref{eq:perturbed_block_consensus} to
\begin{align}
\label{eq:perturbation_for_tracking}
  \bepsilon_{(i,\ell)}^t =
  \frac{1}{\phi_{(i,\ell)}^t}  \Big ( \bu_{i,\ell}^{t+1} - \bu_{i,\ell}^t \Big ),
\end{align}
where $\bu_{i,\ell}^t$ denotes the $\ell$-th block of $\bu_{i}^t$.

While the tracking scheme \eqref{eq:perturbed_block_consensus}--\eqref{eq:perturbation_for_tracking} 
unlocks block-communications over networks, it requires, at each iteration, to potentially
perform~\eqref{eq:perturbation_for_tracking} for all the blocks $\ell\in\until{B}$, i.e. the 
evaluation (acquisition) of the \emph{entire} signal $\bu_i^t$.
When the cost of acquiring  $\bu_i^t$ is non-negligible, e.g., 
$\bu_i^t$ can be the gradient of a function with respect to a large number of 
variables, it is advisable to modify the protocol so that, at each iteration, only \emph{one} 
block of $\bu_i^t$ is used.   
To this end, we propose to replace $\bu_i^t$ with a surrogate local variable, 
denoted by $\hbu_i^t$, initialized as $\hbu_i^0 = \bu_i^0$. At each iteration $t$, 
agent $i$ acquires only a block of $\bu_i^{t}$, say the $\ell_i^t$-th block, 
and updates $\hbu_i^t$ as
\begin{align*}
  \hbu_{i,\ell}^{t} = 
  \begin{cases}
    \bu_{i,\ell}^{t}, & \text{if } \ell = \ell_i^t,
    \\[1ex]
    \hbu_{i,\ell}^{t-1}, & \text{if } \ell \neq \ell_i^t,
  \end{cases}
\end{align*}
where, as in~\eqref{eq:perturbation_for_tracking}, $\hbu_{i,\ell}^t$ denotes the $\ell$-th 
block of $\hbu_{i}^t$.
That is, vector $\hbu_i^t$ collects agent $i$'s most recent information on $\bu_i^t$. 
The modified block-tracking scheme then reads  
\begin{align*}
\begin{split}
	 \phi_{(i,\ell)}^{t+1}  & = \sum_{j\in \innbrs_{i,\ell}^t }
   a_{ij\ell}^t \,\phi_{(j,\ell)}^t,
   \\
   \bz_{(i,\ell)}^{t+1} 
   & = \sum_{j\in \innbrs_{i,\ell}^t } 
   \dfrac{a_{ij\ell}^t}{\phi_{(i,\ell)}^{t+1}} 
   \, 
   \Big( 
   \phi_{(j,\ell)}^{t} \bz_{(j,\ell)}^t
   +
   (\hbu_{j,\ell}^{t+1} - \hbu_{j,\ell}^t)
   \Big).
\end{split}
\end{align*}

\section{\algname/ Distributed Algorithm}
\label{sec:algorithm}

In this section we introduce our distributed big-data optimization algorithm
(cf.~Section~\ref{sec:algorithm_description}) along with its convergence
properties (cf.~Section~\ref{sec:algorithm_convergence}). Some extensions of the
basic scheme are discussed in Section~\ref{sec:extensions}.

\subsection{Algorithm Description}
\label{sec:algorithm_description}

The proposed distributed algorithm takes inspiration from two existing optimization
algorithms, namely: NEXT (in-Network succEssive conveX approximaTion)
\cite{dilorenzo2015distributed,dilorenzo2016next} and SONATA (distributed Successive cONvex Approximation
algorithm over Time-varying digrAphs) \cite{sun2016distributed,sun2017distributed}.
These algorithms combine successive convex approximation techniques with a
distributed gradient tracking mechanism to solve convex and nonconvex
optimization problems over time-varying (di)graphs. Specifically, they consist
of a two-step procedure in which each agent: (i) solves a local strongly convex
approximation of the target optimization problem, and (ii) runs a twofold
averaging scheme to reach consensus among the local solution estimates and to
``track'' the average of the gradient of agents' cost functions (the smooth
part).
As all the other existing schemes, they are not designed to solve big-data
optimization problems over networks: they require that, at every iteration,
agents solve a huge-scale optimization problem and communicate their entire
solution estimate to neighbors.

We propose a distributed algorithm, named \algname/, based on a block-wise 
execution of steps (i) and (ii) above. It copes with big-data optimization problems
by unlocking for the first time block-wise optimization and communications.
While the intuitive idea behind this block extension might look simple, we will
show that the convergence analysis of \algname/ is quite challenging. Indeed it
calls for new techniques to deal with local inexact (block-wise) optimization
and communications, the latter inducing block-dependent time-varying digraphs in
the consensus updates.
\algname/ reads as follows.  Each agent maintains a local solution estimate
$\bx_{(i,:)}^t \in\real^{dB}$ of problem \eqref{eq:problem}, with the same block
structure as the optimization variable $\bx$, with
$\bx_{(i,\ell)}^t \in\real^{d}$ being its $\ell$-th block-component. All these
estimates are iteratively updated with the goal of being asymptotically
consensual to a stationary point of problem \eqref{eq:problem}.
Agents also update a local auxiliary variable $\by_{(i,:)}^t \in\real^{dB}$ that
is meant to track $\frac{1}{N}\sum_{j=1}^N \nabla f_j (\bx_{(j,:)}^t) $ (which
is not known locally by the agents), i.e., to get, for any agent $i$,
$\lim_{t\to\infty} \| \by_{(i,:)}^t - \frac{1}{N}\sum_{j=1}^N \nabla f_j
(\bx_{(j,:)}^t) \|=0$.
The update of the $\bx$- and $\by$-variables is described next.

\noindent\textbf{Block-wise local optimization step.}
At  iteration $t$, every agent $i$ selects a block $\ell_i^t \in \until{B}$
according to an essentially cyclic rule satisfying
Assumption~\ref{ass:block_selection}.
As for the optimization step, agent $i$ computes a descent direction
with respect to the selected block (only) by solving a strongly convex approximation of
problem~\eqref{eq:problem} (based on its current solution and gradient
estimates, respectively $\bx_{(i,:)}^t$ and $\by_{(i,:)}^t$).
Specifically, it solves
\begin{align*}
\tildex_{(i,\ell_i^t)}^t & = \argmin_{ \bx_{\ell_i^t} \in \KK_{\ell_i^t} } \:
\surr_{i,\ell_i^t} \big( \bx_{\ell_i^t}; \bx_{(i,:)}^t, \by_{(i,\ell_i^t)}^{t}
\big) + r_{\ell_i^t} ( \bx_{\ell_i^t} ),
\end{align*}
with
\begin{align*}
\begin{split}
  \surr_{i,\ell} \big( \bx_{\ell}; \bx_{(i,:)}^t, \by_{(i,\ell)}^{t} \big) 
  & =
  \tf_i (\bx_\ell; \bx_{(i,:)}^t )
  \\
  & +
  (N \by_{(i,\ell)}^t \! -\! \nabla_\ell f_i ( \bx_{(i)}^t)
  )^{\!\top\!} (\bx_\ell \!-\! \bx_{(i,\ell)}^t ),
\end{split}
\end{align*}
where $\tf_i (\bx_\ell; \bx_{(i,:)}^t )$ is a strongly convex approximation of
$f_i$ satisfying the following assumption.
\begin{assumption}[On the surrogate functions]
Given problem~\eqref{eq:problem} under Assumption~\ref{ass:cost_functions}, 
each surrogate function 
$\tf_{i,\ell}: \KK_\ell\times \KK\rightarrow \real$ is chosen so that %
\begin{enumerate}
\item 
  $\tf_{i,\ell} (\bullet;\bx)$ is uniformly strongly convex with constant $\tau_i >0$
  on $\KK_\ell$;
\item
  $\nabla \tf_{i,\ell} (\bx_{\ell};\bx) = \nabla_\ell f_i (\bx)$, for all $\bx \in \KK$;
\item
  $\nabla \tf_{i,\ell} (\bx_{\ell}; \bullet)$ is uniformly Lipschitz continuous on $\KK$;
\end{enumerate}
where $\nabla \tf_{i,\ell}$ denotes the partial gradient of $\tf_{i,\ell}$ 
with respect to its first argument.~\oprocend
\label{ass:surrogate}
\end{assumption}
 
Several choices for $\tf_i$ are possible; we refer the interested reader to
\cite{facchinei2015parallel,dilorenzo2016next,sun2017distributed,notarnicola2017cdc,notarnicola2017camsap}
for more details and examples.
We point out that each strongly convex function
$\surr_{i,\ell} \big( \bx_{\ell}; \bx_{(i,:)}^t, \by_{(i,\ell)}^{t} \big)$
satisfies $\nabla \surr_{i,\ell} \big( \bx_{(i,\ell)}^t; \bx_{(i,:)}^t, \by_{(i,\ell)}^{t}
\big) = N\by_{i,\ell}^t$,
thus it asymptotically encodes first order information of $\sum_i f_i$, namely
$\sum_{i=1}^N \nabla f_i (\bx_{(i,:)}^t)$.
As a clarifying example, one can
consider the simplest first order approximation of $f_i$
given by its linearization about the 
current iterate $\bx_{(i,:)}^t$,
\begin{align*}
  \surr_{i,\ell} \big( \bx_{\ell}; \bx_{(i,:)}^t, \by_{(i,\ell)}^{t} \big) 
  \!\!=\!
  (\! N \by_{(i,\ell)}^t )^{\!\top \!} (\bx_{\ell} \!-\! \bx_{(i,\ell)}^t) 
  \!+\!
  \tau_i \| \bx_{\ell} \!-\! \bx_{(i,\ell)}^t \|^2\!\!.
\end{align*}

Given $\tildex_{(i,\ell_i^t)}^t $, agent $i$ computes and broadcasts to
its neighbors the feasible point
$\bx_{(i,\ell_i^t)}^t + \gamma^t \Deltax_{(i,\ell_i^t)}^t$, with
$\Deltax_{(i,\ell_i^t)}^t = \tildex_{(i,\ell_i^t)}^t - \bx_{(i,\ell_i^t)}^t$
being a local feasible descent direction and $\gamma^t$ a step-size.

We want to stress that agent $i$ does not optimize, and thus does not
communicate, the other blocks with indexes $\ell \neq \ell_i^t$. For the sake of
analysis, we set $\Deltax_{(i,\ell)}^t = \0$ for the non-updated blocks.

\noindent\textbf{Block-wise averaging and gradient tracking step.}
As for the consensus steps, agent $i$ collects all the updated blocks from its
neighbors and runs two instances of the block-wise perturbed push-sum consensus
scheme, described in Section~\ref{sec:block_consensus}
(Cf. eq.~\eqref{eq:perturbed_block_consensus}). The first one is meant to make
the local solution estimates, $\bx_{(i,:)}^t$, consensual toward their average;
the second, involving a local gradient estimate $\by_{(i,:)}^t$, serves as a
tracking scheme for the gradient signal
$\sum_{i=1}^N \nabla f_i (\bx_{(i,:)}^t)$.

The \algname/ distributed algorithm is summarized (from the perspective of node
$i$) in the next table. 
\begin{algorithm}[!ht]
  \begin{algorithmic}
    \smallskip
    \StatexIndent[0] \textbf{Initialization}: 
      $\bx_{(i,:)}^0 \in \KK$ arbitrary and $\by_{(i,:)}^0 = \nabla f_i(\bx_{(i,:)}^0)$\vskip 0.1cm

    \StatexIndent[0] \textbf{Local Optimization}: \vskip 0.1cm

    \StatexIndent[0.25] select $\ell_i^t \in \until{B}$ and compute
      \begin{align}
        \begin{split}
          \tildex_{(i,\ell_i^t)}^t 
          & =
          \argmin_{ \bx_{\ell_i^t} \in \KK_{\ell_i^t} } \:
          \surr_{i,\ell_i^t} \big( \bx_{\ell_i^t}; \bx_{(i,:)}^t, \by_{(i,\ell_i^t)}^{t} \big)
          +
          r_{\ell_i^t} ( \bx_{\ell_i^t} )
        \end{split}
        \label{eq:alg_local_min}
        \\
        \Deltax_{(i,\ell)}^{t} & = 
        \begin{cases}
        \tildex_{(i,\ell_i^t)}^t - \bx_{(i,\ell_i^t)}^t, & \text{ if } \ell = \ell_i^t,
        \\
        \0, & \text{ otherwise}.
        \end{cases}
        \label{eq:alg_descent_dir}
      \end{align}

    \smallskip
    
    \StatexIndent[0] \textbf{Averaging and Gradient Tracking}: \vskip 0.1cm

    \StatexIndent[0.2]
    For each $j \in \innbrs_i$ receive $\phi_{(j,\ell_j^t)}^{t}$ and 
    $\bx_{(j,\ell_j^t)}^{t} + \gamma^t \Deltax_{(j,\ell_j^t)}^t$.
    \StatexIndent[0.2] For each $\ell \in \until{B}$ compute
    \begin{align}
      \phi_{(i,\ell)}^{t+1} 
      & = \sum_{j\in \innbrs_{i,\ell}^t } a_{ij\ell}^t \, \phi_{(j,\ell)}^{t}
      \label{eq:alg_weights}
      \\
      \bx_{(i,\ell)}^{t+1} & = 
      \sum_{j\in \innbrs_{i,\ell}^t } 
      \frac{a_{ij\ell}^t \,\phi_{(j,\ell)}^{t} }{\phi_{(i,\ell)}^{t+1}}
      \Big(
        \bx_{(j,\ell)}^{t} 
        +
        \gamma^t \Deltax_{(j,\ell)}^t
      \Big)
    \label{eq:alg_x_consenus}
    \end{align}

    \StatexIndent[0.2] For each $j \in \innbrs_i$ receive
    $\big(\phi_{(j,\ell_j^t)}^{t} \by_{(j,\ell_j^t)}^t + \nabla_{\ell_j^t} f_j
    \big( \bx_{(j,:)}^{t+1} \big) - \nabla_{\ell_j^t} f_j \big(
    \bx_{(j,:)}^{t} \big) \big)$
    \StatexIndent[0.2] For each $\ell \in \until{B}$ compute
    \begin{align}
      \!\!\! \by_{(i,\ell)}^{t+1}  \!
      & = \!\!
      \sum_{j \in \innbrs_{i,\ell}^t }\!\!\!
      \frac{a_{ij\ell}^t}{\phi_{(i,\ell)}^{t+1}}
      \Big( 
        \phi_{(j,\ell)}^{t}
        \by_{(j,\ell)}^t 
        \!+\!
        \nabla_\ell f_j \big( \bx_{(j,:)}^{t+1} \big)
        \!-\!
        \nabla_\ell f_j \big( \bx_{(j,:)}^{t} \big) \!
      \Big).
    \label{eq:alg_y_consensus}
    \end{align}
        
  \end{algorithmic}
  \caption{\algname/}
  \label{alg:algorithm}
\end{algorithm}

\begin{remark}
  We would like to stress that agents send \emph{only one} block per iteration. 
  That is, the for-loop over $\ell$ consists of at most $|\nbrs_i|$ non-trivial 
  consensus steps. Thus, each agent $i$ receives 
  exactly $|\nbrs_i| (2d+1)$ updated quantities. Moreover,
  due to the presence of the weights $a_{ij\ell}^t$, each non-trivial consensus step
  requires to sum at most $| \nbrs_i |$ terms over all the blocks.~\oprocend
\end{remark}

\subsection{Algorithm Convergence}
\label{sec:algorithm_convergence}

We now provide the main convergence result of \algname/. 
We first introduce the following assumption on the step-size sequence
$\{\gamma^t\}_{t\ge0}$ [cf. \eqref{eq:alg_x_consenus}].

\begin{assumption}[On the step-size]
  The sequence $\{\gamma^t\}_{t \ge 0}$, with each $0< {\gamma^t} \le 1$, satisfies:
  \begin{itemize}
    \item[(i)]
$\gamma^{t+1} \le \gamma^t$, for all $t\geq 0$;
   \smallskip 
    \item[(ii)]
    $\sum\limits_{t=0}^{\infty}\gamma^t = \infty$ and 
    $\sum\limits_{t=0}^{\infty} (\gamma^t)^2 < \infty$.
  \oprocend
  \end{itemize}
\label{ass:step-size}
\end{assumption}

The above conditions are standard and satisfied by most practical diminishing step-size rules. 
For example, the following rule, proposed in~\cite{facchinei2015parallel}, satisfies 
Assumption~\ref{ass:step-size} and has been found very effective in our 
experiments: $\gamma^{t+1} = \gamma^t ( 1 - \mu \gamma^t )$, 
with $\gamma^0\in ( 0,1 ]$ and  $\mu\in (0,1/\gamma^0)$.

We are now in the position to state the main convergence result, as given below.

\begin{theorem}
  Let  $\{(\bx_{(i,:)}^t)_{i=1}^N\}_{t\ge0}$ %
  be the sequences generated by \algname/ and consider their weighted average 
  \begin{align*}
    \avgdec^t = \frac{1}{N} \Big ( \sum_{i=1}^N \phi_{(i,\ell)}^t \bx_{(i,\ell)}^t \Big )_{\ell=1}^B.
  \end{align*}
  
  Suppose that Assumptions~\ref{ass:cost_functions}, \ref{ass:strong_conn},
  \ref{ass:col_stoch}, \ref{ass:block_selection}, \ref{ass:surrogate} and
  \ref{ass:step-size} are satisfied; 
  then the following statements hold true:
  \begin{itemize}[leftmargin=0.65cm]
    \item[(i)] 
    \texttt{consensus}: $\| \bx_{(i,:)}^t - \avgdec^t \| \to 0$ as $t\to \infty$, for all $i\in\until{N}$;

    \item[(ii)]
    \texttt{convergence}: $\{\avgdec^t\}_{t\ge0}$ is bounded and every of its limit points  
    is a stationary solution of problem~\eqref{eq:problem}. %
\end{itemize}
  \label{thm:convergence}
\end{theorem}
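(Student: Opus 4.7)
The plan is to establish Theorem~\ref{thm:convergence} in four conceptual steps: (a) asymptotic consensus of the $\bx$-iterates via the block-wise perturbed push-sum; (b) asymptotic gradient tracking by the $\by$-variables via the same tool; (c) a Lyapunov-type descent on $U(\avgdec^t)$ yielding boundedness and summability of the local (selected-block) moves; and (d) passage to the limit in the optimality conditions, where the essentially cyclic rule does the decisive work of lifting per-block to all-block stationarity.

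\textbf{Consensus and tracking.} The $\bx$-update \eqref{eq:alg_x_consenus} is precisely the block-wise perturbed push-sum \eqref{eq:perturbed_block_consensus} with perturbation $\bepsilon_{(j,\ell)}^t=\gamma^t\Deltax_{(j,\ell)}^t$. Strong convexity of $\surr_{i,\ell}$ (Assumption~\ref{ass:surrogate}(i)), combined with bounded $\nabla f_i$ and bounded subgradients of $\reg_\ell$ on $\KK$ (Assumptions~\ref{ass:cost_functions}(iii)--(iv)), gives $\sup_t\|\Deltax_{(i,\ell)}^t\|<\infty$; since $\gamma^t\to 0$ the perturbation vanishes and Proposition~\ref{prop:perturbed_push-sum_convergence} yields part (i). An analogous argument applied to \eqref{eq:alg_y_consensus}, with perturbation $\nabla_\ell f_j(\bx_{(j,:)}^{t+1})-\nabla_\ell f_j(\bx_{(j,:)}^{t})$, produces a vanishing bound $O(\gamma^t)$ by Lipschitz continuity of $\nabla f_j$, and combined with square-summability of $\gamma^t$ (Assumption~\ref{ass:step-size}(ii)) gives the tracking property $\big\|N\by_{(i,:)}^t-\sum_{j=1}^N\nabla f_j(\bx_{(j,:)}^t)\big\|\to 0$.

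\textbf{Descent on a merit function.} Column stochasticity of each $\bA_\ell^t$ implies $\avgdec^{t+1}_\ell=\avgdec^t_\ell+(\gamma^t/N)\sum_j\phi_{(j,\ell)}^t\Deltax_{(j,\ell)}^t$. Applying the descent lemma to the smooth part $\sum_i f_i$ (Assumption~\ref{ass:cost_functions}(iii)) together with convexity of each $\reg_\ell$ produces
\begin{align*}
U(\avgdec^{t+1})\le U(\avgdec^t)+\gamma^t\sum_{i=1}^N\big(N\by_{(i,\ell_i^t)}^t\big)^{\!\top\!}\Deltax_{(i,\ell_i^t)}^t+c_1(\gamma^t)^2+\gamma^t\,\eta^t,
\end{align*}
where $\eta^t\to 0$ collects the consensus and tracking errors from the previous step. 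The variational-inequality characterization of $\tildex_{(i,\ell_i^t)}^t$ from \eqref{eq:alg_local_min} together with $\tau_i$-strong convexity of $\surr_{i,\ell}$ converts the inner-product term into $-c_2\gamma^t\sum_i\|\Deltax_{(i,\ell_i^t)}^t\|^2$ (again modulo $\gamma^t\eta^t$ contributions). Telescoping and invoking coercivity (Assumption~\ref{ass:cost_functions}(v)) plus Assumption~\ref{ass:step-size}(ii) yield boundedness of $\{\avgdec^t\}$ and $\sum_{t}\gamma^t\sum_{i}\|\Deltax_{(i,\ell_i^t)}^t\|^2<\infty$, whence $\liminf_t\|\Deltax_{(i,\ell_i^t)}^t\|=0$.

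\textbf{Stationarity of limit points, and the main obstacle.} This is the step where the block-wise execution must be handled with care: the descent inequality controls only the \emph{selected} block per iteration. The essentially cyclic rule (Assumption~\ref{ass:block_selection}) guarantees that within every window of length $T_i$ each block of agent $i$ is refreshed at least once, and uniform continuity of the solution map $(\bx,\by)\mapsto\tildex_{(i,\ell)}$ (consequence of Assumption~\ref{ass:surrogate}) lets one propagate the vanishing of $\|\Deltax_{(i,\ell_i^t)}^t\|$ on a subsequence to \emph{all} blocks $\ell\in\until{B}$. Passing to the limit in the optimality condition for \eqref{eq:alg_local_min}, using asymptotic consensus $\bx_{(i,:)}^t\to\bar{\bx}^\infty$ and tracking $N\by_{(i,\ell)}^t\to\sum_j\nabla_\ell f_j(\bar{\bx}^\infty)$, then recovers the first-order stationarity condition of problem~\eqref{eq:problem} at every limit point of $\{\avgdec^t\}$. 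The genuinely new technical difficulty, compared to NEXT/SONATA where all blocks are updated simultaneously, is this partial-to-full lifting: one must simultaneously (a) invoke the essentially cyclic window to cover all blocks, (b) exploit uniform continuity of the surrogate's argmin to transfer vanishing moves across blocks, and (c) absorb the cross-block coupling entering $\surr_{i,\ell}$ through $\bx_{(i,:)}^t$ into the error term $\eta^t$ without breaking the telescoping.
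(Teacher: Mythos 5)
Your overall architecture matches the paper's (block-wise perturbed push-sum for consensus and tracking, a descent argument, then stationarity via the essentially cyclic rule), but two steps as written have genuine gaps.

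First, the merit function. You telescope $U(\avgdec^t)$ and claim the nonsmooth part contributes only $\gamma^t\eta^t$ plus $(\gamma^t)^2$ errors. This does not go through: the sufficient-descent inequality from \eqref{eq:alg_local_min} (Lemma~\ref{lem:gradient_related_condition}) controls $\reg_\ell(\tildex_{(i,\ell)}^t)-\reg_\ell(\bx_{(i,\ell)}^t)$ at the \emph{local} iterates, while convexity of $\reg_\ell$ applied to $\avgdec_\ell^{t+1}=\frac{1}{N}\sum_j\phi_{(j,\ell)}^t\big(\bx_{(j,\ell)}^t+\gamma^t\Deltax_{(j,\ell)}^t\big)$ only yields $\reg_\ell(\avgdec_\ell^{t+1})\le\frac{1}{N}\sum_j\phi_{(j,\ell)}^t\reg_\ell(\bx_{(j,\ell)}^t)+\gamma^t(\cdots)$, and Jensen gives $\frac{1}{N}\sum_j\phi_{(j,\ell)}^t\reg_\ell(\bx_{(j,\ell)}^t)\ge\reg_\ell(\avgdec_\ell^t)$, i.e.\ the comparison with $\reg_\ell(\avgdec_\ell^t)$ goes the wrong way. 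The residual $\frac{1}{N}\sum_j\phi_{(j,\ell)}^t\reg_\ell(\bx_{(j,\ell)}^t)-\reg_\ell(\avgdec_\ell^t)$ is bounded only by the consensus error, which is $\gamma$-weighted and square summable but \emph{not} summable, and it is not multiplied by $\gamma^t$ in your telescoping, so it cannot be absorbed into a summable $P^t$. The paper sidesteps this precisely by choosing the Lyapunov function $V^t=\sum_i f_i(\avgdec^{t+1})+\sum_\ell\sum_i\phi_{(i,\ell)}^t\reg_\ell(\bx_{(i,\ell)}^t)$, i.e.\ keeping the $\phi$-weighted local regularizer values in the merit function (Lemma~\ref{prop:descent_on_reg} and Proposition~\ref{prop:cost_descent}); you would need this (or an equivalent fix) for your descent inequality to hold.

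Second, the stationarity step. From $\sum_t\gamma^t\sum_{i,\ell}\|\Deltax_{(i,\ell)}^t\|^2<\infty$ you only obtain $\liminf_t$ of the windowed residual equal to zero; your ``propagate vanishing on a subsequence to all blocks'' argument then gives stationarity of limit points along that subsequence, which proves that \emph{some} limit point of $\{\avgdec^t\}$ is stationary, not the theorem's claim that \emph{every} limit point is. The missing ingredient is the upgrade from $\liminf$ to $\lim$ of $\bDelta^t$ (Step~1(b) in the paper): a contradiction argument exploiting the Lipschitz continuity of the exact best-response map $\hatx_{(i,\ell)}(\cdot)$ of \eqref{eq:xhat_definition} together with the asymptotic consistency $\|\hatx_{(i,\ell_i^t)}(\bx_{(i,:)}^t)-\tildex_{(i,\ell_i^t)}^t\|\to0$ (Lemma~\ref{lem:best_response_consistency}), combined with the window bookkeeping through the last-update times $s_i^t(\ell)$. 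Your sketch correctly identifies the cyclic window and the continuity of the argmin map as ingredients, but without this $\limsup$ argument the conclusion ``every limit point is stationary'' is not reached.
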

\begin{proof}
See the Appendix.
\end{proof}

Theorem~\ref{thm:convergence} states two results. 
First, a consensus is asymptotically achieved among the local 
estimates $\bx_{(i,:)}^t$ over all the blocks.
Second, the weighted average estimate $\avgdec^t$ converges to the set $\cal S$ of stationary solutions of 
problem~\eqref{eq:problem}. 
Therefore, the sequence $\{(\bx_{(i,:)}^t)_{i=1}^N\}_{t\ge 0}$ converges 
to the set $\{\1_N \kron \bx^\ast \, : \bx^\ast \in \cal S\}$. 

\begin{remark}[Convex Problems]
  If $U$ in~\eqref{eq:problem} is convex, \algname/ converges (in the
  aforementioned sense) to the set of global optimal solutions of the convex
  problem.\oprocend
\end{remark}

\subsection{Alternative Formulations and Generalizations}
\label{sec:extensions}
In this subsection, we discuss some extensions and generalizations of the basic \algname/.
First, we start by describing a special instance for an unconstrained version of
problem~\eqref{eq:problem} with all $\reg_\ell = 0$.
If one chooses the simplest surrogate in~\eqref{eq:alg_x_update}, namely the 
linearization of $f_i$ about the current iterate, then \algname/ reads
\begin{align*}
  \phi_{(i,\ell)}^{t+1} 
  & = 
  \sum_{j\in \innbrs_{i,\ell}^t } a_{ij\ell}^t \, \phi_{(j,\ell)}^{t}
  \\
  \bx_{(i,\ell)}^{t+1} 
  & = 
  \sum_{j\in \innbrs_{i,\ell}^t } 
      \frac{a_{ij\ell}^t \,\phi_{(j,\ell)}^{t} }{\phi_{(i,\ell)}^{t+1}}
      \Big(
        \bx_{(j,\ell)}^{t} 
        -
        \gamma^t \by_{(j,\ell)}^t
      \Big)
  \\
  \by_{(i,\ell)}^{t+1} 
  & = 
  \sum_{j \in \innbrs_{i,\ell}^t} \frac{a_{ij\ell}^t}{\phi_{(i,\ell)}^{t+1}} 
  \Big( \!\phi_{(j,\ell)}^{t} \, \by_{(j)}^{t}
  \!\!+\! \nabla_\ell f_j (\bx_{(j,:)}^{t+1}) \!-\! \nabla_\ell f_j (\bx_{(j,:)}^{t}) \!\Big),
\end{align*}
which is a block-wise implementation of existing distributed algorithms
based on a gradient tracking scheme as, 
e.g.,~\cite{dilorenzo2016next,nedic2017achieving,qu2017harnessing,
sun2017distributed,xi2018addopt,xu2018convergence,xin2018linear}.

\noindent\textbf{Combine-Then-Adapt Averaging.}
The block-wise consensus and tracking updates as in~\eqref{eq:alg_x_consenus}
and~\eqref{eq:alg_y_consensus} are performed in the so-called Adapt-Then-Combine
(ATC) fashion. We remark that they can be also performed adopting the other scheme 
used in the literature, namely the so-called Combine-Then-Adapt (CTA) way 
\cite{sayed2014adaptation}.
The CTA form of the averaging and gradient tracking step 
of \algname/ reads
\begin{align*}
  \bx_{(i,\ell)}^{t+1} 
  & = 
  \sum_{j\in \innbrs_{i,\ell}^t } \!\!
  \frac{a_{ij\ell}^t\phi_{(j,\ell)}^{t}}{\phi_{(j,\ell)}^{t+1}}\, \bx_{(j,\ell)}^{t} 
  +
  \gamma^t \phi_{(i,\ell)}^{t} \Deltax_{(i,\ell)}^t
  \\
  \by_{(i,\ell)}^{t+1}
  & = 
  \sum_{j \in \innbrs_{i,\ell}^t } \!\!\!
  \frac{a_{ij\ell}^t \, \phi_{(j,\ell)}^t }{\phi_{(i,\ell)}^{t+1}} 
  \by_{(j,\ell)}^t 
  +\frac{
  \nabla_\ell f_i \big( \bx_{(i,:)}^{t+1} \big)
  \!-\!
  \nabla_\ell f_i \big( \bx_{(i,:)}^{t} \big) 
  }{\phi_{(i,\ell)}^{t+1}}.
\end{align*}
One can show that Theorem~\ref{thm:convergence} also applies to the CTA form of \algname/, which 
thus converges under the same condition of its ATC counterpart.

\noindent\textbf{Block-Wise Gradient Computation.}
In order to perform~\eqref{eq:alg_y_consensus}
(and also its CTA counterpart), agent $i$ needs to compute the entire gradient
$\nabla f_i(\bx_{(i,:)}^{t+1})$ [recall from \eqref{eq:weights} that
$a_{ii\ell}>0$, for all $\ell$]. This potential drawback can be overcome  considering 
a slightly different version of \algname/ in which $\nabla_\ell f_i$ is
replaced by an auxiliary variable $\hbg_{(i,\ell)}$, which is iteratively
updated as 
\begin{align*}
  \hbg_{(i,\ell)}^{t+1}
  & =
  \begin{cases}
    \nabla_{\ell_i^t} f_i \big( \bx_{(i,:)}^{t+1} \big), \hspace{0.3cm} & \text{if } \ell = \ell_i^t,
    \\
    \hbg_{(i,\ell)}^{t}, & \text{otherwise.}
  \end{cases}
\end{align*}
Thus, step~\eqref{eq:alg_y_consensus} must be replaced by
\begin{align*}
  \by_{(i,\ell)}^{t+1}
  & = 
  \sum_{j \in \innbrs_{i,\ell}^t } \!\!
  \frac{a_{ij\ell}^t \phi_{(j,\ell)}^t}{\phi_{(i,\ell)}^{t+1}} \by_{(j,\ell)}^t 
  +
  \frac{
  \hbg_{(i,\ell)}^{t+1}
  -
  \hbg_{(i,\ell)}^{t}}{\phi_{(i,\ell)}^{t+1}}.
\end{align*}

\begin{remark}
  The auxiliary mechanism of $\hbg$ imposes that, at each iteration $t$ of
  \algname/, each agent $i$ computes two components of the same gradient,
  $\nabla f_i (\bx_{(i,:)}^t)$, rather than one. This twofold computation can be
  avoided by using a slight modification of the scheme in which the block index
  is selected after the optimization step, see~\cite{notarnicola2017camsap} for
  further details.  \oprocend
\end{remark}

\noindent\textbf{Time-Varying Communication Digraph.}
In Section~\ref{sec:block_consensus}, we have assumed that agents communicate
according to a fixed, strongly connected digraph $\GG$. 
However, even if the starting communication network is static, the block
selection rule gives rise to time-varying digraphs $\GG_\ell^t$.
In fact, for the proposed algorithm to work we just need the induced digraph
sequences $\{ \GG_\ell^t\}_{t \ge 0}$ to be $T$-strongly connected. 
Thus, \algname/ immediately applies to a set-up in which agents communicate
according to a time-varying communication digraph $\{\GG^t\}_{t \ge 0}$ (with
associated column stochastic matrix $\tilde{\bA}^t$), provided that the
essentially cyclic rule applied to the time-varying digraph satisfies the
following assumption.

\begin{assumption}
\label{ass:tv_strong_conn}
There exist $T>0$, such that each digraph sequence $\{ \GG_\ell^t\}_{t \ge 0}$
is $T$-strongly connected, for all $\ell\in\until{B}$.\oprocend
\end{assumption}

Specifically, Theorem~\ref{thm:convergence} holds if
Assumption~\ref{ass:strong_conn} is replaced by
Assumption~\ref{ass:tv_strong_conn}.

\section{Numerical Study:\\ Application to Sparse Regression}
\label{sec:simulations}
In this section we apply \algname/ to the distributed sparse regression
problem. Consider a network of $N$ agents taking linear measurements of a sparse
signal $\bx_0\in \real^m$, with measurement matrix
$\bD_i\in \real^{n_i \times m}$.  The observation taken by agent $i$ can be
expressed as $\bb_i = \bD_i\bx_0 + \bn_i$, where $\bn_i\in \real^{n_i}$ accounts
for the measurement noise.  The estimation of the underlying signal $\bx_0$ is
obtained solving the following problem
\begin{align}
  \min_{\bx \in \KK} \: & \: 
  \sum_{i=1}^N \: \underbrace{\| \bD_i \bx - \bb_i \|^2_2}_{ f_i(\bx) }
  \: + \: 
  \reg(\bx),
\label{eq:regression}
\end{align}
where $\bx \in \real^{m}$; $\KK$ is the box constraint set $\KK \triangleq [k_L,k_U]^m$, 
with $k_L\leq k_U$; 
and $\map{\reg}{\real^m}{\real}$ is a difference-of-convex (DC) sparsity-promoting regularizer, 
given by 
\begin{equation*}
  \reg(\bx) 
  \triangleq 
  \lambda \cdot 
  \sum_{j=1}^m r_0(x_j),\quad r_0(x_j) 
  \triangleq \frac{\log(1+\theta |x_j|)}{\log(1+\theta)},
\end{equation*}
where $\lambda$ and $\theta$ are positive tuning parameters.

The first step to apply \algname/ is to build a valid surrogate $\tilde{f}_{i,\ell}$ of $f_i$ (cf. Assumption \ref{ass:surrogate}). To this end, we first rewrite $r_0$ as a difference-of-convex function. It is not difficult 
to check that 
\begin{equation*}
  r_0(x) 
  = 
  \underbrace{\eta(\theta) \, |x|}_{r_0^+(x)} 
  - 
  \underbrace{ \big ( \eta(\theta)\,|x| - r_0(x) \big) }_{r_0^-(x)},
\end{equation*}
where $r_0^+ : \real \to \real $ is convex non-smooth with 
$\eta(\theta ) \triangleq \theta/\log(1+\theta)$,
and $\map{r_0^-}{\real}{\real}$ is convex with Lipschitz continuous first 
order derivative given by  
\begin{equation*}
  \frac{d r_0^-}{dx} (x)
  = 
  \sign(x)\cdot \frac{\theta^2 |x|}{\log(1+\theta)(1+\theta |x|)}.
\end{equation*}

Denoting the coordinates associated with the $\ell$-th block as $\II_\ell \subset \until{B}$, let us 
define the matrix $\bD_{i,\ell}$ [resp. $\bD_{i,-\ell}$] constructed by picking the 
columns of $\bD_i$ that belongs [resp. does not belong] to $\II_\ell$. 
Then, the following is a valid surrogate function for each agent $i$
that satisfy Assumption \ref{ass:surrogate}. We
consider $\tf_i$ obtained as the linearization of $f_i$ and $-r_0^-$, about the current 
solution estimate, which leads to 
\begin{equation*}
\begin{split}
  &
  \tf_{i,\ell}(\bx_{\ell};\bx_{(i,:)}^t) 
  = 
  \left(2\bD_{i,\ell}^\top (\bD_i - \bb_i)\right)^{\!\!\top}\! (\bx_{\ell} - \bx_{(i,\ell)}^t) 
  \\
  & 
  \hspace{0.8cm}
  +
  \frac{\tau_i }{2} \| \bx_{\ell} - \bx_{(i,\ell)}^t\|^2
  - 
  \!\! \sum_{k \in \II_\ell} 
  \underbrace{ \frac{d r_0^-((\bx_{(i,\ell)}^t)_k ) }{dx} }_{v_{ik}^t }
  ( \bx_\ell - \bx_{(i,\ell)}^t)_k,
\end{split}
\end{equation*}
where $x$ is a scalar variable and, e.g., $(\bx^t_{(i,\ell)})_k$ denotes the 
$k$-th scalar component of $\bx^t_{(i,\ell)}$.
Note that the minimizer of $\tilde{f}_{i,\ell}$ can be computed in closed form, and is 
given by
\begin{equation*}
  \bx_{(i,\ell)}^{t+1} = \mathcal{P}_{\KK_\ell} \Big( \SS_{\frac{\lambda \eta}{\tau_i } } 
  \Big ( \bx_{(i,\ell)}^t - \frac{1}{\tau_i}(2\, \bD_{i,\ell}^\top (\bD_i - \bb_i) - \bv^t_{i,\ell})
  \Big) 
  \Big)
\end{equation*}
where $\bv^t_{i,\ell} \triangleq (v_{ik}^t)_{k\in \II_\ell}$, 
$\SS_\lambda(\bx) \triangleq \sign (\bx) \cdot \max\{ |\bx| - \lambda,0\}$ 
(operations are performed element-wise), and $\mathcal{P}_{\KK_\ell}$ is the Euclidean projection  onto $\KK_\ell$.

We test our algorithm, considering the following  simulation set-up.  
The variable dimension $m$ is set to be $400$, $\KK$ is set to be $[-10,10]^{400}$, and the regularization 
parameters are set to $\lambda=0.15$
and $\theta=7$. The network is composed of $N=30$ agents, communicating over an undirected 
graph $\GG$, obtained using an Erd\H{o}s-R\'enyi random model. We considered
two extreme network topologies: a densely and a poorly connected one, which have algebraic 
connectivity equal to $25$ and $5$, respectively.
The components of the ground-truth signal $\bx_0$ are i.i.d.,  generated according to the Normal 
distribution $\NN(0,1)$. To impose sparsity on $\bx_0$, we set the smallest $80\%$ of the 
entries of $\bx_0$ to zero.
Each agent $i$ has a measurement matrix $\bD_i \in \real^{300\times 400}$ with 
i.i.d.~$\NN(0,1)$ distributed entries (with $\ell_2$-normalized rows), and 
the observation noise $\bn_i \in \real^{300}$ has entries i.i.d. distributed according 
to $\NN (0,0.5)$.
 
We compare our algorithm with the (sub)gradient-projection algorithm 
proposed in~\cite{bianchi2013convergence}. 
Note that there is no formal proof of convergence for such an algorithm in the 
nonconvex setting; moreover it is designed for the non-block-wise case, i.e., $B=1$. 
We used the following tuning for the algorithms. 
The diminishing step-size is chosen as $\gamma^t = \gamma^{t-1} (1 - \mu \gamma^{t-1})$,
with $\gamma^0 = 0.3$ and $\mu =10^{-3}$; the proximal parameter $\tau_i = 10$
for all $i$.
To evaluate the algorithmic performance we used three merit functions.
The first one measures the distance from stationarity
of the average of the agents' iterates 
$ \avgdec^t = \frac{1}{N} ( \sum_{i=1}^N \phi_{(i,\ell)}^t \bx_{(i,\ell)}^t )_{\ell=1}^B$, 
and is given by 
\begin{align*}
	J^t \triangleq 
	\bigg \|
	\avgdec^t - 
	\mathcal{P}_{\KK}
	  \Big(\SS_{\lambda\eta} 
	  \Big( 
	    \avgdec^t  - \Big(\smallsum_{i=1}^N \nabla f_i (\avgdec^t ) - \reg (\avgdec^t) \Big)
	    \Big) \Big)
	\bigg \|_\infty.
\end{align*}
Note that $J^t$ is a valid merit function: it is continuous and it is zero if and only if the $\avgdec^t$ is a stationary 
solution of problem~\eqref{eq:regression}. 
The other two merit functions quantify the consensus disagreement at each iteration among the
solution estimates and the trackers. They are defined as
 \begin{align*}
  D^t & \triangleq \max_{i\in\until{N}} \| \bx_{(i,:)}^t - \avgdec^t \|,
  \\
  R^t & \triangleq \max_{i\in\until{N}} \| \by_{(i,:)}^t - \avggrad^t \|,
\end{align*}
where the average $\avgdec^t$ is defined as before, while the average tracker
is $\avggrad^t = \frac{1}{N} ( \sum_{i=1}^N \phi_{(i,\ell)}^t \by_{(i,\ell)}^t )_{\ell=1}^B$.

The performance of \algname/ for different choices of the block dimension $B$ 
are reported in Figure~\ref{fig:convergence_rate}.
To fairly compare the algorithms run for different block sizes, we plot $J^t$, $D^t$ and $R^t$
versus the average agents' ``message exchanges'', defined as $t/B$, where $t$ is the
iteration counter used in the algorithm description.
The figures show that stationarity, consensus and correct tracking have been achieved by \algname/
within $200$ message exchanges while the plain gradient scheme \cite{bianchi2013convergence} 
is much slower. 
\begin{figure}[!htbp]
  \centering
	\includegraphics[scale=1]{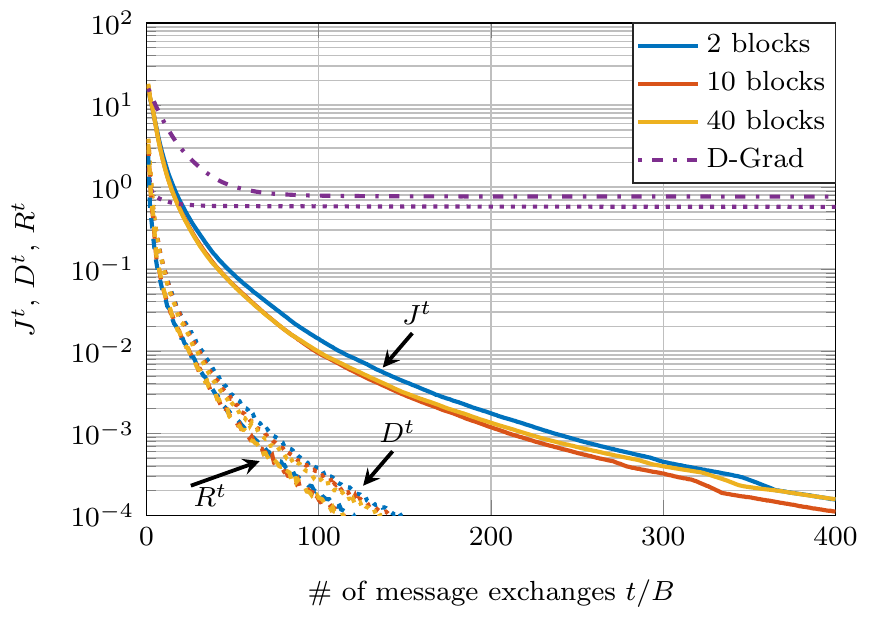}
\caption{
  Optimality measurement $J^t$ (solid), consensus error $D^t$ (dotted)
  and tracking error $R^t$ (dashed)
  versus the number of message exchange for several choices of the number of blocks $B$.
  }
\label{fig:convergence_rate}
\end{figure}

\noindent
Let $t_{\text{end}}$ be the completion time up to a tolerance $10^{-3}$, i.e., the 
iteration counter of the distributed algorithm such that $J^{t_{\text{end}}} \!<\! 10^{-3}$. 
Fig.~\ref{fig:blk_tradeoff} shows the normalized completion time $t_{\text{end}}/B$ 
versus the number of blocks $B$. It highlights how the communication cost 
reduces by increasing the number of blocks.

\begin{figure}[!htbp]
  \centering
	\includegraphics[scale=1]{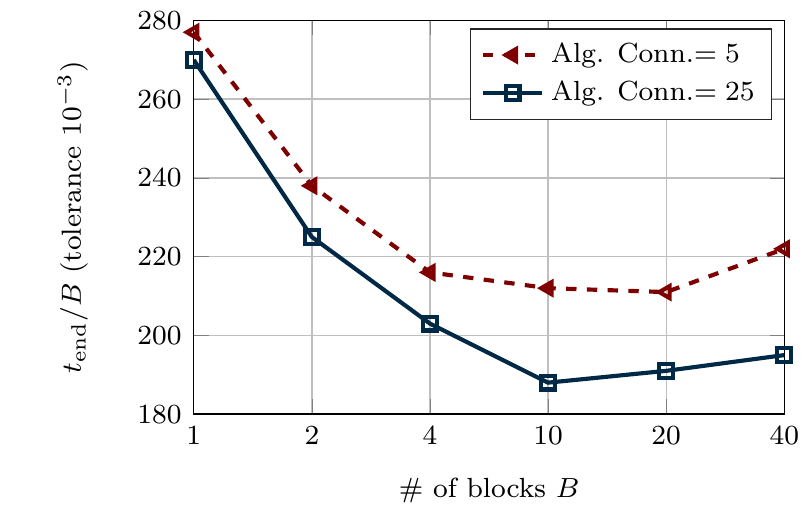}
\caption{
  Completion time required to obtain $J^t < 10^{-3}$ versus the number of blocks $B$
  for two network topologies.
  }
\label{fig:blk_tradeoff}
\end{figure}

\section{Conclusions}
\label{sec:conclusions}
  In this paper we proposed a novel block-iterative distributed scheme for
  nonconvex, big-data optimization problems over networks. That is, we addressed
  large-scale optimization problems in which the dimension of the decision
  vector is huge via a distributed algorithm (over network) in which each agent
  optimizes over and communicates one block only of the entire decision vector.
  Specifically, at each iteration, agents solve a local optimization problem
  (involving only one block of the decision vector) in which a strongly convex
  approximation of the global (possibly nonconvex) cost function is
  minimized. The optimization step is combined with a novel block-wise perturbed
  consensus protocol based on the communication to neighboring agents of one
  block only. This scheme is applied to the local solution estimates and to a
  local vector estimating the gradient of the (smooth part of the) global cost
  function. We proved that agents achieve consensus to their (weighted) average,
  and that any limit point of the average sequence is a stationary solution of
  the optimization problem. Finally, we provided numerical results
  corroborating our theoretical findings and highlighting the impact of the block
  dimension on algorithm performance.  

\appendix
\renewcommand{\thetheorem}{\thesubsection.\arabic{theorem}}

\section{Convergence Analysis}
\label{sec:analysis}
To study convergence of \algname/, it is convenient to introduce some auxiliary variables, namely: 
$\bs^t_{(i,:)}\triangleq (\bs^t_{(i,\ell)})_{\ell=1}^B$ and 
$\bsigma^t_{(i,:)}\triangleq (\bsigma^t_{(i,\ell)})_{\ell=1}^B$,
for all $i\in\until{N}$.
Steps~\eqref{eq:alg_weights}, \eqref{eq:alg_x_consenus}, and \eqref{eq:alg_y_consensus} in \algname/ can 
be then rewritten as: for all $\ell \in \until{B}$ and $i \in \until{N}$,
\begin{align}
  \phi_{(i,\ell)}^{t+1} & = \smallsum_{j \in \innbrs_i } a_{ij\ell}^t \, \phi_{(j,\ell)}^t,
  \label{eq:alg_phi_update}
  \\
  \bs_{(i,\ell)}^{t+1} & = 
  \smallsum_{j\in \innbrs_i } a_{ij\ell}^t 
      \Big(
        \bs_{(j,\ell)}^{t} 
        +
        \gamma^t 
        \phi_{(j,\ell)}^t
        \Deltax_{(j,\ell)}^t%
      \Big ),
  \label{eq:alg_s_update}
  \\
      \bx_{(i,\ell)}^{t+1} & = 
      \dfrac{\bs_{(i,\ell)}^{t+1}}{\phi_{(i,\ell)}^{t+1}},
  \label{eq:alg_x_update}
    \\
      \bsigma_{(i,\ell)}^{t+1} 
      & = \!
      \smallsum_{j \in \innbrs_i}
      \!\!
      a_{ij\ell}^t
      \Big( \!
        \bsigma_{(j,\ell)}^t 
        \!+\!
        \nabla_\ell f_j \big( \bx_{(j,:)}^{t+1} \big)
        \!-\!
        \nabla_\ell f_j \big( \bx_{(j,:)}^{t} \big) \!
      \Big),
  \label{eq:alg_sigma_update}
  \\
  \by_{(i,\ell)}^{t+1} & = 
  \dfrac{\bsigma_{(i,\ell)}^{t+1} }{\phi_{(i,\ell)}^{t+1}},
  \label{eq:alg_y_update}
\end{align}
with each $\bsigma_{(i,:)}^0 \triangleq  \nabla f_i(\bx_{(i,:)}^0)$.

Averaging \eqref{eq:alg_s_update} and~\eqref{eq:alg_sigma_update} 
over $i \in \until{N}$ and using the column stochasticity of each $\bA_{\ell}^t$, 
yields the following dynamics for the block-averages:  for each  $\ell \in \until{B}$,
\begin{align}
  \label{eq:avg_dec_block_evolution}
  \avgdec_\ell^{t+1} 
  & = \avgdec_\ell^t + \gamma^t
  \frac{1}{N} \smallsum_{i=1}^N \phi_{(i,\ell)}^t \Deltax_{(i,\ell)}^t,  
  \\
  \label{eq:avg_grad_block_evolution}
  \avggrad_\ell ^{t+1} 
  & = \avggrad_\ell ^t +
  \frac{1}{N}\smallsum_{i=1}^N \Big( \nabla_{\ell} f_i ( \bx_{(i,:)}^{t+1}) - \nabla_{\ell} f_i ( \bx_{(i,:)}^t ) \Big),
\end{align}
where $\avgdec_\ell^t \triangleq ({1}/{N})\cdot \sum_{i=1}^N \bs_{(i,\ell)}^{t} $ 
and $\avggrad_\ell ^{t} \triangleq ({1}/{N})\cdot\sum_{i=1}^N  \bsigma_{(i,\ell)}^t $. 
We also define $\avgdec^t\triangleq (\avgdec_\ell^t)_{\ell=1}^B$ and 
$\avggrad^t\triangleq (\avggrad_\ell^t)_{\ell=1}^B$. 
To prove Theorem~\ref{thm:convergence},  it is sufficient to show that: 
(i) all the local copies $\bx_{(i,:)}^t$ converge to $\avgdec^t$; 
and (ii) every limit point of $\{ \avgdec^t \}_{t\geq 0}$ is a stationary solution of 
problem~\eqref{eq:problem}. 

Notice that given a linear dynamical system in the 
form~\eqref{eq:avg_dec_block_evolution}, one can always write
\begin{align*}
  \avgdec_\ell^{t+\theta_t} 
  = 
  \avgdec_\ell^{t} + \smallsum_{\tau=t}^{t+\theta_t-1} \bu_\ell^\tau
\end{align*}
for every integer $\theta_t \in [0,T]$, where we used the short-hand $\bu_\ell^\tau  
= \gamma^\tau \frac{1}{N} \sum_{i=1}^N \phi_{(i,\ell)}^\tau \Deltax_{(i,\ell)}^\tau$.
Thus, if the input $\bu_\ell^\tau $ is vanishing, i.e., $ \lim_{\tau\to\infty} 
\| \bu_\ell^\tau \| = 0 $, there holds
\begin{align}
\label{eq:vanishing_avgdec_diff}
\begin{split}
  \lim_{t\to\infty} 
  \| \avgdec_\ell^{t+\theta_t}  - \avgdec_\ell^t \| 
  & 
  =
  \lim_{t\to\infty} 
  \smallsum_{\tau=t}^{t+\theta_t-1} \|\bu_\ell^\tau\|
  \\
  &
  \le
  \lim_{t\to\infty} 
  \smallsum_{\tau=t}^{t+T-1} \|\bu_\ell^\tau\|
  = 0.
\end{split}
\end{align}

\noindent \textbf{Structure of the proof:} 
The proof is organized as follows. In Section~\ref{sec:prelim}, we introduce some preliminary 
results that will be used in the rest of the sections, namely: (i) a formal description of the 
perturbed push-sum algorithm along with its convergence properties; and (ii) a list of key 
properties of a best-response map $\tildex^t$ and related quantities.
Theorem~\ref{thm:convergence}(i) is proven in Section~\ref{sec:consensus_proof}, where 
convergence of the consensus updates~\eqref{eq:alg_x_update} 
and tracking mechanism~\eqref{eq:alg_y_update} is studied. 
More specifically, first we prove that $\lim_{t\to\infty} \| \bx_{ (i,:) }^t - \avgdec^t \| = 0$, 
for all $i \in \until{N}$ (cf.~Proposition~\ref{prop:consensus}), showing thus asymptotic 
consensus of the local estimates $\bx_{(i,:)}^t$; and, second, 
$\lim_{t\to\infty} \| \by_{ (i,:) }^t - \avggrad^t \| = 0$, for all $i \in \until{N}$ 
(cf.~Proposition~\ref{prop:tracker_convergence}), which together with
\begin{equation}
\label{eq:gradient_sum_conservation}
  \avggrad^t = \frac{1}{N} \smallsum_{i=1}^N \nabla f_i(\bx_{(i,:)}^t), \quad \forall t\geq 0, 
\end{equation}
proves that each $\by_{(i,:)}^t$ tracks asymptotically the average of the cost function gradients. 
In Section~\ref{sec:opt},
we study the descent properties of a suitably defined Lyapunov-like function along the 
trajectory $\{(\bx_{(i,:)}^t)_{i=1}^N, \avgdec^t\}_{t\geq 0}$. This result is instrumental to 
show (subsequence) convergence of $\{\avgdec^t\}_{t\geq 0}$ to stationary solutions of 
problem~\eqref{eq:problem} [in the sense of Theorem~\ref{thm:convergence}(ii)], 
which is proven in Section~\ref{sec:convergence_s_bar}.

\subsection{Technical preliminaries} 
\label{sec:prelim}

\subsubsection{Perturbed push-sum consensus} 
Consider a network of $N$ agents communicating,  at each time slot $t$, over the graph 
$\GG^t \triangleq (\until{N},\EE^t)$. The vector form of the  perturbed push-sum protocol 
introduced in~\cite{nedic2015distributed} reads: for all $t\geq 0$,
\begin{align}
\begin{split}
  \psi_i^{t+1} & = \smallsum_{j\in\nbrs_i^t} a_{ij}^t \psi_j^t
  \\
  \boldeta_i^{t+1} & = \smallsum_{j\in\nbrs_i^t} a_{ij}^t ( \boldeta_j^t + \bepsilon_j^t)
  \\
  \bz_i^{t+1} & = \frac{\boldeta_i^{t+1}}{\psi_i^{t+1}},
\end{split}
\label{eq:perturbed_push-sum}
\end{align}
where $\psi_i\in \real$, $\boldeta_i\in \real^n$, $\bz_i\in \real^n$ are agent $i$'s local variables,
with $\psi_i^0 = 1$, and  $\{\bepsilon_i^t\}_{t\geq 0}$ is a given perturbation sequence 
(known by agent $i$ only).
The graph $\GG^t$ and weight matrix $\bA^t \triangleq (a_{ij})_{i,j = 1}^N$ satisfy the 
following assumptions.
\begin{assumption}
\label{ass:T_connectivity}
  The graph sequence $\{\GG^t\}_{t \geq 0}$ is strongly connected, i.e., there 
  exists an integer $T>0$ such that the union digraph $\bigcup_{\tau=0}^{T-1} \GG^{t+\tau} 
  \triangleq (\until{N},  \cup_{\tau=0}^{T-1} \EE^{t+\tau})$ is strongly 
  connected for all $t \geq 0$.\oprocend
\end{assumption}

\begin{assumption}
\label{ass:col_stoch_A}
Each weight matrix $\bA^t$ matches graph $\GG^t$, that is, it satisfies
\begin{itemize}
  \item[(1)] $a_{ij}^t = 0$, if $(j,i) \notin \EE^t$; and $a_{ij}^t \geq \kappa > 0$, if  $(j,i) \in \EE^t$;
  \item[(2)] $a_{ii}^t \geq \kappa > 0$, for all $i \in \until{N}$;
  \item[(3)] $\bA^t$ is column stochastic, i.e., $\1^\top \bA^t = \1^\top$.\oprocend
\end{itemize}
\end{assumption}

The convergence properties of the (scalar version of the) perturbed push-sum protocol 
have been studied in  \cite[Lemma~1]{nedic2015distributed}, as summarized below [for the 
vector case (\ref{eq:perturbed_push-sum})]. 
\begin{lemma} \label{lem:perturbed_pushsum}
  Consider the perturbed push-sum protocol~\eqref{eq:perturbed_push-sum} under 
  Assumptions~\ref{ass:T_connectivity} and \ref{ass:col_stoch_A}.
  Then the following hold:
  \begin{itemize}
    \item[(1)] For all $t\geq 0$, 
\begin{align}
  \label{eq:push_sum_upper_bound}
 \hspace{-0.6cm} \Big \| 
    \bz_i^{t+1} \!-\! \frac{1}{N} \smallsum_{j=1}^N ( \boldeta_j^t + \bepsilon_j^t)
  \Big \| 
  \!\le \!
  c_1 (\rho)^t 
  +
  c_2 \smallsum_{\tau =1}^{t}
  (\rho)^{t-\tau} 
  \sum\limits_{i=1}^N 
  \| \bepsilon_i^\tau \|_1,
\end{align}
where $\rho \in (0,1)$ and $c_1$ and $c_2$ are some positive, finite scalars;

\item[(2)] If the perturbations are vanishing, i.e., $\lim\limits_{t\to\infty} \|\bepsilon_i^t\| = 0$, for all $i\in \until{N}$, then
\begin{align*}
  \lim_{t\to\infty} 
  \Big \| \bz_i^{t+1} - \frac{1}{N} \smallsum_{j=1}^N ( \boldeta_j^t + \bepsilon_j^t) \Big \| 
  = 0;
\end{align*}

\item[(3)] The sequence $\{ \psi_i^t \}_{t \ge 0}$ satisfies 
\begin{equation*}
  \inf_{t \ge 0 } \left( \min_{i \in\until{N}} \psi_i^t\right) \triangleq \delta >0.
  \eqoprocend
\end{equation*}
\end{itemize}
\end{lemma}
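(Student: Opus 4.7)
The plan is to reduce the vector-valued dynamics in~\eqref{eq:perturbed_push-sum} to $n$ independent scalar perturbed push-sum recursions, to which the scalar result \cite[Lemma~1]{nedic2015distributed} applies directly. The key observation is that the scalar update $\psi_i^{t+1} = \smallsum_{j\in\nbrs_i^t} a_{ij}^t \psi_j^t$ does not depend on the vector-valued quantities, and that the recursions for $\boldeta_i^t$ and $\bz_i^t$ decouple componentwise: for each coordinate $k\in\{1,\ldots,n\}$, the scalars $(\boldeta_i^t)_k$ and $(\bz_i^t)_k$, driven by the scalar perturbations $(\bepsilon_i^t)_k$, evolve exactly according to the scalar perturbed push-sum sharing the \emph{same} $\psi$-sequence.

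Having made this observation, parts (1) and (2) would follow by invoking the scalar result once for each coordinate $k$ and combining the $n$ bounds. Since each coordinate satisfies a geometric-plus-perturbation bound with the same contraction rate $\rho\in(0,1)$ and with constants depending only on the (vector-independent) $\psi$-dynamics, summing the $n$ componentwise bounds (or bounding the vector norm by the sum of absolute values of entries) yields the claimed estimate~\eqref{eq:push_sum_upper_bound} with possibly larger constants $c_1, c_2$ absorbing the dependence on $n$. Part~(2) is then immediate: if $\|\bepsilon_i^t\|\to 0$, the transient term $c_1(\rho)^t$ goes to zero, and a standard Kronecker-type argument applied to the convolution $\smallsum_{\tau=1}^t (\rho)^{t-\tau} \|\bepsilon_i^\tau\|_1$ shows that the cumulative perturbation contribution also vanishes.

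For part~(3), I would argue directly from Assumptions~\ref{ass:T_connectivity} and~\ref{ass:col_stoch_A}. Column stochasticity combined with the initialization $\psi_i^0 = 1$ conserves the total mass $\smallsum_{i=1}^N \psi_i^t = N$ for every $t\ge 0$, giving the upper bound $\psi_i^t \le N$. For the lower bound, the positive self-weights $a_{ii}^t \ge \kappa$ together with $T$-strong connectivity imply that every entry of any window product $\bA^{t+NT-1} \cdots \bA^{t}$ is bounded below by $\kappa^{NT}$; expressing $\psi^{t+NT}$ as this product applied to $\psi^t$ and invoking $\smallsum_i \psi_i^t = N$ yields a uniform positive lower bound $\psi_i^{t+NT} \ge N\kappa^{NT}$, with an analogous (larger) bound for the initial window obtained from diagonal dominance alone. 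Hence the infimum $\delta>0$ is attained.

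The only genuinely nontrivial ingredient is the geometric contraction rate $\rho$ underlying~\eqref{eq:push_sum_upper_bound}, which stems from the weak ergodicity of products of column-stochastic matrices under joint $T$-strong connectivity and uniformly positive weights. This is where the heavy lifting is hidden; I would cite \cite{nedic2015distributed} for this fact rather than rederive it, since the present lemma is stated precisely as a vector restatement of that result for later use in the convergence analysis of \algname/.
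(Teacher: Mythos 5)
Your proposal is correct and follows essentially the same route as the paper, which gives no independent proof but simply states the lemma as the vector-form restatement of the scalar result in \cite[Lemma~1]{nedic2015distributed}; your coordinate-wise decoupling (exploiting that the $\psi$-dynamics are independent of the vector variables and shared across coordinates) is exactly the intended justification, with constants absorbing the dependence on $n$. Your direct argument for part~(3) via mass conservation and the $\kappa^{NT}$ lower bound on window products is also consistent with what the cited reference establishes.
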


Note that, since $\bA^t$ is column stochastic, we have 
$\bar{\boldeta}^{t+1} \triangleq \frac{1}{N} \sum_{j=1}^N \boldeta_j^{t+1} 
= \frac{1}{N} \sum_{j=1}^N ( \boldeta_j^t + \bepsilon_j^t)$. Therefore, the 
bound~\eqref{eq:push_sum_upper_bound} can be written also as
\begin{align}
  \Big \| 
    \bz_i^{t+1} - \bar{\boldeta}^{t+1} 
  \Big \| 
  \le 
  c_1 (\rho)^t 
  +
  c_2 \smallsum_{\tau =1}^{t}
  (\rho)^{t-\tau} 
  \sum\limits_{i=1}^N 
  \| \bepsilon_i^\tau \|_1.
\label{eq:push-sum evolution bound}
\end{align}

\subsubsection{Properties of the best-response map $\tildex^t$} 
\label{app:technicalities}

In this subsection, we introduce some intermediate results dealing with key properties of 
a best-response map $\tildex^t$ and related quantities. For notational simplicity, we state 
the results in a more abstract form, omitting  time and agent index dependencies. 

Consider the following optimization problem
\begin{align}
\label{eq:technicalities}
  \tildex \triangleq \argmin_{\bx \in \KK} \: \: h(\bx) + \reg(\bx)
\end{align}
where $\KK$ is a closed convex set and  $h$ (resp. $\reg$) is a $\CC^1$ (resp. convex, possibly nonsmooth) function on (an open set containing) $\KK$. Given some $\mathbf{w}\in \KK$, let us also introduce the function $\hath(\bullet; \bw, \nabla h (\bw)): \KK \to \KK$ (the explicit dependence of $\hath$ from $\bw$ and  $\nabla h (\bw)$ is immaterial for our discussion). We assume that  $\hath(\bullet; \bw, \nabla h (\bw))$ satisfies  the following conditions:
\begin{itemize}
\item[(1)] $\hath(\bullet; \bw, \nabla h (\bw))$ is $\CC^1$ (on an open set containing $\KK$) and $\tau$-strongly convex on $\KK$;
\item[(2)] $\nabla \hath (\bw; \bw, \nabla h (\bw) ) = \nabla h (\bw)$;
\item[(3)] $\nabla_{\bw} \hath (\bx; \bw, \nabla h (\bw) )$ is uniformly Lipschitz continuous for all $\bx \in \KK$.
\end{itemize}
The function $\hath(\bullet; \bw, \nabla h (\bw))$ should be considered as a strongly convex approximation of $h$ having the same gradient of $h$ at $\mathbf{w}$. Given $\hath(\bullet; \bw, \nabla h (\bw))$, we can finally introduce the following  optimization problem 
\begin{align}
  \label{best-response-hat-x}
  \hatx(\bw) = \argmin_{ \bx \in \KK} \: \hath(\bx; \bw, \nabla h (\bw)) + r(\bx),
\end{align}
which can be considered as a convex approximation of (\ref{eq:technicalities}). 

The following results establish some key properties of the best-response maps $ \tildex$ and $\hatx$.
\begin{lemma}
  Consider problem~\eqref{eq:technicalities} under the further assumption that $h$ is $\tau$-strongly convex. Then, for all $\bv \in \KK$, the following hold:
  \begin{enumerate}
    \item 
    $ 
      \| \tildex  - \bv \|
      \le
      \dfrac{1}{\tau}
      \| \nabla h(\bv)  \|
      +
     \dfrac{1}{\tau}
     \|  \subgrad \reg ( \tildex ) \|
  $;\vskip 0.2cm

\item $  \nabla h(\bv) ^\top ( \tildex - \bv) \le -\tau   \| \tildex  - \bv \|^2
  -(\reg ( \tildex ) -  \reg ( \bv ) ).
$
\end{enumerate}
\label{lem:strongly_convex_minimization}
\end{lemma}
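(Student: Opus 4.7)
The plan is to base both parts on the first-order optimality condition for $\tildex$ and the $\tau$-strong convexity of $h$; no heavy machinery is needed. Let me set up the common starting point. Since $\tildex$ minimizes $h+\reg$ on the closed convex set $\KK$, there exists a subgradient $\bxi \in \partial \reg(\tildex)$ such that the variational inequality
\[
(\nabla h(\tildex) + \bxi)^\top (\bv - \tildex) \ge 0, \qquad \forall \, \bv \in \KK,
\]
holds. I will interpret $\subgrad \reg(\tildex)$ in the statement as this particular $\bxi$ coming from the KKT system, so $\|\bxi\| = \|\subgrad \reg(\tildex)\|$.

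For (ii), I would first add and subtract $\nabla h(\bv)$ and invoke the strong monotonicity of $\nabla h$ (the usual consequence of $\tau$-strong convexity): $(\nabla h(\tildex) - \nabla h(\bv))^\top (\tildex - \bv) \ge \tau\,\|\tildex - \bv\|^2$. Rearranging this gives $\nabla h(\bv)^\top (\tildex - \bv) \le \nabla h(\tildex)^\top (\tildex - \bv) - \tau \|\tildex-\bv\|^2$. I next bound $\nabla h(\tildex)^\top(\tildex - \bv)$ from above by $-\bxi^\top(\tildex-\bv)$ via the optimality inequality, and then use convexity of $\reg$, $\reg(\bv) \ge \reg(\tildex) + \bxi^\top(\bv - \tildex)$, which is equivalent to $-\bxi^\top(\tildex - \bv) \le \reg(\bv) - \reg(\tildex)$. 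Chaining these three inequalities delivers exactly (ii).

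For (i), I would reuse the same ingredients but close the estimate with Cauchy–Schwarz rather than with $\reg$-values. Starting again from $\tau\|\tildex-\bv\|^2 \le (\nabla h(\tildex) - \nabla h(\bv))^\top(\tildex-\bv)$ and using the optimality bound $\nabla h(\tildex)^\top(\tildex-\bv) \le -\bxi^\top(\tildex-\bv)$, I obtain
\[
\tau \|\tildex-\bv\|^2 \le -\bxi^\top(\tildex-\bv) - \nabla h(\bv)^\top(\tildex-\bv) \le \big(\|\bxi\| + \|\nabla h(\bv)\|\big)\|\tildex-\bv\|.
\]
Dividing by $\|\tildex-\bv\|$ (and handling the trivial case $\tildex = \bv$ separately) yields (i).

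There is really no hard step here; it is a standard variational-inequality manipulation. The only point requiring mild care is tracking signs when moving between the two natural orderings $\tildex - \bv$ versus $\bv - \tildex$, and making sure that the particular subgradient produced by the KKT condition is the one referenced by $\subgrad \reg(\tildex)$ in the statement (otherwise a separate $\|\bxi\| \le \sup_{\bxi'\in \partial \reg(\tildex)}\|\bxi'\|$ argument would be needed, which is harmless under the bounded-subgradient part of Assumption~\ref{ass:cost_functions}).
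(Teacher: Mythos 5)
Your proof is correct and follows essentially the same route the paper indicates: the first-order optimality (variational inequality) condition for $\tildex$, the strong monotonicity of $\nabla h$ implied by $\tau$-strong convexity, and then Cauchy--Schwarz for part (i) versus the convexity of $\reg$ for part (ii). Your remark about fixing the particular subgradient $\bxi$ produced by the optimality condition as the one denoted $\subgrad \reg(\tildex)$ is the right reading of the statement, so no additional argument is needed.
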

\begin{proof} 
The proof follows readily from the first order optimality conditions of \eqref{eq:technicalities} and the convexity of $r$. 
\end{proof}

\begin{proposition}[\!\!\!\!{\cite[Prop.~8]{facchinei2015parallel}}] 
\label{prop:best_response_map_properties}
The best-response map ${\KK}{\ni}\mathbf{w}\longmapsto \hatx (\bw)$ defined 
in~\eqref{best-response-hat-x} satisfies
\begin{itemize}
    \item[(1)] $\hatx ( \bullet )$ is Lipschitz continuous on $\KK$; 

    \item[(2)] The set of the fixed-points of $\hatx (\bullet)$ coincides with the set of stationary 
    solutions of problem~\eqref{eq:technicalities}; therefore $\hatx (\bullet)$ has a fixed 
    point. \oprocend
  \end{itemize}
\end{proposition}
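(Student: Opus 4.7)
The plan is to derive both claims from the first-order optimality conditions of the strongly convex subproblem that defines $\hatx(\bw)$, exploiting the three listed properties of $\hath$.

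For part~(1), I would fix $\bw_1,\bw_2\in\KK$, set $\bx_k \triangleq \hatx(\bw_k)$ for $k=1,2$, and write the variational inequality satisfied by each minimizer with subgradients $\xi_k\in\subgrad r(\bx_k)$. Testing each inequality against the other's minimizer and summing eliminates the subgradient contribution by monotonicity of $\subgrad r$. The standard trick of adding and subtracting $\nabla\hath(\bx_1;\bw_2,\nabla h(\bw_2))$ then splits the resulting relation into a ``diagonal'' piece controlled from below by $\tau\|\bx_1-\bx_2\|^2$ via strong convexity of $\hath(\bullet;\bw_2,\nabla h(\bw_2))$ (property~(1)), and a ``cross'' piece bounded above by $L_{\hath}\|\bw_1-\bw_2\|\,\|\bx_1-\bx_2\|$ using the uniform Lipschitz continuity of $\nabla_{\bw}\hath$ (property~(3)) together with Cauchy--Schwarz. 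Dividing by $\tau\|\bx_1-\bx_2\|$ yields Lipschitz continuity of $\hatx$ with constant $L_{\hath}/\tau$.

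For part~(2), the key observation is that the variational inequality characterizing a fixed point $\bw^*=\hatx(\bw^*)$ is
\[
\bigl(\nabla\hath(\bw^*;\bw^*,\nabla h(\bw^*)) + \xi\bigr)^\top(\bx-\bw^*) \ge 0, \quad \forall \bx\in\KK,
\]
for some $\xi\in\subgrad r(\bw^*)$. The tangency property~(2) of $\hath$ replaces $\nabla\hath(\bw^*;\bw^*,\nabla h(\bw^*))$ with $\nabla h(\bw^*)$, producing exactly the stationarity condition for problem~\eqref{eq:technicalities}. Since each step is an ``if and only if,'' the two sets of points coincide.

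The main obstacle I anticipate is the existence statement appended to~(2): it does not follow from the variational equivalence alone. The cleanest route is to invoke continuity of $\hatx$ from part~(1) and apply a standard fixed-point argument---Brouwer when $\KK$ is compact, or a coercivity-plus-restriction argument in the unbounded case, leveraging the coercivity hypothesis inherited from the underlying problem (e.g., Assumption~\ref{ass:cost_functions}(v) in the setting of~\eqref{eq:problem}). Once the existence of a stationary solution of~\eqref{eq:technicalities} is secured, the equivalence in part~(2) automatically delivers a fixed point of $\hatx$ as well.
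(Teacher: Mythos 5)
The paper does not prove this proposition at all: it is imported verbatim from the cited reference (\cite[Prop.~8]{facchinei2015parallel}), so there is no in-paper argument to compare against. Your proof is correct and is essentially the standard argument behind that citation: for (1), the two variational inequalities plus monotonicity of $\subgrad \reg$, strong monotonicity of $\nabla\hath(\bullet;\bw_2,\nabla h(\bw_2))$, and the uniform Lipschitz dependence of the gradient on the base point give $\|\hatx(\bw_1)-\hatx(\bw_2)\|\le (L_{\hath}/\tau)\|\bw_1-\bw_2\|$; for (2), the tangency condition $\nabla\hath(\bw;\bw,\nabla h(\bw))=\nabla h(\bw)$ makes the fixed-point variational inequality coincide with the stationarity condition, with the converse direction relying (as you implicitly do) on convexity of the subproblem, so that the variational inequality is sufficient for optimality, and on strong convexity for uniqueness of the minimizer. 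Two small points worth making explicit: the property you call Lipschitz continuity of $\nabla_{\bw}\hath$ should be read as Lipschitz continuity of the map $\bw\mapsto\nabla_{\bx}\hath(\bx;\bw,\nabla h(\bw))$ uniformly in $\bx$ (this is what Assumption~\ref{ass:surrogate}(iii) actually provides); and you are right that the existence claim in (2) needs an extra ingredient beyond the equivalence --- in the paper's setting it comes for free because coercivity of $U$ (Assumption~\ref{ass:cost_functions}(v)) guarantees a minimizer, hence a stationary point, hence a fixed point, so the Brouwer route is not needed.
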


We can now customize the above results to our setting.  
Consider the best-response $\widetilde{\bx}_{(i,\ell)}^t$ in \eqref{eq:alg_descent_dir};
applying Lemma~\ref{lem:strongly_convex_minimization}(ii) we readily obtain the following.
\begin{lemma}
\label{lem:gradient_related_condition}
The best-response $\widetilde{\bx}_{(i,\ell)}^t$ defined in~\eqref{eq:alg_descent_dir} satisfies
\begin{align}
\label{eq:sufficient_descent} 
\begin{split}
  \big( \by_{(i,\ell)}^{t} \big)^\top 
  \Deltax_{ (i,\ell) }^{t} \le - \tau_i \|\Deltax_{ (i,\ell) }^{t}  \|^2 - \big(r_\ell (\widetilde{\bx}_{(i,\ell)}^t) - r_\ell (\bx_{(i,\ell)}^t)\big),
\end{split}
\end{align}
for all $\ell\in \until{B}$.\oprocend
\end{lemma}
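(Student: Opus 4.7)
The plan is to reduce the claim to a direct application of Lemma~\ref{lem:strongly_convex_minimization}(ii), which is tailored for a strongly-convex composite problem of exactly the form \eqref{eq:alg_local_min}. I would split into two cases on the block index $\ell$.

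The case $\ell \neq \ell_i^t$ is immediate: by the definition \eqref{eq:alg_descent_dir}, $\Deltax_{(i,\ell)}^t = \0$, so the left-hand side vanishes; with the natural convention that $\widetilde{\bx}_{(i,\ell)}^t = \bx_{(i,\ell)}^t$ for the blocks that are not updated at time $t$, the $r_\ell$-difference on the right vanishes as well, and $\tau_i\|\Deltax_{(i,\ell)}^t\|^2 = 0$. Hence \eqref{eq:sufficient_descent} holds trivially with equality.

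For the non-trivial case $\ell = \ell_i^t$, I would identify the ingredients in Lemma~\ref{lem:strongly_convex_minimization}: set $h \equiv \surr_{i,\ell_i^t}(\bullet;\bx_{(i,:)}^t,\by_{(i,\ell_i^t)}^t)$ as the smooth part, $\reg \equiv r_{\ell_i^t}$ as the convex nonsmooth part, $\tildex \equiv \widetilde{\bx}_{(i,\ell_i^t)}^t$ as the minimizer, and $\bv \equiv \bx_{(i,\ell_i^t)}^t$. The hypothesis that $h$ is $\tau_i$-strongly convex follows from Assumption~\ref{ass:surrogate}(1) applied to $\tf_i(\bullet;\bx_{(i,:)}^t)$ combined with the observation that the correction term $\bigl(N\by_{(i,\ell_i^t)}^t - \nabla_{\ell_i^t} f_i(\bx_{(i,:)}^t)\bigr)^\top (\bx_{\ell_i^t} - \bx_{(i,\ell_i^t)}^t)$ added inside $\surr_{i,\ell_i^t}$ is affine in $\bx_{\ell_i^t}$ and therefore preserves the strong-convexity modulus.

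The key step is then to recognize the gradient $\nabla h(\bv)$ that appears on the left-hand side of Lemma~\ref{lem:strongly_convex_minimization}(ii). Differentiating $\surr_{i,\ell_i^t}$ with respect to its first argument and evaluating at $\bx_{(i,\ell_i^t)}^t$ gives
\begin{align*}
\nabla \surr_{i,\ell_i^t}\bigl(\bx_{(i,\ell_i^t)}^t;\bx_{(i,:)}^t,\by_{(i,\ell_i^t)}^t\bigr)
= \nabla \tf_i\bigl(\bx_{(i,\ell_i^t)}^t;\bx_{(i,:)}^t\bigr) + N\by_{(i,\ell_i^t)}^t - \nabla_{\ell_i^t} f_i\bigl(\bx_{(i,:)}^t\bigr),
\end{align*}
and the gradient-consistency property (Assumption~\ref{ass:surrogate}(2)) cancels the first and third terms, leaving a multiple of $\by_{(i,\ell_i^t)}^t$. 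Substituting this into the bound of Lemma~\ref{lem:strongly_convex_minimization}(ii) yields \eqref{eq:sufficient_descent} on the updated block. The only subtlety, and the single ``obstacle'' worth flagging, is bookkeeping of the scalar factor arising from the $N\by_{(i,\ell_i^t)}^t$ gradient correction relative to the $\by_{(i,\ell_i^t)}^t$ appearing in the statement; this is absorbed into the strong-convexity constant when stating the inequality, and does not require any new argument.
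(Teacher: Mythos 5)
Your proposal is correct and follows essentially the same route as the paper, whose entire proof is the one-line remark preceding the lemma: apply Lemma~\ref{lem:strongly_convex_minimization}(ii) with $h=\surr_{i,\ell_i^t}(\bullet;\bx_{(i,:)}^t,\by_{(i,\ell_i^t)}^t)$, $\reg=r_{\ell_i^t}$, $\bv=\bx_{(i,\ell_i^t)}^t$, using strong convexity of the surrogate (the linear correction is affine) and $\nabla\surr_{i,\ell_i^t}(\bx_{(i,\ell_i^t)}^t;\cdot)=N\by_{(i,\ell_i^t)}^t$ from Assumption~\ref{ass:surrogate}(2), with the non-updated blocks trivial since $\Deltax_{(i,\ell)}^t=\0$. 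The one caveat is your closing remark: the exact application gives $N\big(\by_{(i,\ell)}^t\big)^\top\Deltax_{(i,\ell)}^t$ on the left (equivalently, $\tau_i/N$ and a $1/N$ coefficient on the $r_\ell$-difference after dividing), and the $r_\ell$-difference, not being sign-definite, cannot simply be ``absorbed into the strong-convexity constant''---but this factor-$N$ normalization is elided in the paper's own statement and proof as well, so it is not a gap particular to your argument.
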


Finally, consider the best-response map ${\KK}\,{\ni}\,\mathbf{w} \longmapsto \hatx_{(i,\ell)} (\bw)$, 
defined as
\begin{equation}
\label{eq:xhat_definition}
 \hspace{-0.3cm}  \hatx_{(i,\ell)} \big(\bw\big) 
  \triangleq 
  \argmin_{\bx_\ell \in \KK_\ell } \:
  \surr_{i,\ell} \Big(
    \bx_\ell ;\bw,  \dfrac{1}{N} \smallsum_{i=1}^N \nabla_\ell f_i (\bw)
  \Big) 
  + r_\ell (\bx_\ell ).
\end{equation}
Clearly \eqref{eq:xhat_definition} is an instance of \eqref{best-response-hat-x}. It follows readily from 
Proposition~\ref{prop:best_response_map_properties} that $\hatx_{(i,\ell)}(\bullet)$ enjoys the following 
properties. 
\begin{lemma}
  \label{lem:loc_best_response_property}
  The best-response  $\hatx_{(i,\ell)} ( \bullet ) $ defined in~\eqref{eq:xhat_definition}  
  satisfies:
  \begin{itemize}
    \item[(1)] $\hatx_{(i,\ell)} ( \bullet ) $ is  $\widehat{L}_{i,\ell}$-Lipschitz continuous on $\KK$;
    \item[(2)] The set of the fixed-points of  
      $\hatx_{(i,:)} ( \bullet ) \triangleq \big(\hatx_{(i,\ell)} ( \bullet )  \big)_{\ell=1}^B$ 
      coincides with the set of stationary solutions of problem~\eqref{eq:problem}.
      \oprocend
  \end{itemize}%
\end{lemma}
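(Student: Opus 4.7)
The plan is to derive both items by viewing \eqref{eq:xhat_definition}, for each block index $\ell$ and every fixed $\bw\in\KK$, as a specific instance of the abstract best-response problem \eqref{best-response-hat-x}. The identification is: $\KK_\ell$ plays the role of the feasible set, $r_\ell$ the role of the regularizer, the smooth function $h$ is the block-restricted sum $\bx_\ell \mapsto \sum_{j=1}^N f_j(\bw_1,\dots,\bw_{\ell-1},\bx_\ell,\bw_{\ell+1},\dots,\bw_B)$, and $\hath(\cdot;\bw,\nabla h(\bw_\ell))$ is $\surr_{i,\ell}(\cdot;\bw,(1/N)\sum_j \nabla_\ell f_j(\bw))$. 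Once the abstract hypotheses on $\hath$ are checked, item (1) follows from Proposition~\ref{prop:best_response_map_properties}(1) applied block by block, yielding a Lipschitz constant that can be taken as the $\widehat{L}_{i,\ell}$ appearing in the statement.

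The verification of the hypotheses on $\hath$ would proceed as follows. Strong convexity in $\bx_\ell$ with modulus $\tau_i$ is inherited from Assumption~\ref{ass:surrogate}(1), since the extra linear term appended to $\tf_i(\cdot;\bw)$ in the definition of $\surr_{i,\ell}$ does not affect the Hessian. The gradient-matching condition reduces to computing $\nabla_{\bx_\ell}\surr_{i,\ell}(\bw_\ell;\bw,(1/N)\sum_j \nabla_\ell f_j(\bw))$ and using Assumption~\ref{ass:surrogate}(2), which gives $\nabla \tf_i(\bw_\ell;\bw)=\nabla_\ell f_i(\bw)$; together with the correction $\sum_j \nabla_\ell f_j(\bw)-\nabla_\ell f_i(\bw)$ this collapses to $\sum_j \nabla_\ell f_j(\bw)$, as required. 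Uniform Lipschitz continuity of the surrogate's gradient with respect to $\bw$ follows from combining Assumption~\ref{ass:surrogate}(3) with the Lipschitz smoothness of each $f_j$ (Assumption~\ref{ass:cost_functions}(3)).

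For item (2), I would argue directly rather than invoke Proposition~\ref{prop:best_response_map_properties}(2) verbatim. Strong convexity of the objective in \eqref{eq:xhat_definition} implies that $\hatx_{(i,\ell)}(\bw^\star)=\bw^\star_\ell$ for every $\ell$ is equivalent to the first-order optimality condition holding at $\bw^\star_\ell$ for each block problem. Using the gradient-matching identity from item (1), this condition becomes
\begin{align*}
\Big(\smallsum_{j=1}^N \nabla_\ell f_j(\bw^\star) + \partial r_\ell(\bw^\star_\ell)\Big)^{\!\!\top}(\bx_\ell - \bw^\star_\ell) \ge 0, \quad \forall \bx_\ell \in \KK_\ell.
\end{align*}
Because $\KK=\prod_\ell \KK_\ell$ and the regularizer is block-separable, requiring this for every $\ell$ is exactly the stationarity condition for problem~\eqref{eq:problem} at $\bw^\star$, which gives the claimed equivalence.

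The main technical point, and the only place where care is needed, is the gradient-matching bookkeeping in item (1): the asymmetry between the correction term $N\by - \nabla_\ell f_i$ in the definition of $\surr_{i,\ell}$ and the $(1/N)$-averaged gradient input in \eqref{eq:xhat_definition} must be tracked so that the cancellation at the base point $\bx_\ell=\bw_\ell$ produces precisely the full sum $\sum_j \nabla_\ell f_j(\bw)$. Everything else is routine manipulation.
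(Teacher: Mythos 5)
Your proposal is correct and follows essentially the paper's route: the paper simply notes that \eqref{eq:xhat_definition} is an instance of the abstract best-response problem \eqref{best-response-hat-x} and invokes Proposition~\ref{prop:best_response_map_properties} (i.e., \cite[Prop.~8]{facchinei2015parallel}), which is exactly your reduction, with the hypothesis checks (strong convexity, gradient matching via Assumption~\ref{ass:surrogate}(2), Lipschitz dependence on the base point) filled in explicitly. Your direct first-order-optimality argument for item (2) is just an unpacked version of what the paper gets by applying the proposition to the full-vector surrogate $\sum_{\ell}\surr_{i,\ell}$ together with the block-separability of $\KK$ and of the regularizer, so there is no substantive difference.
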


\subsection{Convergence of Consensus and Tracking}
\label{sec:consensus_proof}
In this subsection we prove that i) the local  estimates $\bx_{(i,:)}^t$ reach asymptotic consensus 
(cf.~Proposition~\ref{prop:consensus}); and ii) all  $\by_{(i,:)}^t$ are asymptotically consensual while 
tracking the average of the gradients, namely $\frac{1}{N}\sum_{i=1}^N \nabla f_i(\bx_{(:,i)}^t)$ 
(cf.~Proposition~\ref{prop:tracker_convergence}). Note that 
Proposition~\ref{prop:consensus} also proves statement (i) of 
Theorem~\ref{thm:convergence}.

\subsubsection{Achieving consensus}

We begin observing that, for each $\ell\in  \until{B}$,  the block-wise 
$\bx$-update of \algname/ [cf.~\eqref{eq:alg_phi_update}--\eqref{eq:alg_x_update}] 
is an instance of the perturbed push-sum algorithm (\ref{eq:perturbed_push-sum}), 
with $\bepsilon_{i}^t \triangleq \gamma^t \phi_{(i,\ell)}^t \Deltax_{(i,\ell)}^t$ 
and $n=d$. By Lemma~\ref{lem:perturbed_pushsum}(2), it follows that convergence 
of each $\bx_{(i,\ell)}^t$ to the average $\avgdec_\ell^t$ can be readily proven showing 
that each $\Deltax_{(i,\ell)}^t$ is {\it uniformly bounded}. In fact, this together with 
$\gamma^t \downarrow 0$ and $\phi_{(i,\ell)}^t\leq N$, for all $i\in \until{N}$ and 
$t\geq 0$, yields $\lim_{t\to \infty} \|\bepsilon_{i}^t\|=0$ 
(cf.~Proposition~\ref{prop:consensus}).
The following lemma proves that each $\Deltax_{(i,\ell)}^t$ is uniformly bounded.
\begin{lemma}
  \label{lem:bounds_x_y} 
  Consider problem~\eqref{eq:problem} under Assumption~\ref{ass:cost_functions}, 
  \ref{ass:strong_conn}, \ref{ass:col_stoch}, \ref{ass:surrogate}, \ref{ass:step-size}. 
  Let $\{ (\phi_{(i,:)}^t)_{i=1}^N \}_{t\ge 0}$, $\{(\bx_{(i,:)}^t)_{i=1}^N \}_{t\ge 0}$ 
  and $\{(\by_{(i,:)}^t)_{i=1}^N \}_{t\ge 0}$ be generated by \algname/. 
  Then, for all $\ell \in \until{B}$ and $i \in\until{N}$, the following holds:
  \begin{equation}
    \label{eq:y_bound}
    \sup_{t\ge 0} \:
    \Big \|
      \by_{(i,\ell)}^t - \avggrad_\ell^t
    \Big \| < C_1,
  \end{equation}
  and %
  \begin{equation}
    \label{eq:x_bound}
    \sup_{t\ge 0} \:
    \Big \|
      \Deltax_{(i,\ell)}^t
    \Big \| < C_2,
  \end{equation}
  where  $C_1$ and $C_2$ are some  positive, finite scalars.
\end{lemma}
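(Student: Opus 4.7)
The plan is to establish \eqref{eq:y_bound} first via the perturbed push-sum bound (Lemma~\ref{lem:perturbed_pushsum}), and then deduce \eqref{eq:x_bound} by applying Lemma~\ref{lem:strongly_convex_minimization}(i) to the strongly convex local subproblem~\eqref{eq:alg_local_min}, exploiting the fact that the gradient of the surrogate at $\bx_{(i,\ell)}^t$ equals $N\by_{(i,\ell)}^t$.

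The main preliminary observation is that all iterates stay feasible: $\bx_{(i,:)}^t \in \KK$ for all $i$ and $t$. Indeed, \eqref{eq:alg_x_consenus} expresses $\bx_{(i,\ell)}^{t+1}$ as a convex combination (since the weights $a_{ij\ell}^t\phi_{(j,\ell)}^t/\phi_{(i,\ell)}^{t+1}$ are nonnegative and sum to $1$) of points $\bx_{(j,\ell)}^t + \gamma^t\Deltax_{(j,\ell)}^t$, each of which is either $\bx_{(j,\ell)}^t \in \KK_\ell$ (for non-updated blocks) or $(1-\gamma^t)\bx_{(j,\ell)}^t + \gamma^t\tildex_{(j,\ell)}^t \in \KK_\ell$ (since $\gamma^t \in (0,1]$ and $\tildex_{(j,\ell)}^t \in \KK_\ell$); an induction closes the argument. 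Next, averaging~\eqref{eq:alg_sigma_update} and using the column stochasticity of $\bA_\ell^t$ yields $\avggrad_\ell^{t+1} = \avggrad_\ell^t + (1/N)\sum_j (\nabla_\ell f_j(\bx_{(j,:)}^{t+1}) - \nabla_\ell f_j(\bx_{(j,:)}^t))$, which telescopes (together with $\bsigma_{(i,:)}^0 = \nabla f_i(\bx_{(i,:)}^0)$) to~\eqref{eq:gradient_sum_conservation}. Combined with feasibility and Assumption~\ref{ass:cost_functions}(iii), this gives $\|\avggrad_\ell^t\| \le M_0 \triangleq \max_i \sup_{\bx \in \KK}\|\nabla f_i(\bx)\| < \infty$ for all $t$ and $\ell$.

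For \eqref{eq:y_bound}, observe that the block-wise update~\eqref{eq:alg_sigma_update}--\eqref{eq:alg_y_update} fits the template of the perturbed push-sum~\eqref{eq:perturbed_push-sum} with perturbation $\bepsilon_j^\tau = \nabla_\ell f_j(\bx_{(j,:)}^{\tau+1}) - \nabla_\ell f_j(\bx_{(j,:)}^\tau)$. The crucial point is that, since all iterates lie in $\KK$ and $\nabla f_j$ is bounded on $\KK$, we have the \emph{uniform} bound $\|\bepsilon_j^\tau\| \le 2M_0$. Invoking \eqref{eq:push-sum evolution bound},
\begin{align*}
\|\by_{(i,\ell)}^{t+1} - \avggrad_\ell^{t+1}\| \le c_1 \rho^t + c_2 \sum_{\tau=1}^t \rho^{t-\tau}\cdot 2M_0 N \sqrt{d} \le c_1 + \frac{2c_2 M_0 N \sqrt{d}}{1-\rho} \triangleq C_1,
\end{align*}
where the factor $\sqrt{d}$ accounts for the norm-equivalence $\|\cdot\|_1 \le \sqrt{d}\,\|\cdot\|$ in $\real^d$. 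This proves \eqref{eq:y_bound}.

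For \eqref{eq:x_bound}, note that $\Deltax_{(i,\ell)}^t = \0$ trivially satisfies the bound when $\ell \ne \ell_i^t$; otherwise, \eqref{eq:alg_local_min} is an instance of \eqref{eq:technicalities} with $h = \surr_{i,\ell}(\,\cdot\,;\bx_{(i,:)}^t,\by_{(i,\ell)}^t)$, which is $\tau_i$-strongly convex by Assumption~\ref{ass:surrogate}(1), and $\bv = \bx_{(i,\ell)}^t$. By Assumption~\ref{ass:surrogate}(2) together with the definition of $\surr_{i,\ell}$, one has $\nabla h(\bx_{(i,\ell)}^t) = \nabla \tf_i(\bx_{(i,\ell)}^t;\bx_{(i,:)}^t) + (N\by_{(i,\ell)}^t - \nabla_\ell f_i(\bx_{(i,:)}^t)) = N\by_{(i,\ell)}^t$. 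Hence, Lemma~\ref{lem:strongly_convex_minimization}(i) together with the uniform bound on $\subgrad r_\ell$ (Assumption~\ref{ass:cost_functions}(iv)) yields
\begin{align*}
\|\Deltax_{(i,\ell)}^t\| \le \frac{1}{\tau_i}\bigl(N\|\by_{(i,\ell)}^t\| + G\bigr) \le \frac{1}{\tau_i}\bigl(N(C_1 + M_0) + G\bigr) \triangleq C_2,
\end{align*}
with $G$ the subgradient bound, completing the proof. The only subtle point worth flagging is the step-size-independence of the bound, which hinges precisely on the uniform (as opposed to summable) boundedness of the perturbations in the $\by$-recursion; this is what forces the argument to route through feasibility of $\bx_{(i,:)}^t$ and the $\KK$-boundedness of $\nabla f_i$ rather than through a Lipschitz-type estimate involving $\|\bx_{(i,:)}^{t+1} - \bx_{(i,:)}^t\|$, which would be circular at this stage.
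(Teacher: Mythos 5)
Your proof is correct and follows essentially the same route as the paper: bound $\|\by_{(i,\ell)}^t-\avggrad_\ell^t\|$ via the perturbed push-sum estimate \eqref{eq:push-sum evolution bound} with the gradient-difference perturbations uniformly bounded by the $\KK$-boundedness of $\nabla f_i$, then bound $\|\Deltax_{(i,\ell)}^t\|$ via Lemma~\ref{lem:strongly_convex_minimization}(i), the identity $\nabla\surr_{i,\ell}(\bx_{(i,\ell)}^t;\cdot)=N\by_{(i,\ell)}^t$, the conservation property \eqref{eq:gradient_sum_conservation}, and the subgradient bound. Your explicit induction showing $\bx_{(i,:)}^t\in\KK$ (convex-combination structure of \eqref{eq:alg_x_consenus}) is a welcome detail the paper leaves implicit, but it does not change the argument.
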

\begin{proof}
  We prove \eqref{eq:y_bound}. %
  Note that the gradient tracking in~\eqref{eq:alg_phi_update}, \eqref{eq:alg_sigma_update} 
  and \eqref{eq:alg_y_update} is an instance of the  perturbed push-sum algorithm (\ref{eq:perturbed_push-sum}), with 
  $
    \bepsilon_i^t \triangleq  \nabla_\ell f_i \big( \bx_{(i,:)}^{t+1} \big)
    \!-\!
    \nabla_\ell f_i \big( \bx_{(i,:)}^{t} \big)$.
   By Lemma~\ref{lem:perturbed_pushsum} 
  (cf. eq.~\eqref{eq:push-sum evolution bound}), we have
  \begin{align*}
  \begin{split}
		& \Big \| 
		  \by_{(i,\ell)}^t - \avggrad_\ell^{t}
		\Big \|
		\\
		&\le
		c_1 (\rho)^{t-1}  + \smallsum_{\tau=1}^{t-1} (\rho)^{t-1-\tau} 
		\smallsum_{i=1}^N \left\|
		  \nabla_\ell f_i (\bx_{(i,:)}^{\tau+1} )
		  \!-\!
      \nabla_\ell f_i (\bx_{(i,:)}^\tau)
		\right\|_1 
		\\
		&	\le	
		c_1 (\rho)^{t-1}  + (2 N\,\sqrt{d}\, B_F)\, \smallsum_{\tau=1}^{t-1} (\rho)^{t-1-\tau}, 
  \end{split}
\end{align*} 
where $B_F \triangleq \sum_{i =1}^N B_{f_i}$ [cf.~Assumption~\ref{ass:cost_functions}(iii)]. 
The above inequality proves~\eqref{eq:y_bound}.

\noindent
We prove now \eqref{eq:x_bound}. 
Consider the case $\ell = \ell_i^t$ [for $\ell \neq \ell_i^t$, $\Deltax_{(i,\ell)}^t=0$, 
trivially implying \eqref{eq:x_bound}].
Invoking Lemma~\ref{lem:strongly_convex_minimization}, with the following 
identifications: $\tildex=\tildex_{(i,\ell)}^t$,
$h(\bullet)=\surr_{i,\ell_i^t}(\bullet; \bx_{(i,:)}^t, \by_{(i,\ell_i^t)}^{t})$, 
$r(\bullet)= r_{\ell_i^t}(\bullet)$, and $\KK= \KK_{\ell_i^t}$, yields   
\begin{align*}
  \left\| \Deltax_{(i,\ell_i^t)}^t \right\|  
  \le \frac{N}{\tau_i }
  \left\| \by_{(i,\ell_i^t)}^t \right\| + \frac{B_{\reg}}{\tau_i},
\end{align*}
where we used the fact that i) 
$\nabla \surr_{i,\ell_i^t}(\bx_{(i,\ell_i^t)}^t; \bx_{(i,:)}^t, \by_{(i,\ell_i^t)}^{t}) = N\by_{(i,\ell)}^t$ 
(cf.~Assumption~\ref{ass:surrogate}); 
and ii) $\|\subgrad r_{\ell_i^t}(\bx_{(i,\ell_i^t)}^t)\|\leq B_{\reg}$ [cf.~Assumption~\ref{ass:cost_functions}(iv)].
By adding and subtracting  $\avggrad_{(i,\ell_i^t)}^t$ in $\|\by_{(i,\ell_i^t)}^t\|$ and using triangle 
inequality we can bound $ \| \Deltax_{(i,\ell_i^t)}^t \|$ as
\begin{align*}
  \| \Deltax_{(i,\ell_i^t)}^t \|  
    &  
    \le 
    \frac{N}{\tau_i }
    \left\| \by_{(i,\ell_i^t)}^t - \avggrad_{(i,\ell_i^t)}^t  \right\| 
    +
     \frac{N}{\tau_i }
    \left\|
      \avggrad_{(i,\ell_i^t)}^t
     \right\|
    + \frac{B_{\reg}}{\tau_i}.
    \\
    & \stackrel{(a)}{\le}
    \frac{N}{\tau_i }
    \smallsum_{\ell=1}^B
    \left\| \by_{(i,\ell)}^t - \avggrad_{(i,\ell)}^t  \right\|
        \\
    & \hspace{1.5em}
    +
        \frac{1}{\tau_i }
    \Big \|
      \smallsum_{i=1}^N \nabla_{\ell_i^t} f_i ( \bx_{(i,:)}^t ) 
    \Big \|
    + \frac{B_{\reg}}{\tau_i}
    \\
    &
    \stackrel{(b)}{\le}
        \dfrac{N}{\tau_i }
    \smallsum_{\ell=1}^B
    \left \| \by_{(i,\ell)}^t - \avggrad_{(i,\ell)}^t\right \| 
    +
        \frac{N}{\tau_i }\cdot B_F
    + \dfrac{B_{\reg}}{\tau_i}
    \\
        &
    \stackrel{(c)}{\le}
        \dfrac{N}{\tau_i } \cdot B \cdot C_1
    +
        \dfrac{N}{\tau_i }\cdot B_F
    + \dfrac{B_{\reg}}{\tau_i} \triangleq C_2 <\infty,
\end{align*}
where in (a) we used~\eqref{eq:gradient_sum_conservation}; 
(b) follows from the boundedness of $\nabla f_i$
[cf.~Assumption~\ref{ass:cost_functions}(iii)]; 
and (c) comes from~\eqref{eq:y_bound}.
\end{proof}

We are now ready to characterize the dynamics of the consensus error, as given below. %
\begin{proposition} 
\label{prop:consensus}
  Consider problem~\eqref{eq:problem}  under Assumptions~\ref{ass:cost_functions}, 
  \ref{ass:strong_conn}, \ref{ass:col_stoch}, \ref{ass:surrogate}, \ref{ass:step-size}. 
  Let $\{(\bx_{(i,:)}^t)_{i=1}^N \}_{t\ge 0}$ and $\{ (\bs_{(i,:)}^t)_{i=1}^N \}_{t\ge 0}$ be 
  generated by \algname/. 
  Then, the decision variables $\bx_{ (i,:) }^t$ are asymptotically consensual to $\avgdec^t$:
  \begin{align}
    \lim_{t\to\infty} \| \bx_{ (i,:) }^t - \avgdec^t \| = 0,
    \label{eq:consenus_agreement}
  \end{align}
  for all $i \in\until{N}$. 
    Furthermore, the following hold:
  \begin{align}
    & \smallsum_{t=0}^\infty \gamma^t \| \bx_{ (i,:) }^t - \avgdec^t \| < \infty,
    \label{eq:summable_err}
    \\
    & \smallsum_{t=0}^\infty \| \bx_{ (i,:) }^t - \avgdec^t \|^2 < \infty.
    \label{eq:squared_summable_err} 
  \end{align}
\end{proposition}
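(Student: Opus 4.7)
The plan is to recognize that, for each block $\ell$, the $\bx$-update \eqref{eq:alg_phi_update}--\eqref{eq:alg_x_update} is exactly an instance of the perturbed push-sum protocol \eqref{eq:perturbed_push-sum} running on the time-varying digraph $\GG_\ell^t$, with perturbation $\bepsilon_{j}^t = \gamma^t \phi_{(j,\ell)}^t \Deltax_{(j,\ell)}^t$. Proposition~\ref{prop:induced_graph_connectivity} ensures $T$-strong connectivity of $\{\GG_\ell^t\}$, and the weights \eqref{eq:weights} satisfy Assumption~\ref{ass:col_stoch_A}, so Lemma~\ref{lem:perturbed_pushsum} applies block-by-block. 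Combining the column-stochasticity of $\bA_\ell^t$ (which yields $\phi_{(j,\ell)}^t \le N$) with the uniform bound \eqref{eq:x_bound} from Lemma~\ref{lem:bounds_x_y}, I obtain $\|\bepsilon_j^t\| \le M \gamma^t$ for a constant $M$ independent of $t$, $j$, $\ell$.

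Applying \eqref{eq:push-sum evolution bound} block-wise and summing over $\ell \in \until{B}$, I would then derive
\begin{align*}
  \| \bx_{(i,:)}^{t} - \avgdec^t \| \le \tilde c_1 \rho^{t-1} + \tilde c_2 \sum_{\tau=1}^{t-1} \rho^{t-1-\tau} \gamma^\tau,
\end{align*}
for suitable constants $\tilde c_1, \tilde c_2 > 0$ and $\rho\in(0,1)$. The first term vanishes geometrically; for the convolution term, since $\gamma^\tau \to 0$ (Assumption~\ref{ass:step-size}) and $\{\rho^k\}$ is summable, a standard argument (split the tail of $\tau$ and use that $\sum_k \rho^k < \infty$) yields \eqref{eq:consenus_agreement}.

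For \eqref{eq:summable_err}, I multiply the bound above by $\gamma^t$ and sum. The contribution $\sum_t \gamma^t \rho^{t-1}$ is finite since $\gamma^t$ is bounded. For the cross term, swapping the order of summation and using that $\gamma^t \le \gamma^\tau$ for $t \ge \tau$ (Assumption~\ref{ass:step-size}(i)) gives
\begin{align*}
  \smallsum_{t}\! \gamma^t \!\!\smallsum_{\tau=1}^{t-1}\!\! \rho^{t-1-\tau} \gamma^\tau
  \!=\! \smallsum_\tau \gamma^\tau \!\!\!\smallsum_{t \ge \tau+1} \!\!\!\gamma^t \rho^{t-1-\tau}
  \!\le\! \tfrac{1}{1-\rho} \smallsum_\tau (\gamma^\tau)^2 \!<\! \infty,
\end{align*}
by Assumption~\ref{ass:step-size}(ii). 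For \eqref{eq:squared_summable_err}, I use $(a+b)^2 \le 2a^2+2b^2$; the geometric part is trivially summable, and for the convolution squared, I apply Cauchy–Schwarz in the form
\begin{align*}
  \Big(\smallsum_\tau \rho^{t-1-\tau} \gamma^\tau\Big)^{\!2} \!\le\! \Big(\smallsum_\tau \rho^{t-1-\tau}\Big)\!\Big(\smallsum_\tau \rho^{t-1-\tau} (\gamma^\tau)^2\Big),
\end{align*}
and then swap summations to reduce to a constant multiple of $\sum_\tau (\gamma^\tau)^2 < \infty$.

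The main obstacle I expect is not a single deep step but rather the bookkeeping of the three constants $\tilde c_1, \tilde c_2, M$ across blocks, together with the need to justify uniform (in $\ell, t$) applicability of Lemma~\ref{lem:perturbed_pushsum}: the weight sequence $\{\bA_\ell^t\}_t$ has a uniform lower bound $\kappa$ and the induced digraphs $\{\GG_\ell^t\}_t$ are $T$-strongly connected for the \emph{same} $T$ across all $\ell$ (by Proposition~\ref{prop:induced_graph_connectivity}), which is precisely what ensures that the contraction rate $\rho$ and the prefactors $c_1, c_2$ can be chosen independently of $\ell$. Once this uniformity is in place, \eqref{eq:consenus_agreement}–\eqref{eq:squared_summable_err} follow from the above telescoping/Fubini manipulations using only the step-size hypotheses in Assumption~\ref{ass:step-size}.
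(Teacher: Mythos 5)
Your proposal is correct and follows essentially the same route as the paper: identify the $\bx$-update, block by block, as a perturbed push-sum instance with $\bepsilon_{i,\ell}^t=\gamma^t\phi_{(i,\ell)}^t\Deltax_{(i,\ell)}^t$, invoke Lemma~\ref{lem:bounds_x_y} for uniform boundedness of $\Deltax_{(i,\ell)}^t$, and then use the push-sum bound \eqref{eq:push-sum evolution bound} together with Assumption~\ref{ass:step-size}. The only difference is that you prove the two summability facts directly (Fubini swap with step-size monotonicity, and Cauchy--Schwarz on the geometric convolution) where the paper cites auxiliary lemmas from \cite{nedic2010constrained} and \cite{dilorenzo2016next}; your in-line arguments are valid and make the proof self-contained.
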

\begin{proof}
  It is sufficient to prove \eqref{eq:consenus_agreement}--\eqref{eq:squared_summable_err} 
  for each block $\ell$. 

  Notice that the  evolution of $\bx_{(:,\ell)}^{t}$ [\eqref{eq:alg_phi_update}--\eqref{eq:alg_x_update}] follows the 
  dynamics of the perturbed push-sum algorithm~\eqref{eq:perturbed_push-sum}, under the following identification:
  $n=d$, $\psi_i^t \triangleq \phi_{(i,\ell)}^t$, 
  $\boldeta_i^t \triangleq \bs_{(i,\ell)}^t$, $\bz_i^t \triangleq \bx_{(i,\ell)}^t$, and 
  $\bepsilon_{i,\ell}^t \triangleq  \gamma^t \phi_{(i,\ell)}^t \Deltax_{(i,\ell)}^t$. 
  By Lemma~\ref{lem:bounds_x_y} [cf.~\eqref{eq:x_bound}] and
  $\gamma^t\downarrow 0$, we infer 
  $\lim_{t\to \infty}\bepsilon_{i,\ell}^t = \gamma^t \phi_{(i,\ell)}^t \Deltax_{(i,\ell)}^t = 0$.
  Invoking Lemma~\ref{lem:perturbed_pushsum}(2), we conclude  
  $\lim_{t\to\infty} \| \bx_{(i,\ell)}^{t} - \avgdec_\ell^{t} \| = 0$,
  which proves~\eqref{eq:consenus_agreement}.
  We prove now~\eqref{eq:summable_err}. Using again the aforementioned connection 
  with the perturbed push-sum algorithm~\eqref{eq:perturbed_push-sum}, we can 
  invoke Lemma~\ref{lem:perturbed_pushsum}(1) [cf.~\eqref{eq:push-sum evolution bound}] 
  and write
  \begin{align} \label{eq:summable_err_proof}
  \begin{split}
    & \smallsum_{t=0}^{\infty}
    \gamma^{t+1}\norm{\bx_{\ell}^{t+1} - \avgdec_\ell^{t+1}}
    \\
    & \le
    \smallsum_{t=0}^{\infty}\gamma^{t+1} 
     \Big( 
       c_1\, (\rho)^{t} \! + c_2\, \smallsum_{\tau=1}^{t} (\rho)^{t-\tau} \gamma^\tau \| \Deltax_{(:,\ell)}^t \|_1 
     \Big) 
     \\
      & 	\stackrel{(a) }{\le}
	\smallsum_{t=0}^{\infty} \gamma^{t+1} \!
     \Big( 
	 c_1 \,(\rho)^{t} + c_3 \,\smallsum_{\tau=1}^{t} (\rho)^{t-\tau} \gamma^\tau \,
	 \Big)
     \stackrel{(b) }{<}\infty,
    \end{split}
  \end{align}
  for some finite, positive scalars $c_1, c_2$, and $c_3$, where (a) follows from the boundedness 
  of $\| \Deltax_{(:,:)}^t \|_1 $ [cf. Lemma~\ref{lem:bounds_x_y}]; and (b) is 
  due to~\cite[Lemma~7]{nedic2010constrained}.

Finally, to prove~\eqref{eq:squared_summable_err}, we use the same bound of $\norm{\bx_\ell^{t+1} - \avgdec_\ell^{t+1}}$ as in~\eqref{eq:summable_err_proof}, and write
\begin{align*}
\begin{split}
	& \smallsum_{t=0}^{\infty}\norm{\bx_\ell^{t+1} - \avgdec_\ell^{t+1}}^2
	\\	
	&\leq  \smallsum_{t=0}^{\infty} \Big(
	c_1^2 (\rho)^{2t} +
	c_3^2 
	\smallsum_{\tau=0}^{t} 
	\smallsum_{s=0}^{t}\gamma^\tau \gamma^s (\rho)^{t-\tau} (\rho)^{t-s}
	\\
	& \hspace{3.0cm}
	+ 2 c_1c_3 
	\smallsum_{\tau=0}^{t}\gamma^\tau (\rho)^{t-\tau} (\rho)^{t} \Big)
	\stackrel{(a)}{<}
	\infty,
\end{split}
\end{align*}
where (a) follows from~\cite[Lemma~7]{dilorenzo2016next}.%
\end{proof}

\subsubsection{Asymptotic tracking} 
We conclude this section studying the dynamics of the gradient tracking scheme.%
\begin{proposition}
\label{prop:tracker_convergence}
  Consider problem~\eqref{eq:problem} under Assumptions~\ref{ass:cost_functions}, 
  \ref{ass:strong_conn}, \ref{ass:col_stoch}, \ref{ass:surrogate}, \ref{ass:step-size}. 
  Let $\{ (\by_{(i,:)}^t)_{i=1}^N\}_{t\geq 0}$ be the sequence 
  generated by \algname/.
  Then, $\by_{(i,:)}^t$ tracks the average of the gradients 
  $\sum_{j=1}^N \nabla_\ell f_j (\bx_{(j,:)}^t )$ asymptotically:
  \begin{align}
  \label{eq:tracker_convergence}
    \lim_{t\to \infty} 
    \Big \| \by_{(i,:) }^t - \frac{1}{N} \smallsum_{j=1}^N \nabla f_j (\bx_{(j,:)}^t ) \Big\| 
    = 0,
  \end{align}
  for all $i \in\until{N}$. 
  Furthermore, the following holds:
  \begin{align}
    \label{eq:tracker_summable_err}
    \smallsum_{t=0}^\infty \gamma^t  \Big \| \by_{(i,:) }^t 
    - \dfrac{1}{N} \smallsum_{j=1}^N \nabla f_j (\bx_{(j,:)}^t ) \Big\| < \infty.
  \end{align}%
\end{proposition}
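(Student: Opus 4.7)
The plan is to recognize the $\by$-update \eqref{eq:alg_phi_update}, \eqref{eq:alg_sigma_update}, \eqref{eq:alg_y_update} as an instance of the perturbed push-sum algorithm \eqref{eq:perturbed_push-sum}, with the driving perturbation at iteration $\tau$ given by $\bepsilon_{(i,\ell)}^\tau = \nabla_\ell f_i(\bx_{(i,:)}^{\tau+1}) - \nabla_\ell f_i(\bx_{(i,:)}^\tau)$. Combined with \eqref{eq:gradient_sum_conservation}, which ensures that the block-wise average $\avggrad^t$ equals $\frac{1}{N}\sum_j \nabla f_j(\bx_{(j,:)}^t)$, it therefore suffices to establish both $\lim_t \|\by_{(i,:)}^t - \avggrad^t\| = 0$ and the summability $\sum_t \gamma^t \|\by_{(i,:)}^t - \avggrad^t\| < \infty$.

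For the first claim, I would invoke Lemma~\ref{lem:perturbed_pushsum}(2), whose hypothesis is that the perturbations vanish. Using the Lipschitz continuity of $\nabla f_i$ from Assumption~\ref{ass:cost_functions}(iii), one has $\|\bepsilon_{(i,\ell)}^\tau\| \leq L_i \|\bx_{(i,:)}^{\tau+1} - \bx_{(i,:)}^\tau\|$, so the task reduces to proving $\|\bx_{(i,:)}^{\tau+1} - \bx_{(i,:)}^\tau\| \to 0$. I would do this by inserting $\avgdec^{\tau+1}$ and $\avgdec^\tau$ and applying triangle inequality: the two consensus-error terms vanish by Proposition~\ref{prop:consensus}, and the middle term $\|\avgdec^{\tau+1} - \avgdec^\tau\|$ is bounded by $\gamma^\tau \cdot N \cdot C_2$ through \eqref{eq:avg_dec_block_evolution}, the uniform bound on $\phi_{(i,\ell)}^t$, and the uniform bound \eqref{eq:x_bound} on $\Deltax_{(i,\ell)}^t$ from Lemma~\ref{lem:bounds_x_y}; it therefore vanishes as $\gamma^\tau \downarrow 0$.

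For \eqref{eq:tracker_summable_err}, I would use the quantitative estimate \eqref{eq:push-sum evolution bound} from Lemma~\ref{lem:perturbed_pushsum}(1), which yields
\begin{align*}
\|\by_{(i,:)}^t - \avggrad^t\| \leq c_1 \rho^{t-1} + c_2 \smallsum_{\tau=1}^{t-1} \rho^{t-1-\tau} \smallsum_{j=1}^N \|\bepsilon_{(j,\cdot)}^\tau\|.
\end{align*}
The Lipschitz bound on $\bepsilon_{(j,\cdot)}^\tau$ combined with the triangle inequality argument above gives $\|\bepsilon_{(j,\cdot)}^\tau\| \leq L (\|\bx_{(j,:)}^{\tau+1} - \avgdec^{\tau+1}\| + \|\bx_{(j,:)}^\tau - \avgdec^\tau\| + \gamma^\tau \cdot c)$. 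Multiplying through by $\gamma^t$ and summing over $t$, the geometric-decay term yields a finite contribution, while the double sum is handled by swapping the order of summation: exploiting monotonicity $\gamma^t \leq \gamma^\tau$ for $t\geq \tau$ (Assumption~\ref{ass:step-size}(i)), one has $\sum_{t>\tau} \gamma^t \rho^{t-1-\tau} \leq \gamma^\tau/(1-\rho)$, so the sums reduce to $\sum_\tau (\gamma^\tau)^2$ and $\sum_\tau \gamma^\tau \|\bx_{(j,:)}^\tau - \avgdec^\tau\|$, both of which are finite by Assumption~\ref{ass:step-size}(ii) and by \eqref{eq:summable_err}, respectively.

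The main delicate point is the coupling between the $\bx$- and $\by$-dynamics: one cannot apply the vanishing-perturbation arguments to the $\by$-dynamics in isolation, because the perturbation depends on consecutive $\bx$-iterates. The key insight that resolves this is that Proposition~\ref{prop:consensus} has already provided both the pointwise consensus of $\bx_{(i,:)}^t$ to $\avgdec^t$ and the summability of $\gamma^t$-weighted consensus errors, which together with the $O(\gamma^t)$ displacement of $\avgdec^t$ (from boundedness of $\Deltax$) provide exactly the estimates needed to close the argument.
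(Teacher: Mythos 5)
Your proposal is correct and follows essentially the same route as the paper: identify the $\by$-dynamics as a block-wise perturbed push-sum with perturbation $\nabla_\ell f_i(\bx_{(i,:)}^{t+1})-\nabla_\ell f_i(\bx_{(i,:)}^{t})$, use Lipschitz continuity plus the triangle inequality through $\avgdec^{t+1},\avgdec^t$ together with Proposition~\ref{prop:consensus}, Lemma~\ref{lem:bounds_x_y} and $\gamma^t\downarrow 0$ to get vanishing perturbations and hence \eqref{eq:tracker_convergence} via Lemma~\ref{lem:perturbed_pushsum} and \eqref{eq:gradient_sum_conservation}, and then the quantitative bound \eqref{eq:push-sum evolution bound} with \eqref{eq:summable_err} and $\sum_t(\gamma^t)^2<\infty$ for \eqref{eq:tracker_summable_err}. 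The only cosmetic difference is that you make the final summability explicit by swapping the order of summation and using the monotonicity of $\gamma^t$, whereas the paper delegates exactly this step to an auxiliary lemma of \cite{dilorenzo2016next}.
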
%
\begin{proof}
  It is sufficient to prove~\eqref{eq:tracker_convergence} and~\eqref{eq:tracker_summable_err} 
  for each block $\ell$. 
  Notice that the gradient tracking scheme given by \eqref{eq:alg_phi_update}, 
  \eqref{eq:alg_sigma_update} and \eqref{eq:alg_y_update} is an instance of the 
  perturbed push-sum consensus~\eqref{eq:perturbed_push-sum}, with the 
  identifications: $n=d$, $\psi_i^t \triangleq \phi_{(i,\ell)}^t$, 
  $\boldeta_i^t \triangleq \bsigma_{(i,\ell)}^t$, 
  $\bz_i^t \triangleq \by_{(i,\ell)}^t$, and 
  $\bepsilon_{i,\ell}^t \triangleq \nabla_\ell f_i (\bx_{(i,:)}^{t+1} )
  \!-\!
  \nabla_\ell f_i (\bx_{(i,:)}^t)$.
  Therefore, \eqref{eq:tracker_convergence} follows readily from 
  Lemma~\ref{lem:perturbed_pushsum}(2) and \eqref{eq:gradient_sum_conservation}, 
  once we have shown
  $\lim_{t \to \infty} \|\bepsilon_{i,\ell}^t\|=0 $, as proven next.

  Since each $\nabla_\ell f_i$  is Lipschitz continuous [cf.~Assumption~\ref{ass:surrogate}(iii)], it suffices to prove $\lim_{t\to\infty} \| \bx_{(i,:)}^{t+1} - \bx_{(i,:)}^{t} \big \| = 0$.
We have: 
  \begin{align}
    \begin{split}
    \left\| \bx_{(i,:)}^{t+1} - \bx_{(i,:)}^{t}   \right\|
     \stackrel{(a)}{\le} & \,
   \left \| \bx_{(i,:)}^{t+1} - \avgdec^{t+1} \right\|
    +    
  \left  \| \bx_{(i,:)}^{t} - \avgdec^{t} \right \|    
    \\
     & + \frac{1}{N}  \smallsum_{\ell=1}^B\smallsum_{i=1}^N \left\|  \gamma^t
     \phi_{(i,\ell)}^t \Deltax_{(i,\ell)}^t  \right \|
    \\
     \stackrel{(b)}{\le} &\,
  \left  \| \bx_{(i,:)}^{t+1} - \avgdec^{t+1} \right \|
    +    
  \left  \| \bx_{(i,:)}^{t} - \avgdec^{t} \right \|
    \\
    & +
    \gamma^t 
    \smallsum_{\ell=1}^B
    \smallsum_{i=1}^N  \left\| \Deltax_{(i,\ell)}^t\right\|,
    \end{split}
    \label{eq:bounded_average_evolution}
  \end{align}
  where  in (a) we used \eqref{eq:avg_dec_block_evolution} while  (b) follows from $\phi_{(i,\ell)}^t \le N$.
 The desired result,   $\lim_{t\to\infty} \| \bx_{(i,:)}^{t+1} - \bx_{(i,:)}^{t} \big \| = 0$, follows readily from \eqref{eq:bounded_average_evolution},  Proposition~\ref{prop:consensus} [cf. eq.~\eqref{eq:consenus_agreement}],  
  Lemma~\ref{lem:bounds_x_y}(2), and   $\gamma^t\downarrow 0$ [cf.~Assumption~\ref{ass:step-size}].	
	
We prove now~\eqref{eq:tracker_summable_err}. Invoking Lemma~\ref{lem:perturbed_pushsum}(2), 
we can write
\begin{align*}
  & \smallsum_{t=0}^\infty \gamma^{t+1}
  \Big \| 
      \by_{(i,\ell) }^{t+1} \!-\! \dfrac{1}{N} \smallsum_{j=1}^N \nabla_\ell f_j (\bx_{(j,:)}^{t+1} ) 
  \Big \| 
  \\
  & =
   \smallsum_{t=0}^\infty \gamma^{t+1}
   \left \|
      \by_{(i,\ell) }^{t+1} \!-\! \avggrad_\ell^{t+1}
    \right \|
  \\
  & \le   
    \smallsum_{t=0}^\infty \gamma^{t+1}\Big(c_1 ( \rho)^{t} \\
     & \hspace{3em}+ c_2 \smallsum_{\tau=1}^{t} (\rho)^{t-\tau} 
		\smallsum_{i=1}^N 
		\left \|
		  \nabla_\ell f_i (\bx_{(i,:)}^{\tau+1} )
		  \!-\!
      \nabla_\ell f_i (\bx_{(i,:)}^\tau)
    \right \|\Big)
    \\
		&\le  
		\smallsum_{t=0}^\infty \gamma^{t+1} \Big( c_1 ( \rho)^{t} 
		+ c_4 \smallsum_{\tau=1}^{t} (\rho)^{t-\tau} 
		\smallsum_{i=1}^N
		\big \| 
		  \bx_{(i,:)}^{\tau+1} - \bx_{(i,:)}^{\tau}
		\big \|\Big)
  \\
  & \stackrel{\eqref{eq:bounded_average_evolution}}{\le}   
  c_1 \smallsum_{t=0}^\infty \gamma^{t+1}( \rho)^{t} 
  + 
  c_4 \smallsum_{t=0}^\infty \gamma^{t+1}\smallsum_{\tau=1}^{t} (\rho)^{t-\tau} 
  \smallsum_{i=1}^N
	  \| \bx_{(i,:)}^{\tau+1} - \avgdec^{\tau+1} \big \|
  \\
  &
  +    
  c_4 \smallsum_{t=0}^\infty \gamma^{t+1}\smallsum_{\tau=1}^{t} (\rho)^{t-\tau} 
  \smallsum_{i=1}^N\| \bx_{(i,:)}^{\tau} - \avgdec^{\tau} \big \|
  \\     
  &     
  +
  c_5 \smallsum_{t=0}^\infty \gamma^{t+1}\smallsum_{\tau=1}^{t} (\rho)^{t-\tau} 
  \gamma^\tau 
  \stackrel{(a)}{<} \infty,
	\end{align*}
  for some positive, finite scalars $c_4$ and $c_5$, where (a) follows 
  from~\cite[Lemma~7]{dilorenzo2016next}.   
\end{proof}

\subsection{Lyapunov Function and its Descent Property}
\label{sec:opt}
We begin introducing  the following lemma  that is instrumental for the rest of the proof.

\begin{lemma}
\label{prop:descent_on_reg}
  Consider problem~\eqref{eq:problem}  under Assumptions~\ref{ass:cost_functions}, 
  \ref{ass:strong_conn}, \ref{ass:col_stoch}, \ref{ass:surrogate}, \ref{ass:step-size}; and let  
  $\{ \phi_{(i,:)}^t \}_{t\ge 0}$ and $\{ \bx_{(i,:)}^t \}_{t\ge 0}$ be 
  the sequences generated by \algname/. Then,  for all $\ell \in \until{B}$, 
  it holds
  \begin{align*}
\begin{split}
  & 
  \smallsum_{i=1}^N \phi_{(i,\ell)}^{t+1}\,
  \reg_\ell (\bx_{(i,\ell)}^{t+1}) 
  - 
 \smallsum_{i=1}^N \phi_{(i,\ell)}^{t}\,
  \reg_\ell (\bx_{(i,\ell)}^{t}) 
  \\
  &
  \le
  \gamma^t
  \dfrac{1}{N}
  \smallsum_{i=1}^N
  \phi_{(i,\ell)}^{t} 
  \Big(
  \reg_{\ell} (\tildex_{(i,\ell ) }^t) 
  -
  \reg_{\ell} (\bx_{(i,\ell ) }^t)
  \Big).
\end{split}%
\end{align*}
\end{lemma}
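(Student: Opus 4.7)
The plan is a clean two-step application of Jensen's inequality on $r_\ell$, bridged by the column stochasticity of $\bA_\ell^t$. First I would observe that the averaging update \eqref{eq:alg_x_consenus} expresses $\bx_{(i,\ell)}^{t+1}$ as a \emph{convex combination}
\begin{align*}
\bx_{(i,\ell)}^{t+1} = \sum_{j \in \innbrs_{i,\ell}^t} w_{ij\ell}^t \big(\bx_{(j,\ell)}^t + \gamma^t \Deltax_{(j,\ell)}^t\big),
\qquad w_{ij\ell}^t \triangleq \frac{a_{ij\ell}^t \phi_{(j,\ell)}^t}{\phi_{(i,\ell)}^{t+1}},
\end{align*}
whose weights sum to one exactly by definition of $\phi_{(i,\ell)}^{t+1}$ in \eqref{eq:alg_weights}. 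Since $r_\ell$ is convex (Assumption~\ref{ass:cost_functions}(iv)), Jensen gives
\begin{align*}
\phi_{(i,\ell)}^{t+1} r_\ell\big(\bx_{(i,\ell)}^{t+1}\big) \le \sum_{j \in \innbrs_{i,\ell}^t} a_{ij\ell}^t \phi_{(j,\ell)}^t\, r_\ell\big(\bx_{(j,\ell)}^t + \gamma^t \Deltax_{(j,\ell)}^t\big).
\end{align*}

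Next I would sum over $i \in \until{N}$ and swap the order of summation. Column stochasticity of $\bA_\ell^t$ (Assumption~\ref{ass:col_stoch}(b)), i.e.~$\sum_{i=1}^N a_{ij\ell}^t = 1$, then eliminates the edge weights and yields
\begin{align*}
\sum_{i=1}^N \phi_{(i,\ell)}^{t+1}\, r_\ell\big(\bx_{(i,\ell)}^{t+1}\big) \le \sum_{j=1}^N \phi_{(j,\ell)}^t\, r_\ell\big(\bx_{(j,\ell)}^t + \gamma^t \Deltax_{(j,\ell)}^t\big).
\end{align*}
This is where the network structure disappears entirely from the bound; everything afterwards is per-agent.

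Finally I would apply convexity of $r_\ell$ a second time to each summand on the right. Adopting the natural convention $\tildex_{(j,\ell)}^t \triangleq \bx_{(j,\ell)}^t$ when $\ell \neq \ell_j^t$ (so that $\Deltax_{(j,\ell)}^t = \tildex_{(j,\ell)}^t - \bx_{(j,\ell)}^t$ uniformly in $\ell$, consistent with \eqref{eq:alg_descent_dir}), and recalling $\gamma^t \in (0,1]$ from Assumption~\ref{ass:step-size}, one has
\begin{align*}
\bx_{(j,\ell)}^t + \gamma^t \Deltax_{(j,\ell)}^t = (1-\gamma^t) \bx_{(j,\ell)}^t + \gamma^t \tildex_{(j,\ell)}^t,
\end{align*}
again a convex combination, so that $r_\ell(\bx_{(j,\ell)}^t + \gamma^t \Deltax_{(j,\ell)}^t) \le (1-\gamma^t) r_\ell(\bx_{(j,\ell)}^t) + \gamma^t r_\ell(\tildex_{(j,\ell)}^t)$. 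Substituting into the display above and moving $\sum_j \phi_{(j,\ell)}^t r_\ell(\bx_{(j,\ell)}^t)$ to the left-hand side produces a bound of exactly the claimed form (up to the normalizing constant on the right).

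I do not anticipate any real obstacle: no graph-connectivity property, no essential-cyclicity, no step-size summability, and no use of the gradient tracker enter the argument---only convexity of $r_\ell$, the convex-combination structure of the push-sum update, and column stochasticity of $\bA_\ell^t$. The one point worth stating explicitly is the extension of $\tildex_{(j,\ell)}^t$ to non-selected blocks, since then the bracket $r_\ell(\tildex_{(j,\ell)}^t)-r_\ell(\bx_{(j,\ell)}^t)$ vanishes for every agent $j$ with $\ell_j^t \neq \ell$, and the right-hand sum is effectively restricted to the agents that actually updated block $\ell$ at time $t$.
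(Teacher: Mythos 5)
Your proof is correct and is essentially the paper's own argument: the paper's proof of this lemma is the one-line remark that it ``follows readily from the convexity of $r_\ell$ and the column stochasticity of $\bA_\ell^t$,'' and you simply spell out those two ingredients (Jensen on the push-sum convex combination with weights $a_{ij\ell}^t\phi_{(j,\ell)}^t/\phi_{(i,\ell)}^{t+1}$, summation over $i$ with $\1^\top\bA_\ell^t=\1^\top$, and a second convexity step for the $\gamma^t$-combination, with the natural convention $\tildex_{(j,\ell)}^t=\bx_{(j,\ell)}^t$ on non-selected blocks). One remark on your closing caveat: your derivation yields the right-hand side $\gamma^t\sum_{i}\phi_{(i,\ell)}^t\big(r_\ell(\tildex_{(i,\ell)}^t)-r_\ell(\bx_{(i,\ell)}^t)\big)$ \emph{without} the $1/N$, and since that sum need not be nonnegative the stated $1/N$-version is not a formal consequence of it; however, the $1/N$ in the lemma statement appears to be a typo, as the subsequent use in Proposition~\ref{prop:cost_descent} requires precisely your $1/N$-free bound for the regularizer terms to cancel, so flagging it as a normalization discrepancy rather than patching it is the right call.
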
%
\begin{proof}
  The proof follows readily from the convexity of $r_{\ell}$ and the column 
  stochasticity of $\mathbf{A}_\ell^t$.
\end{proof}

We are now ready to introduce our Lyapunov-like function: given $\avgdec_\ell^t$, 
$(\bx_{(i,\ell)}^t)_{i=1}^N$, and  $(\phi_{(i,\ell)}^t)_{i=1}^N$, define (we omit the 
dependence on the algorithm variables for notational simplicity)
\begin{equation*} %
  V^t\triangleq 	\smallsum_{i=1}^N f_i ( \avgdec^{t+1} ) +  \smallsum_{\ell=1}^B
  \smallsum_{i=1}^N
  \phi_{(i,\ell)}^t\, \reg_{\ell} ( \bx_{(i,\ell)}^t).
\end{equation*}
The descent properties of the above function along the trajectory of the algorithm are studied in the following proposition. 
\begin{proposition}
\label{prop:cost_descent}
  Consider problem~\eqref{eq:problem},  under Assumptions~\ref{ass:cost_functions}, 
  \ref{ass:strong_conn}, \ref{ass:col_stoch}, \ref{ass:surrogate}, \ref{ass:step-size}; and let
  $\{(\phi_{(i,:)}^t)_{i=1}^N \}_{t\ge 0}$, $\{ \avgdec^t\}_{t \geq 0}$,
   and $\{(\bx_{(i,:)}^t)_{i=1}^N \}_{t\ge 0}$  be 
 the sequences  generated by \algname/.  
  Then  $\{V^t\}_{t\geq 0}$ satisfies:
  \begin{align}
  \label{eq:cost_descent_telescopic}
  \begin{split}
    & 
   V^{t+1}\leq    V^t
       -c_7
    \smallsum_{\ell=1}^B
    \smallsum_{i=1}^N
    \gamma^{t} 
    \| \Deltax_{ (i,\ell) }^{t}  \|^2 
    +
    P^{t},
  \end{split}
  \end{align}
  with $\sum_{t=0}^\infty P^t<\infty$,   where $P^t$ is defined as
  \begin{align*}
  \begin{split}
    P^t & \triangleq
  c_8\,
  \gamma^t\, 
  \smallsum_{\ell=1}^B 
  \smallsum_{i=1}^N 
  \Big\| 
  \dfrac{1}{N}
  \smallsum_{j=1}^N
    \nabla_{\ell} 
    f_{j} ( \avgdec^{t} ) - \by_{(i,\ell)}^{t} 
  \Big\|
  +
  c_6\, (\gamma^t)^2,
  \end{split}
  \end{align*}
and $c_6, c_7$, and $c_8$ are some positive, finite scalars.
\end{proposition}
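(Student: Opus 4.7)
The plan is to decompose the one-step change $V^{t+1}-V^t$ into its smooth and nonsmooth increments, bound each using the structural results proven above, and combine. Write
\begin{align*}
V^{t+1}-V^t = \mathcal{A}^t + \mathcal{B}^t,
\end{align*}
where $\mathcal{A}^t \triangleq \sum_{j} f_j(\avgdec^{t+2}) - \sum_j f_j(\avgdec^{t+1})$ and $\mathcal{B}^t \triangleq \sum_{\ell,i}[\phi^{t+1}_{(i,\ell)}\reg_\ell(\bx^{t+1}_{(i,\ell)}) - \phi^t_{(i,\ell)}\reg_\ell(\bx^t_{(i,\ell)})]$. On $\mathcal{A}^t$, I would apply the descent lemma (a consequence of $L$-smoothness of $\sum_j f_j$ from Assumption~\ref{ass:cost_functions}(iii)), substitute the block-wise averaged dynamics \eqref{eq:avg_dec_block_evolution}, namely $\avgdec^{t+2}_\ell - \avgdec^{t+1}_\ell = \gamma^{t+1}\frac{1}{N}\sum_i \phi^{t+1}_{(i,\ell)}\Deltax^{t+1}_{(i,\ell)}$, and absorb the quadratic remainder $\frac{NL}{2}\|\avgdec^{t+2}-\avgdec^{t+1}\|^2$ into $c_6(\gamma^{t+1})^2$ using Lemma~\ref{lem:bounds_x_y} and $\phi^t_{(i,\ell)}\leq N$.

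Next, in the resulting first-order term I would add and subtract $N\by^{t+1}_{(i,\ell)}$ inside the gradient factor. The ``good'' piece $\gamma^{t+1}\sum_{\ell,i}(\phi^{t+1}_{(i,\ell)}/N)\langle N\by^{t+1}_{(i,\ell)},\Deltax^{t+1}_{(i,\ell)}\rangle$ is controlled via Lemma~\ref{lem:gradient_related_condition} (used with $\nabla\surr_{i,\ell}(\bx^t_{(i,\ell)};\cdot)=N\by^t_{(i,\ell)}$), producing $-\tau_i\|\Deltax^{t+1}_{(i,\ell)}\|^2 - (\reg_\ell(\tildex^{t+1}_{(i,\ell)}) - \reg_\ell(\bx^{t+1}_{(i,\ell)}))$; the ``residual'' piece is bounded via Cauchy--Schwarz and the uniform bound $\|\Deltax\|\leq C_2$ from Lemma~\ref{lem:bounds_x_y}, yielding the tracking/consensus contribution to $P^t$. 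Applying Lemma~\ref{prop:descent_on_reg} to $\mathcal{B}^t$ gives $\mathcal{B}^t \leq \gamma^t\sum_{\ell,i}(\phi^t_{(i,\ell)}/N)(\reg_\ell(\tildex^t_{(i,\ell)}) - \reg_\ell(\bx^t_{(i,\ell)}))$, so that the regularizer-difference contributions from the two lemmas cancel up to the time-shift residual discussed below. Using the uniform lower bound $\phi^t_{(i,\ell)}\geq \delta>0$ from Lemma~\ref{lem:perturbed_pushsum}(3) then extracts the advertised $-c_7\gamma^t\sum_{\ell,i}\|\Deltax^t_{(i,\ell)}\|^2$ with $c_7 = \delta\min_i\tau_i/N$.

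To establish $\sum_t P^t<\infty$, I would decompose the tracking term via the triangle inequality and Lipschitz continuity of $\nabla f_j$:
\begin{align*}
\Big\|\tfrac{1}{N}\sum_j\nabla_\ell f_j(\avgdec^t) - \by^t_{(i,\ell)}\Big\|
\leq \tfrac{L}{N}\sum_j\|\avgdec^t - \bx^t_{(j,:)}\|
+ \Big\|\tfrac{1}{N}\sum_j\nabla_\ell f_j(\bx^t_{(j,:)}) - \by^t_{(i,\ell)}\Big\|,
\end{align*}
where by \eqref{eq:gradient_sum_conservation} the second summand equals $\|\avggrad^t_\ell - \by^t_{(i,\ell)}\|$. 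Multiplying by $\gamma^t$ and summing, the first piece is summable by Proposition~\ref{prop:consensus} (cf.~\eqref{eq:summable_err}), the second by Proposition~\ref{prop:tracker_convergence} (cf.~\eqref{eq:tracker_summable_err}), and the $(\gamma^t)^2$ piece by Assumption~\ref{ass:step-size}(ii). The main obstacle is the book-keeping caused by the Lyapunov's shifted use of $\avgdec^{t+1}$ inside $V^t$: the regularizer-difference terms produced by Lemma~\ref{lem:gradient_related_condition} (at iteration $t+1$) and Lemma~\ref{prop:descent_on_reg} (at iteration $t$) live at different indices, so the cancellation is not pointwise but leaves a residual of order $|\gamma^t-\gamma^{t+1}|\cdot|\reg_\ell(\tildex^t)-\reg_\ell(\bx^t)|$, which is controlled using $\gamma^{t+1}\leq\gamma^t$ (Assumption~\ref{ass:step-size}(i)), Lipschitz continuity of $\reg_\ell$ on $\KK$ (from the bounded-subgradient Assumption~\ref{ass:cost_functions}(iv)), and $\|\tildex^t-\bx^t\|=\|\Deltax^t\|\leq C_2$; the residual telescopes and is absorbed into $P^t$ without spoiling its summability.
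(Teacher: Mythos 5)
Your overall skeleton (descent lemma on the smooth part along \eqref{eq:avg_dec_block_evolution}, add-and-subtract of $N\by_{(i,\ell)}$, Lemma~\ref{lem:gradient_related_condition} for the ``good'' inner product, Cauchy--Schwarz with the uniform bound $C_2$ of Lemma~\ref{lem:bounds_x_y}, $\phi_{(i,\ell)}^t\in[\delta,N]$, and summability of $P^t$ via \eqref{eq:summable_err}, \eqref{eq:tracker_summable_err} and $\sum_t(\gamma^t)^2<\infty$) is exactly the paper's argument. The gap is in how you handle the regularizer terms. By reading the time shift in the displayed definition of $V^t$ literally, you apply the descent chain at iteration $t+1$ (producing $-\gamma^{t+1}\sum_{\ell,i}\phi_{(i,\ell)}^{t+1}\big(\reg_\ell(\tildex_{(i,\ell)}^{t+1})-\reg_\ell(\bx_{(i,\ell)}^{t+1})\big)$) while Lemma~\ref{prop:descent_on_reg} is applied at iteration $t$ (producing $+\gamma^{t}\tfrac{1}{N}\sum_{\ell,i}\phi_{(i,\ell)}^{t}\big(\reg_\ell(\tildex_{(i,\ell)}^{t})-\reg_\ell(\bx_{(i,\ell)}^{t})\big)$). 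These two terms differ not only in the step size but in \emph{all} of their arguments ($\phi$, $\bx$, $\tildex$, and even the selected blocks, at $t+1$ versus $t$), so the mismatch is not ``a residual of order $|\gamma^t-\gamma^{t+1}|\,|\reg_\ell(\tildex^t)-\reg_\ell(\bx^t)|$''; it is of the form $\gamma^t\sum_{\ell,i}\phi_{(i,\ell)}^{t}\big(\reg_\ell(\tildex_{(i,\ell)}^{t})-\reg_\ell(\bx_{(i,\ell)}^{t})\big)-\gamma^{t+1}\sum_{\ell,i}\phi_{(i,\ell)}^{t+1}\big(\reg_\ell(\tildex_{(i,\ell)}^{t+1})-\reg_\ell(\bx_{(i,\ell)}^{t+1})\big)$, whose absolute value a crude bound (bounded subgradients plus $\|\Deltax\|\le C_2$) only controls by $O(\gamma^t)$, which is \emph{not} summable; claiming it ``telescopes and is absorbed into $P^t$ without spoiling summability'' is unjustified, since $P^t$ as defined in the statement contains no such term and a per-iteration inequality with summable $P^t$ is what must be proved. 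Your route would also deliver the descent term as $-c_7\gamma^{t+1}\sum_{\ell,i}\|\Deltax_{(i,\ell)}^{t+1}\|^2$ and the tracking error at $t+1$, i.e., a shifted version of \eqref{eq:cost_descent_telescopic}, not the statement as written.

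The paper avoids all of this by aligning indices: the descent lemma is applied to the move $\avgdec^{t}\to\avgdec^{t+1}$ (so the smooth part is compared between $t$ and $t+1$, with $\gamma^t$, $\phi^t$, $\Deltax^t$, $\by^t$), and this is combined with Lemma~\ref{prop:descent_on_reg} for the change of $\sum_{\ell,i}\phi_{(i,\ell)}^t\reg_\ell(\bx_{(i,\ell)}^t)$ between the same iterations $t$ and $t+1$; then the two regularizer-difference terms are both evaluated at iteration $t$ and cancel \emph{exactly}, leaving only $-c_7\gamma^t\sum_{\ell,i}\|\Deltax_{(i,\ell)}^t\|^2$ plus the tracking and $(\gamma^t)^2$ pieces that constitute $P^t$. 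In other words, the $\avgdec^{t+1}$ appearing inside the displayed definition of $V^t$ should be read consistently with the proof (all terms of $V^t$ at iteration $t$); once the indices are aligned, no residual arises and no extra argument about $|\gamma^t-\gamma^{t+1}|$ or Lipschitzness of $\reg_\ell$ is needed. With that correction, the remainder of your proposal (including the summability argument for $P^t$) goes through as in the paper.
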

\begin{proof}
Applying the descent lemma to~\eqref{eq:avg_dec_block_evolution}, with  $L = \sum_{i=1}^N L_i $, yields
\begin{align*}
  & \smallsum_{i=1}^N f_i ( \avgdec^{t+1} )
  \\  
  &
  \le
  \smallsum_{i=1}^N f_i ( \avgdec^{t} )
  + 
  \Big( \smallsum_{j=1}^N \nabla f_j ( \avgdec^{t} ) \Big) ^{\!\top }\!
  \Big( \avgdec^{t+1} \! -\! \avgdec^{t} \Big)
  +
  \dfrac{ L }{2}\, \| \avgdec^{t+1} \!-\! \avgdec^{t} \|^2
  \\
  &\le 
  \smallsum_{i=1}^N f_i ( \avgdec^{t} ) 
  + 
  \smallsum_{\ell=1}^B
  \Big(
  \smallsum_{j=1}^N \nabla_{\ell} f_{j} ( \avgdec^{t} )\Big) ^\top 
  \Big( 
  \dfrac{\gamma^t}{N} 
  \smallsum_{i=1}^N
  \phi_{ (i, \ell) }^{t} 
  \Deltax_{ (i,\ell) }^{t}
  \Big)
  \\
  &
 \quad  +
  \dfrac{ L }{2} 
  \smallsum_{\ell=1}^B
  \Big\|
    \dfrac{1}{N}\,
    \gamma^t
    \smallsum_{i=1}^N 
    \phi_{ (i,\ell ) }^{t} \,
    \Deltax_{ (i,\ell ) }^{t} 
  \Big\|^2
  \\
  &\stackrel{(a)}{\le} \smallsum_{i=1}^N f_i ( \avgdec^{t} ) 
  +
  \gamma^t 
  \smallsum_{\ell=1}^B 
  \smallsum_{i=1}^N 
  \phi_{(i,\ell)}^t
  \big( \by_{(i,\ell)}^{t} \big)^\top 
  \Deltax_{ (i,\ell) }^{t}\\   %
   &
  \quad +
  \gamma^t
  \smallsum_{\ell=1}^B 
  \smallsum_{i=1}^N 
  \phi_{(i,\ell)}^t
  \Big( 
  \dfrac{1}{N}
    \smallsum_{j=1}^N
    \nabla_{\ell} 
    f_{j} ( \avgdec^{t} ) - \by_{(i,\ell)}^{t} 
  \Big)^\top 
  \Deltax_{ (i,\ell) }^{t}
  \\
  & 
  \quad +
  (\gamma^t)^2 
  \dfrac{ L }{2}\,
  \smallsum_{\ell=1}^B
  \smallsum_{i=1}^N
  \dfrac{\phi_{(i,\ell)}^t}{N}
  \,\| \Deltax_{ (i,\ell) }^{t} \|^2,
  \\
  &
  \stackrel{(b)}{\le}  \smallsum_{i=1}^N f_i ( \avgdec^{t} ) 
  -
  \gamma^t \tau
  \smallsum_{\ell=1}^B
  \smallsum_{i=1}^N
  \phi_{(i,\ell)}^t
  \| \Deltax_{ (i,\ell) }^{t} \|^2
  \\
  &\quad 
  - \gamma^t
  \smallsum_{\ell=1}^B
  \smallsum_{i=1}^N
  \phi_{(i,\ell)}^t
    \big( \reg_{\ell}( \tildex_{(i,\ell )}^t) - \reg_{\ell} ( \bx_{(i,\ell)}^t) \big)
  \\
  &\quad 
  +
  \gamma^t
  \smallsum_{\ell=1}^B 
  \smallsum_{i=1}^N 
  \phi_{(i,\ell)}^t
  \Big \| 
  \dfrac{1}{N}
  \smallsum_{j=1}^N
    \nabla_{\ell} 
    f_{j} ( \avgdec^{t} ) - \by_{(i,\ell)}^{t} 
  \Big \|\,
  \Big \|
    \Deltax_{ (i,\ell) }^{t} 
  \Big \|
  \\
  & \quad 
  +
  c_6 (\gamma^t)^2,
\end{align*}
where in (a) we added and subtracted $\gamma^t 
\sum_{i=1}^N $ $
\sum_{\ell=1}^B
\phi_{(i,\ell )}^t (\by_{(i,\ell )}^t)^\top \Deltax_{ (i,\ell ) }^{t}$; 
and in (b) we used Lemma~\ref{lem:gradient_related_condition} [cf.~\eqref{eq:sufficient_descent}], 
Lemma~\ref{lem:bounds_x_y} [cf.~\eqref{eq:x_bound}], 
we defined  {$\tau=\min_i \tau_i$}, 
and $c_6$ is some positive, finite scalar. %

Combining now the above chain of inequalities with  Lemma~\ref{prop:descent_on_reg}  and using Lemma~\ref{lem:perturbed_pushsum}(3), we can write %
\begin{align*}
  & 
 V^{t+1}\leq V^t   
  - c_7\,
  \gamma^t \,
  \smallsum_{\ell=1}^B
  \smallsum_{i=1}^N
  \| \Deltax_{ (i,\ell) }^{t} \|^2
  \\
  &
 \quad  +
  \underbrace{c_8
  \gamma^t
  \smallsum_{\ell=1}^B 
  \smallsum_{i=1}^N 
  \Big\| 
  \dfrac{1}{N}
  \smallsum_{j=1}^N
    \nabla_{\ell} 
    f_{j} ( \avgdec^{t} ) - \by_{(i,\ell)}^{t} 
  \Big\|
  +
  c_6\, (\gamma^t)^2}_{P^t},
\end{align*}
where $c_7$, and $c_8$ are some positive, finite scalars. 

To conclude the proof, we show next that $P^t$ is summable. Since 
$\sum_{t=0}^\infty (\gamma^t)^2<\infty$ (cf.~Assumption~\ref{ass:step-size}), 
it is sufficient to prove that the first term of $P^t$ is summable, as shown below:
\begin{align*}
  &
  \lim_{k\to \infty}
  \smallsum_{t=0}^k\gamma^t
  \smallsum_{\ell=1}^B 
  \smallsum_{i=1}^N 
  \Big \| 
  \dfrac{1}{N}
  \smallsum_{j=1}^N
    \nabla_{\ell} 
    f_{j} ( \avgdec^{t} ) - \by_{(i,\ell)}^{t} 
  \Big\|
  \\ 
  &
  \stackrel{(a)}{\le}
 \lim_{k\to \infty} 
 \smallsum_{t=0}^k \gamma^t
  \smallsum_{\ell=1}^B 
  \smallsum_{i=1}^N 
  \Big \|
  \dfrac{1}{N}
  \smallsum_{j=1}^N
    \nabla_{\ell} 
    f_{j} ( \bx_{(j,:)}^t ) - \by_{(i,\ell)}^{t} 
  \Big \|
  \\
  & \quad +
  c_9\, \lim_{k\to \infty} \smallsum_{t=0}^k \gamma^t
  \smallsum_{\ell=1}^B 
  \smallsum_{i=1}^N 
 \big \|
 \bx_{(i,:)}^t 
  - 
 \avgdec^{t} 
  \big \| \stackrel{(b)}{<} \infty,
\end{align*}
where in (a) we used the Lipschitz continuity of $\nabla f_i$; 
(b) follows from Prop.~\ref{prop:consensus} 
and~\ref{prop:tracker_convergence} with $c_9$ positive scalar.
\end{proof}

\subsection{Asymptotic Convergence of  $\{\bar{\bs}^t\}_{t\geq 0}$}
\label{sec:convergence_s_bar}

Since $U$ is coercive and $\sum_{t=0}^\infty P^{t}<\infty$, \eqref{eq:cost_descent_telescopic} 
implies that i) $\{V^t\}_{\geq 0}$ is convergent; and ii)  and $\{\bar{\bs}^t\}_{t\geq 0}$ is bounded. Therefore, it must be 
\begin{align}
  \label{eq:summable_dec}
  \smallsum_{t=0}^\infty 
   \smallsum_{i=1}^N  \smallsum_{\ell=1}^B
  \gamma^{t}
  \| \Deltax_{(i,\ell) }^{ t} \|^2 
  <\infty.
\end{align}

Recall that agents select their blocks to update according to an essential cyclic rule 
[cf. Assumption~\ref{ass:block_selection}].
This means that in any time window $[t,t+T-1]$, 
with $T>0$ defined in Proposition~\ref{prop:induced_graph_connectivity}, 
any agent $i$ selects all of its blocks at least once.
Denote by $t + s_i^t(\ell)$ the last time agent $i$ selects block $\ell$ in the 
time window $[t,t+T-1]$; notice that such a $s_i^t(\ell)$ is always well-defined 
and $s_i^t(\ell)\in [0, T-1]$. Finally, let 
\begin{equation} \label{eq:Delta^t}
  \bDelta^t 
  \triangleq 
  \smallsum_{i=1}^N 
  \smallsum_{\ell=1}^{B}
  \| \Deltax_{(i, \ell)}^{t+s_i^t(\ell)} \|.
\end{equation} 
The above quantity will play a key role to prove (subsequence) convergence of 
$\{\bar{\bs}^t\}_{t\geq 0}$. We organize the rest of the proof in the following steps:
\begin{itemize}
	\item \textbf{Step~1:} We  prove $\lim_{t\to \infty} \bDelta^t = 0$, by showing that, first,
	$\liminf_{t\to \infty} \bDelta^t = 0$  [Step~1(a)], and, second, $\limsup_{t\to \infty} \bDelta^t = 0$ [Step~1(b)];\medskip 
	\item\textbf{Step~2:}  Using results in Step~1, we prove that every limit point of $\{\bar{\bs}^t\}_{t\geq 0}$ is a stationary solution of problem~\eqref{eq:problem}.\smallskip 
\end{itemize}  
  
  \noindent  {{\bf Step~1(a)} -- $\liminf_{t\to \infty}\bDelta^t=0$}. For all $t\geq T-1$, we have %
\begin{align}\label{eq:summable_dec_window}
\begin{split}
 &T \cdot
  \smallsum_{\tau=0}^t 
  \smallsum_{\ell=1}^B
  \smallsum_{i=1}^N 
  \gamma^{\tau}
  \| \Deltax_{(i,\ell) }^{ \tau} \|^2\smallskip 
  \\
  &\qquad \ge
   \smallsum_{\tau=0}^{t-T+1} 
  \smallsum_{s =0}^{T-1}
  \smallsum_{\ell=1}^B
  \smallsum_{i=1}^N 
  \gamma^{\tau +s}
  \| \Deltax_{(i,\ell) }^{ \tau + s} \|^2 \smallskip 
  \\
 & \qquad \stackrel{(a) }{\ge} 
     \smallsum_{\tau=0}^{t-T+1} 
       \gamma^{\tau +T-1}
  \smallsum_{s =0}^{T-1}
  \smallsum_{\ell=1}^B
  \smallsum_{i=1}^N 
  \| \Deltax_{(i,\ell) }^{ \tau + s} \|^2,
  \end{split}
\end{align}
where $(a)$ follows from Assumption~\ref{ass:step-size}(i). Using  \eqref{eq:summable_dec} and $\sum_{t=0}^\infty \gamma^t = \infty$, we deduce    $$    \liminf_{t\to\infty} 
  \smallsum_{s =0}^{T-1}
  \smallsum_{\ell=1}^B
  \smallsum_{i=1}^N 
  \| \Deltax_{(i,\ell) }^{ t + s} \|=0,$$
  which leads to 
 $$
 0=    
    \liminf_{t\to\infty}    
  \smallsum_{\ell=1}^B
  \smallsum_{i=1}^N 
  \smallsum_{s =0}^{T-1}
  \| \Deltax_{(i,\ell) }^{t + s} \| \geq  \displaystyle \liminf_{t\to\infty}  
 \bDelta^t.	 
$$

 \noindent {{\bf Step~1(b)} -- $\limsup_{t\to \infty}\bDelta^t=0$}. We begin  stating  the following lemma, which proves that the best-response maps $\widetilde{\bx}_{(i,\ell_i^t)}$ 
[cf.~\eqref{eq:alg_local_min}] and $\widehat{\bx}_{(i,\ell_i^t)}$ [cf.~\eqref{eq:xhat_definition}],   are asymptotically consistent along the trajectory of the algorithm.  
\begin{lemma} 
\label{lem:best_response_consistency}
In the setting of \algname/, the best-response maps $\widehat{\bx}_{(i,\ell_i^t)}$ and $\widetilde{\bx}_{(i,\ell_i^t)}^t$  satisfy
  \begin{align}
    \lim_{t\to\infty}
   \left \| 
      \hatx_{(i,\ell_i^t)} \big( \bx_{(i,:)}^t \big) - \tildex_{(i,\ell_i^t)}^t
    \right\|
    = 0, \hspace{0.2cm} \forall \, i\in \until{N}.
    \label{eq:best_response_consistency}
  \end{align}
\end{lemma}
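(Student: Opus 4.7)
The plan is to exploit the fact that $\tildex_{(i,\ell_i^t)}^t$ and $\hatx_{(i,\ell_i^t)}(\bx_{(i,:)}^t)$ are the unique minimizers of two strongly convex problems that are identical except for a linear term, and then bound the difference of the two linear coefficients using the consensus and tracking results already proved in Propositions~\ref{prop:consensus} and~\ref{prop:tracker_convergence}.

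Concretely, writing out the two definitions, both $\tildex_{(i,\ell_i^t)}^t$ and $\hatx_{(i,\ell_i^t)}(\bx_{(i,:)}^t)$ minimize $\widetilde{f}_i(\bx_{\ell_i^t};\bx_{(i,:)}^t) + r_{\ell_i^t}(\bx_{\ell_i^t}) + \bc^\top(\bx_{\ell_i^t} - \bx_{(i,\ell_i^t)}^t)$ over $\bx_{\ell_i^t}\in\KK_{\ell_i^t}$, where the linear coefficient is $\bc = N\by_{(i,\ell_i^t)}^t - \nabla_{\ell_i^t} f_i(\bx_{(i,:)}^t)$ in the first problem and $\bc = \sum_{j=1}^N \nabla_{\ell_i^t} f_j(\bx_{(i,:)}^t) - \nabla_{\ell_i^t} f_i(\bx_{(i,:)}^t)$ in the second. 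Because $\widetilde{f}_i(\cdot;\bx_{(i,:)}^t) + r_{\ell_i^t}(\cdot)$ is $\tau_i$-strongly convex (cf.\ Assumption~\ref{ass:surrogate}), a standard argument based on the first-order optimality conditions of both problems and strong monotonicity of the subdifferential yields
\begin{equation*}
  \bigl\|\tildex_{(i,\ell_i^t)}^t - \hatx_{(i,\ell_i^t)}(\bx_{(i,:)}^t)\bigr\|
  \;\le\; \frac{N}{\tau_i}\, \Bigl\| \by_{(i,\ell_i^t)}^t - \frac{1}{N}\smallsum_{j=1}^N \nabla_{\ell_i^t} f_j(\bx_{(i,:)}^t)\Bigr\|.
\end{equation*}
It then remains to show that the right-hand side vanishes as $t\to\infty$.

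For this, I would add and subtract $\frac{1}{N}\sum_j \nabla_{\ell_i^t} f_j(\bx_{(j,:)}^t)$ inside the norm and use the triangle inequality. The first resulting term, $\|\by_{(i,\ell_i^t)}^t - \frac{1}{N}\sum_j \nabla_{\ell_i^t} f_j(\bx_{(j,:)}^t)\|$, is bounded by $\|\by_{(i,:)}^t - \frac{1}{N}\sum_j \nabla f_j(\bx_{(j,:)}^t)\|$ and thus tends to zero by Proposition~\ref{prop:tracker_convergence}. The second resulting term, $\|\frac{1}{N}\sum_j [\nabla_{\ell_i^t} f_j(\bx_{(j,:)}^t) - \nabla_{\ell_i^t} f_j(\bx_{(i,:)}^t)]\|$, is bounded via Lipschitz continuity of each $\nabla f_j$ (cf.\ Assumption~\ref{ass:cost_functions}) by $\frac{L}{N}\sum_j \|\bx_{(j,:)}^t - \bx_{(i,:)}^t\|$, which in turn is bounded by $\frac{L}{N}\sum_j (\|\bx_{(j,:)}^t - \avgdec^t\| + \|\avgdec^t - \bx_{(i,:)}^t\|)$ and thus vanishes by Proposition~\ref{prop:consensus}. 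Since both pieces vanish, so does the right-hand side, establishing~\eqref{eq:best_response_consistency}.

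The slightly delicate point, and essentially the only non-routine step, is the mismatch between the gradient argument tracked by $\by_{(i,:)}^t$ (which is $\bx_{(j,:)}^t$, evaluated at each agent's own iterate) and the argument appearing in the definition of $\hatx_{(i,\ell_i^t)}(\bx_{(i,:)}^t)$ (which is $\bx_{(i,:)}^t$, all agents evaluated at agent $i$'s iterate). This mismatch is precisely what is absorbed by the consensus estimate in Proposition~\ref{prop:consensus}; the rest of the argument is the standard sensitivity bound for strongly convex minimizers with respect to a perturbation of the linear term. Note that the block index $\ell_i^t$ ranges over the finite set $\until{B}$, so the per-block convergences provided by Propositions~\ref{prop:consensus}--\ref{prop:tracker_convergence} suffice uniformly.
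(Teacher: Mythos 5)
Your proposal is correct and follows essentially the same route as the paper's proof: compare the first-order optimality conditions of the two strongly convex subproblems (which differ only in the linear coefficient, $N\by_{(i,\ell_i^t)}^t$ versus $\smallsum_{j=1}^N \nabla_{\ell_i^t} f_j(\bx_{(i,:)}^t)$), obtain the sensitivity bound, then add and subtract $\frac{1}{N}\smallsum_j \nabla_{\ell_i^t} f_j(\bx_{(j,:)}^t)$ and kill the two pieces with Proposition~\ref{prop:tracker_convergence} (tracking) and Proposition~\ref{prop:consensus} plus Lipschitz continuity of $\nabla f_j$ (consensus). The only cosmetic discrepancy is the constant in the sensitivity bound ($N/\tau_i$ in your version versus $1/\tau_i$ as written in the paper), which is immaterial for the vanishing-limit conclusion.
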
 
\begin{proof}
  We use the shorthand   $\hatx_{(i,\ell_i^t)}^t$ for $\hatx_{(i,\ell_i^t)} \big ( \bx_{(i,:)}^t \big )$. Invoking the 
  optimality conditions of $\hatx_{(i,\ell_i^t)} ( \bx_{(i,:)}^t ) $ 
  and $\tildex_{(i,\ell_i^t)}^{t}$ yields
  \begin{align}
  \label{eq:optimality_xhat}
  \begin{split}
    & \big( \tildex_{(i,\ell_i^t)}^t - \hatx_{(i,\ell_i^t)}^t \big)^\top \times
    \\
    & 
    \Big ( 
      \nabla_{\ell_i^t} 
      \surr_{i,\ell_i^t} 
      \big (
        \hatx_{(i,\ell_i^t)}^t ; \bx_{(i,:)}^t , {\textstyle \frac{1}{N}} \smallsum_{j=1}^N \nabla_{\ell_i^t} 
        f_j (\bx_{(i,:)}^t)
      \big )
    \\
    & 
    +
    \subgrad \reg_{\ell_i^t}
    \big(
      \hatx_{(i,\ell_i^t)}^t
    \big)
    \Big )
    \ge 0,
  \end{split}
  \end{align}
  and
  \begin{align}
  \label{eq:optimality_xtilde}
  \begin{split}
    & 
    \big( \hatx_{(i,\ell_i^t)}^t - \tildex_{(i,\ell_i^t)}^t \big)^\top  \times
    \\
    &
    \Big ( 
    \nabla_{\ell_i^t} \surr_{i,\ell_i^t} (\tildex_{(i,\ell_i^t)}^t; \bx_{(i,:)}^t, \by_{(i,\ell_i^t)}^t)
    +
    \subgrad \reg_{\ell_i^t}
    \big(
      \tildex_{(i,\ell_i^t)}^{t }
    \big)
    \Big )
    \ge 0.
  \end{split}
  \end{align}
  Adding the two inequalities~\eqref{eq:optimality_xhat} and~\eqref{eq:optimality_xtilde}
  and using the strong convexity of $\tf_i (\bullet; \bx_{(i,:)}^t )$ as well as
  the convexity of $\reg_{\ell_i^t}$, yields
  \begin{align*}
  & \Big \| \tildex_{(i,\ell_i^t)}^t - \hatx_{(i,\ell_i^t)}^t \Big \| 
    \le   
    \dfrac{1}{\tau_i} 
    \Big \| 
       \dfrac{1}{N} \smallsum_{j=1}^N \nabla_{\ell_i^t} f_j(\bx_{(i,:)}^t) 
      -
      \by_{(i,\ell_i^t)}^t 
    \Big \|
    \\
    &\quad \le 
    \dfrac{1}{\tau_i} 
    \Big\| 
      \dfrac{1}{N} \smallsum_{j=1}^N \nabla_{\ell_i^t} f_j(\bx_{(i,:)}^t) 
      -
      \dfrac{1}{N} \smallsum_{j=1}^N \nabla_{\ell_i^t} f_j(\bx_{(j,:)}^t) 
    \Big\|
    \\
    & \qquad +
        \dfrac{1}{\tau_i} 
    \Big\| 
       \dfrac{1}{N} \smallsum_{j=1}^N \nabla_{\ell_i^t} f_j(\bx_{(j,:)}^t) 
      -
      \by_{(i,\ell_i^t)}^t 
    \Big\|
    \\
    &\quad \le 
    \dfrac{1}{\tau_i\, N} \smallsum_{j=1}^N L_j\, \left\| \bx_{(i,:)}^t - \bx_{(j,:)}^t \right\|
    \\
    &
    \qquad +
        \dfrac{1}{\tau_i} 
    \Big\| 
       \dfrac{1}{N} \smallsum_{j=1}^N \nabla_{\ell_i^t} f_j(\bx_{(j,:)}^t) 
      -
      \by_{(i,\ell_i^t)}^t 
    \Big\|
        \\ 
    &\quad \le
    \dfrac{1}{\tau_i N} 
    \smallsum_{j=1}^N L_j \Big( \Big \| \bx_{(i,:)}^t - \avgdec^t \| + \| \avgdec^t - \bx_{(j,:)}^t \Big\| \Big)
    \\
    &
    \qquad+  
        \dfrac{1}{\tau_i} 
    \Big \| 
      \dfrac{1}{N} \smallsum_{j=1}^N \nabla_{\ell_i^t} f_j(\bx_{(j,:)}^t) 
      -
      \by_{(i,\ell_i^t)}^t 
    \Big \|.
  \end{align*}
  Finally, by noticing that
  \begin{align*}
  \begin{split}
    & 
    \limsup_{t\to\infty} 
    \Big \|
       \dfrac{1}{N} \smallsum_{i=1}^N \nabla_{\ell_i^t} f_i(\bx_{(i,:)}^t) 
      -
      \by_{(i,\ell_i^t)}^t     
    \Big \|
    \\ 
    & 
    \le
    \limsup_{t\to\infty} 
    \smallsum_{\ell=1}^{B}
    \Big \|
       \dfrac{1}{N} \smallsum_{i=1}^N \nabla_{\ell} f_i(\bx_{(i,:)}^t) 
      -
      \by_{(i,\ell)}^t 
    \Big \|,
  \end{split}
  \end{align*}
  and invoking Propositions~\ref{prop:consensus} and~\ref{prop:tracker_convergence}, 
  we obtain the desired result
 $   \limsup_{t\to \infty} \big \| \hatx_{(i,\ell_i^t)}- \tildex_{(i,\ell_i^t)}^t \big \| = 0$.
\end{proof}

Now we prove by contradiction that $\limsup_{t\to\infty } \bDelta^{t} = 0$.
Suppose
$\limsup_{t\to\infty } \bDelta^{t} >0$.
Since $\liminf_{t\to\infty} \bDelta^{t} =0$,   there exists a $\delta > 0$ such that $\bDelta^{t} < \delta$ for infinitely many $t$
and also $\bDelta^{t} > 2\delta$ for infinitely many $t$.
Therefore, one can always find an infinite set of indices, say $\TT$, having the following property:
for any $t \in \TT$, there exists an integer $\theta_t > t$ such that
\begin{align}\label{eq:limsup_cond}
\begin{split}
  & \bDelta^{t} \le \delta, \: \: \bDelta^{\theta_t} \ge 2\delta,
  \\
  & \delta < \bDelta^\tau < 2\delta, \hspace{1cm} t  < \tau < \theta_t.
\end{split}
\end{align}

Therefore, for all $t\in\TT$, we have
\begin{align}
\begin{split}
  \delta  
  & \le
  \bDelta^{\theta_t} - \bDelta^{t}
  \\  
  & = 
  \smallsum_{i=1}^N 
  \smallsum_{\ell=1}^{B} \!
  \Big( 
  \Big \|
  \tildex_{(i,\ell)}^{\theta_t+s_i^{\theta_t}(\ell)} - 
  \bx_{(i,\ell)}^{\theta_t+s_i^{\theta_t}(\ell)} \Big \| 
  \! - \!
  \Big \|
   \tildex_{(i,\ell)}^{t+s_i^t(\ell)} - 
  \bx_{(i,\ell)}^{t+s_i^t(\ell)} 
  \Big\| 
  \Big)
  \\  
  & 
  \le
  \smallsum_{i=1}^N 
  \smallsum_{\ell=1}^{B} \! 
  \Big ( 
  \Big \|
	  \tildex_{(i,\ell)}^{\theta_t+s_i^{\theta_t}(\ell)} -
	  \tildex_{(i,\ell)}^{t+s_i^t(\ell)} 
  \Big \| 
  \! + \!
  \Big \| 
    \bx_{(i,\ell)}^{\theta_t+s_i^{\theta_t}(\ell)} 
    -
    \bx_{(i,\ell)}^{t+s_i^t(\ell)} 
  \Big \| \Big )
  \\
  &
  \leq  
  \smallsum_{i=1}^N 
  \smallsum_{\ell=1}^{B} \!
  \Big (
  \Big \|
  \tildex_{(i,\ell)}^{\theta_t+s_i^{\theta_t}(\ell)} -
  \hatx_{(i,\ell)}(\bx_{(i,:)}^{\theta_t+s_i^{\theta_t}(\ell)}) \Big \|
  \\
  &  \hspace{1.3cm} +
  \Big \| 
    \hatx_{(i,\ell)}(\bx_{(i,:)}^{\theta_t+s_i^{\theta_t}(\ell)}) -
    \hatx_{(i,\ell)}(\bx_{(i,:)}^{t+s_i^t(\ell)})  \Big \|
  \\
  &  \hspace{1.3cm} +
  \Big \|
    \hatx_{(i,\ell)}(\bx_{(i,:)}^{t+s_i^t(\ell)}) -
    \tildex_{(i,\ell)}^{t+s_i^t(\ell)} 
   \Big \|
  \\
  &  \hspace{1.3cm} +
   \Big \| 
    \bx_{(i,\ell)}^{\theta_t+s_i^{\theta_t}(\ell)}  - 
    \bx_{(i,\ell)}^{t+s_i^t(\ell)} 
 \Big \| \Big )
  \\
   & 
   \leq 
   (1+ \widehat{L})\smallsum_{i=1}^N 
   \smallsum_{\ell=1}^{B}
   \Big \| 
    \bx_{(i,:)}^{\theta_t+s_i^{\theta_t}(\ell)} -
    \bx_{(i,:)}^{t+s_i^t(\ell)} 
    \Big \| 
    + e_1^t,
\end{split}
\label{eq:limsup_bound_1}
\end{align}
where in the last inequality, we used the Lipschitz continuity 
of $\hatx_{(i,\ell)}(\bullet)$ [cf. Lemma.~\ref{lem:loc_best_response_property}], 
with $\widehat{L} \triangleq \max_{i} \max_{\ell} \widehat{L}_{i,\ell}$, and 
\begin{align}
\begin{split}
  &   
  e_1^t \triangleq
  \smallsum_{i=1}^N
  \smallsum_{\ell=1}^{B} 
  \bigg (  
  \Big \|
    \hatx_{(i,\ell)}(\bx_{(i,:)}^{\theta_t+s_i^{\theta_t}(\ell)}) - 
    \tildex_{(i,\ell)}^{\theta_t+s_i^{\theta_t}(\ell)}
    \Big \|
  \\
 &
 \hspace{2.8cm}
 +   
 \Big \|
    \hatx_{(i,\ell)}(\bx_{(i,:)}^{t+s_i^t(\ell)}) - 
    \tildex_{(i,\ell)}^{t+s_i^t(\ell)} 
  \Big \| \bigg ).
\end{split}
\label{eq:e1_def}
\end{align}

Adding and subtracting  {$\avgdec^{\theta_t+s_i^{\theta_t}(\ell)}$ and $\avgdec^{t+s_i^{t}(\ell)}$} in 
the first term of the last inequality in~\eqref{eq:limsup_bound_1}, and introducing    
\begin{align}
\label{eq:e2_def}
\begin{split}
  e_2^t  \triangleq  (1+\widehat{L})
  \smallsum_{i=1}^N \!
  \smallsum_{\ell=1}^{B}  
  \Big( 
  \Big \| \bx_{(i,:)}^{\theta_t+s_i^{\theta_t}(\ell)} \!-\!
  \avgdec^{\theta_t+s_i^{\theta_t}(\ell)} 
  \Big \|
  \\
  +
  \Big \|\avgdec^{t+s_i^{t}(\ell)} \!-\! 
  \bx_{(i,:)}^{t+s_i^t(\ell)} \Big \|
   \Big ),
\end{split}
\end{align}
we can write:
\begin{equation}
\label{eq:limsup_bound_2}
\hspace{-0.2cm}
  \delta  
  \le
  (1+\widehat{L})
  \smallsum_{i=1}^N
  \smallsum_{\ell=1}^{B}
  \left \| \avgdec^{\theta_t+s_i^{\theta_t}(\ell)} - 
  \avgdec^{t+s_i^t(\ell)} \right\|
  + e_1^t + e_2^t.
\end{equation}

Since $\theta_t+s_i^{\theta_t}(\ell) $ is the \emph{last} time at which block $\ell$ 
has been updated by agent $i$ in $[\theta_t, \theta_t+T -1]$ and $\theta_t > t$, 
it must hold: $\theta_t+s_i^{\theta_t}(\ell) \ge t+s_i^{t}(\ell)$, 
for all $t \in \TT$.
We assume, without loss of generality, that $\theta_t+s_i^{\theta_t}(\ell) > t+s_i^{t}(\ell)$, 
for all $i\in\until{N}$ and $\ell\in\until{B}$. Hence, all the intervals 
$[t+s_i^{t}(\ell), \theta_t+s_i^{\theta_t}(\ell)] $ are nonempty.
Using~\eqref{eq:avg_dec_block_evolution} to bound
$\| \avgdec^{\theta_t + s_i^{\theta_t}(\ell)} - \avgdec^{t + s_i^t(\ell)} \|$ 
in~\eqref{eq:limsup_bound_2}, we can write
\begin{align*}
  \delta 
 \le
 &
  \,c_{10}\,
  \smallsum_{i=1}^N 
  \smallsum_{\ell=1}^{B} 
  \smallsum_{\ell'=1}^{B} 
  \smallsum_{\tau = t+s_i^{t}(\ell)}^{\theta_t+s_i^{\theta_t}(\ell)-1} 
  \smallsum_{h=1}^N 
  \gamma^\tau 
  \| \Deltax_{(h,\ell')}^\tau \|
  + e_1^t + e_2^t
  \\
  \le
  &
    \,c_{11}\,
  \smallsum_{\tau = t}^{\theta_t+T-1} 
  \smallsum_{h=1}^N 
  \smallsum_{\ell = 1}^{B}
  \gamma^\tau 
  \| \Deltax_{(h,\ell)}^\tau \|
  + e_1^t + e_2^t\\
    =
  &\,
    c_{11}\,
  \smallsum_{\tau = t}^{\theta_t+T-1} 
  \smallsum_{i=1}^N 
  \gamma^\tau 
  \| \Deltax_{(i,\ell_i^\tau)}^\tau \|
  + e_1^t + e_2^t,
\end{align*}
for some positive, finite scalars $c_{10}$ and $c_{11}$, where the last equality 
follows from the fact that, at time $t$, agent $i$ optimizes only block 
$\ell_i^t$, implying $\| \Deltax_{(i,\ell)}^t \| = 0$, for all $\ell \neq \ell_i^t$.

Note that, for each $i\in\until{N}$, $t+T -1$ is the last time agent $i$ selects 
block $\ell_i^{t+T-1}$ in the interval $[t, t+ T -1]$.
Therefore, for all $i\in\until{N}$,
\begin{align*}
  \left \|
    \Deltax_{(i,\ell_i^{t+T-1})}^{t+T-1} 
  \right\|
   =
  \left \|
    \Deltax_{(i,\ell_i^{t+T-1})}^{t+s_i^t(\ell_i^{t+T-1})} 
  \right\| 
  \stackrel{\eqref{eq:Delta^t}}{\le} 
  \bDelta^{t}.
\end{align*}
Hence, we can write 
\begin{align*}
\begin{split}
  \delta 
  &
  \le
  c_{12}
  \Big(
    \smallsum_{\tau = t}^{t + T-1} 
  \gamma^{\tau} 
  \smallsum_{i=1}^N
   \| \Deltax_{(i,\ell_i^\tau)}^\tau \|
  +
  \smallsum_{\tau = t+T}^{\theta_t + T-1} 
  \gamma^{\tau} 
  \bDelta^{\tau -T +1}
  \Big)
  \\
  & \hspace{1.5em}
  + e_1^t + e_2^t 
  \\
  & 
=
  c_{12}
  \smallsum_{\tau = t+1}^{\theta_t } 
  \gamma^{\tau + T -1} 
  \bDelta^{\tau}
  + e_1^t + e_2^t  + e_3^t,
\end{split}
\end{align*}
for some positive, finite scalar $c_{12}$, where we set 
\begin{align}
\label{eq:e3_def}
  e_3^t 
  \triangleq 
  c_{12} \smallsum_{\tau = t}^{t + T-1} 
  \gamma^{\tau} \smallsum_{i=1}^N
   \| \Deltax_{(i,\ell_i^\tau)}^\tau \|.
\end{align}
Since $\bDelta^{\tau}  > \delta$, for $\tau \in [t + 1,\theta_t]$ 
[cf.~\eqref{eq:limsup_cond}], we have
\begin{align}
\begin{split}
  \delta \!
    &
  \le
  c_{12}
  \smallsum_{\tau = t+1}^{\theta_t} \gamma^{\tau + T-1}
   \smallsum_{i=1}^N \smallsum_{\ell=1}^{B} 
  \| \Deltax_{(i,\ell)}^{\tau+s_i^\tau(\ell)} \|
+ e_1^t + e_2^t + e_3^t
  \\
      &
  \le
   \frac{c_{11}}{\delta}
  \smallsum_{\tau = t+1}^{\theta_t} \gamma^{\tau + T-1}
   \left(
   \smallsum_{i=1}^N \smallsum_{\ell=1}^{B} 
  \Big \| \Deltax_{(i,\ell)}^{\tau+s_i^\tau(\ell)} \Big \|
  \right)^{\!2}
  \!\!\!+\! e_1^t \!+\! e_2^t \!+\! e_3^t
  \\
  &
  \le
  c_{13}
  \smallsum_{\tau = t+1}^{\theta_t} \gamma^{\tau + T-1}
  \smallsum_{i=1}^N \smallsum_{\ell=1}^{B} 
  \| \Deltax_{(i,\ell)}^{\tau+s_i^\tau(\ell)} \|^2
  \!+\! e_1^t \!+\! e_2^t \!+\! e_3^t
\\  &
  \le
  c_{13}
  \smallsum_{\tau = t+1}^{\theta_t} \gamma^{\tau + T-1}
  \smallsum_{s=0}^{T-1}\smallsum_{i=1}^N \smallsum_{\ell=1}^{B} 
  \| \Deltax_{(i,\ell)}^{\tau + s} \|^2
   \!+\! e_1^t \!+\! e_2^t \!+\! e_3^t,
\end{split}
\label{eq:refer_to_contradict}
\end{align}
for some positive, finite scalar $c_{13}$.
Using now  Proposition~\ref{prop:consensus},
Lemma~\ref{lem:best_response_consistency},
and~\eqref{eq:summable_dec}, we infer   that $e_1^t$, $e_2^t$, and $e_3^t$ [defined in~\eqref{eq:e1_def}, 
\eqref{eq:e2_def}, and \eqref{eq:e3_def}, respectively] are asymptotically 
vanishing, that is,  $e_1^t, e_2^t, e_3^t \underset{t\to\infty}{\longrightarrow} 0$. 
Furthermore, since  [due to (\ref{eq:summable_dec}) and \eqref{eq:summable_dec_window}] 
\begin{align*}
  \smallsum_{\tau=0}^{\infty} 
       \gamma^{\tau +T-1}
  \smallsum_{s =0}^{T-1}
  \smallsum_{\ell=1}^B
  \smallsum_{i=1}^N 
  \| \Deltax_{(i,\ell) }^{ \tau + s} \|^2<\infty,
\end{align*}
it must be 
\begin{align*}
  \lim_{t\to\infty} \smallsum_{\tau = t+1}^{\theta_t} \gamma^{\tau + T-1}
  \smallsum_{s=0}^{T-1}
  \smallsum_{i=1}^N 
  \smallsum_{\ell=1}^{B} 
  \| \Deltax_{(i,\ell)}^{\tau + s} \|^2 =0.
\end{align*}
Therefore there must exist a sufficient large $\bar{t}\in \mathcal{T}$ such that   
\begin{align*}
  c_{13}
  \smallsum_{\tau = t+1}^{\theta_t} \gamma^{\tau + T-1}
  \smallsum_{s=0}^{T-1}
  \smallsum_{i=1}^N 
  \smallsum_{\ell=1}^{B} 
  \| \Deltax_{(i,\ell)}^{\tau + s} \|^2
  +e_1^t + e_2^t + e_3^t 
  \leq 
  \dfrac{\delta}{2}
\end{align*}
for all $t \ge \bar{t}$, which contradicts~\eqref{eq:refer_to_contradict}.
Thus, it must be $\limsup_{t\to\infty } \bDelta^{t} = 0$, and hence  
\begin{align}
  \label{eq:xtilde_vs_x}
  \lim_{t\to\infty}
  \smallsum_{i=1}^N 
  \smallsum_{\ell=1}^{B}
  \| \Deltax_{(i,\ell)}^{t+s_i^t(\ell)}  \|
  = 0.
\end{align}
\noindent\textbf{Step 2 --  Every limit point of $\{\bar{\bs}^t\}_{t\geq 0}$ is stationary for \eqref{eq:problem}}. 
Let $\bar{\bs}^\infty$ be a limit point of $\{\bar{\bs}^t\}_{t\geq 0}$; note that such a point exists, because 
$\{\bar{\bs}^t\}_{t\geq 0}$ is bounded (cf.~Section~\ref{sec:convergence_s_bar}). By 
Lemma~\ref{lem:loc_best_response_property}, $\bar{\bs}^\infty$ is a stationary solution of 
problem~\eqref{eq:problem}, if
\begin{align}\label{eq:vanish_dec}
  \lim_{t\to\infty} \left\| \hatx_{(i,:)} \big ( \avgdec^t \big ) - \avgdec^t \right\| = 0,
\end{align}
for all $i\in\until{N}$. To prove~\eqref{eq:vanish_dec}, we first bound 
$\| \hatx_{(i,:)} \big ( \avgdec^t \big ) - \avgdec^t\|$  as follows
\begin{align*}
 \left \| \hatx_{(i,:)} \big ( \avgdec^t \big ) - \avgdec^t \right\|
 \, & 
 {\le} \,
  \smallsum_{\ell=1}^B   \left(
 \left \| \hatx_{(i,\ell)} \big ( \avgdec^t \big ) 
  - 
  \hatx_{(i,\ell)} \big ( \avgdec^{t+s_i^t(\ell)} \big ) \right\|\right.
  \\ \begin{split}
  & \hspace{3.5em} +
 \left \| \hatx_{(i,\ell)} \big ( \avgdec^{t+s_i^t(\ell)} \big ) - \avgdec_\ell^{t+s_i^t(\ell)} \right\|\smallskip 
  \\
  & \hspace{3.5em} +
  \left.\left\|\avgdec_\ell^{t+s_i^t(\ell)} - \avgdec_\ell^t \right\| \right)
  \end{split}
  \\
  \begin{split}
  & 
  \stackrel{(a)}{\le}
  \smallsum_{\ell=1}^B  \left(
 \left \| \hatx_{(i,\ell)} \big ( \avgdec^{t+s_i^t(\ell)} \big ) - \avgdec_\ell^{t+s_i^t(\ell)} \right\|\right.
   \\
  & \hspace{3.5em} + (1+\hat{L})
 \left. \left\|\avgdec^{t+s_i^t(\ell)} - \avgdec^t \right\| \right)
  \end{split}
  \\
  \begin{split}
  & 
  {\le}
  \smallsum_{\ell=1}^B  \left(
  \left\| \hatx_{(i,\ell)} \big ( \avgdec^{t+s_i^t(\ell)} \big ) - \hatx_{(i,\ell)} \big ( \bx_{(i,:)}^{t+s_i^t(\ell)} \big )\right\| \right.\\
  & \hspace{3.5em} + \left\|  \hatx_{(i,\ell)} \big ( \bx_{(i,:)}^{t+s_i^t(\ell)} \big )- \tildex_{(i,\ell)}^{t+s_i^t(\ell)} \right\| \\
  &\hspace{3.5em} +
  \left\| \Delta \bx_{(i,\ell)}^{t+s_i^t(\ell)}
  \right \|
  \\
  & \hspace{3.5em}
  + \left\| \bx_{(i,\ell)}^{t+s_i^t(\ell)} - \avgdec_\ell^{t+s_i^t(\ell)} \right\|
  \\
  & \hspace{3.5em}
  + (1+\hat{L})\,
 \left.\left \|\avgdec^{t+s_i^t(\ell)} - \avgdec^t \right\| \right)\\
   & 
  \stackrel{(b)}{\le}
  \smallsum_{\ell=1}^B  \left(
\left\|  \hatx_{(i,\ell)} \big ( \bx_{(i,:)}^{t+s_i^t(\ell)} \big )- \tildex_{(i,\ell)}^{t+s_i^t(\ell)} \right\|\right. \\
  &\hspace{3.5em} +
  \left\| \Delta \bx_{(i,\ell)}^{t+s_i^t(\ell)}
  \right \|
  \\
    & \hspace{3.5em}
  +\,(1+\widehat{L}) \left\| \bx_{(i,:)}^{t+s_i^t(\ell)} - \avgdec^{t+s_i^t(\ell)} \right\|
  \\
  & \hspace{3.5em}
  + (1+\hat{L})\,
 \left.\left \|\avgdec^{t+s_i^t(\ell)} - \avgdec^t \right\| \right)
  \end{split}
\end{align*}
where in (a) and (b) we used
the Lipschitz continuity of $\hatx_{(i,\ell)} (\bullet)$. We show next that the four terms on the RHS of the above inequality are all asymptotically vanishing, which proves \eqref{eq:vanish_dec}.                                                                                                                                                                                                                                                                                       
Invoking Proposition~\ref{prop:consensus} [cf.~\eqref{eq:consenus_agreement}], we have
\begin{equation*}
\lim_{t \to \infty} \left\| \bx_{(i,\ell)}^{t+s_i^t(\ell)} - \avgdec_\ell^{t+s_i^t(\ell)} \right\| = 0,
\end{equation*}
for all  $\ell\in \until{B}$ and $i\in \until{N}$. By definition of $t+s_i^t(\ell)$, there exists  some $\overline{\mathcal{T}}\subseteq \mathbb{N}_+$, with $|\overline{\mathcal{T}}|=\infty$, such that  
\begin{align*}
&\lim_{t \to \infty} \left\|\hatx_{(i,\ell)} \big ( \bx_{(i,:)}^{t+s_i^t(\ell)} \big )- \tildex_{(i,\ell)}^{t+s_i^t(\ell)}\right\| 
\\
&\qquad = \lim_{\overline{\mathcal{T}}\ni t\to\infty}
   \left \| 
      \hatx_{(i,\ell_i^t)} \big( \bx_{(i,:)}^t \big) - \tildex_{(i,\ell_i^t)}^t
    \right\|\stackrel{\eqref{eq:best_response_consistency}}{=} 0, 
\end{align*}
for all  $\ell\in \until{B}$ and $i\in \until{N}$.
Using \eqref{eq:xtilde_vs_x}, we have
$\lim_{t \to \infty} \|\Delta \bx_{(i,\ell)}^{t+s_i^t(\ell)} \| = 0$,
which, together to \eqref{eq:vanishing_avgdec_diff}, yields 
\begin{align*}
  \lim_{t\to \infty} \left\| \avgdec^{t+s_i^t(\ell)} - \avgdec^t \right\| = 0,
\end{align*}
for all $\ell \!\in\! \until{B}$ and $ i\! \in\! \until{\!N}$, completing the proof.

\bibliographystyle{IEEEtran}
\bibliography{distributed_nonconvex_blocks}

\begin{thebibliography}{10}
\providecommand{\url}[1]{#1}
\csname url@samestyle\endcsname
\providecommand{\newblock}{\relax}
\providecommand{\bibinfo}[2]{#2}
\providecommand{\BIBentrySTDinterwordspacing}{\spaceskip=0pt\relax}
\providecommand{\BIBentryALTinterwordstretchfactor}{4}
\providecommand{\BIBentryALTinterwordspacing}{\spaceskip=\fontdimen2\font plus
\BIBentryALTinterwordstretchfactor\fontdimen3\font minus
  \fontdimen4\font\relax}
\providecommand{\BIBforeignlanguage}[2]{{%
\expandafter\ifx\csname l@#1\endcsname\relax
\typeout{** WARNING: IEEEtran.bst: No hyphenation pattern has been}%
\typeout{** loaded for the language `#1'. Using the pattern for}%
\typeout{** the default language instead.}%
\else
\language=\csname l@#1\endcsname
\fi
#2}}
\providecommand{\BIBdecl}{\relax}
\BIBdecl

\bibitem{notarnicola2017cdc}
I.~Notarnicola, Y.~Sun, G.~Scutari, and G.~Notarstefano, ``Distributed big-data
  optimization via block-iterative convexification and averaging,'' in
  \emph{IEEE Conf. on Decision and Control (CDC)}, 2017, pp. 2281--2288.

\bibitem{notarnicola2017camsap}
------, ``Distributed big-data optimization via block communications,'' in
  \emph{IEEE Intern. Conf. on Comput. Advances in Multi-Sensor Adaptive
  Process. (CAMSAP)}, 2017, pp. 557--561.

\bibitem{nesterov2012efficiency}
Y.~Nesterov, ``Efficiency of coordinate descent methods on huge-scale
  optimization problems,'' \emph{SIAM J. on Optimization}, vol.~22, no.~2, pp.
  341--362, 2012.

\bibitem{richtarik2012parallel}
P.~Richt{\'a}rik and M.~Tak{\'a}{\v{c}}, ``Parallel coordinate descent methods
  for big data optimization,'' \emph{Mathematical Programming}, pp. 1--52,
  2012.

\bibitem{necoara2016parallel}
I.~Necoara and D.~Clipici, ``Parallel random coordinate descent method for
  composite minimization: Convergence analysis and error bounds,'' \emph{SIAM
  J. on Optimization}, vol.~26, no.~1, pp. 197--226, 2016.

\bibitem{Wright2015}
S.~J. Wright, ``Coordinate descent algorithms,'' \emph{Mathematical
  Programming}, vol. 151, no.~1, pp. 3--34, 2015.

\bibitem{facchinei2015parallel}
F.~Facchinei, G.~Scutari, and S.~Sagratella, ``Parallel selective algorithms
  for nonconvex big data optimization,'' \emph{IEEE Trans. on Signal Process.},
  vol.~63, no.~7, pp. 1874--1889, 2015.

\bibitem{Multi-agent-book}
G.~Scutari and Y.~Sun, ``Parallel and distributed successive convex
  approximation methods for big-data optimization,'' in \emph{Multi-Agent
  Optimization}, F.~Facchinei and J.-S. Pang, Eds.\hskip 1em plus 0.5em minus
  0.4em\relax Springer, C.I.M.E. Foundation Subseries (Lecture Notes in
  Mathematics), 2018, pp. 1--158.

\bibitem{mokhtari2016doubly}
A.~Mokhtari, A.~Koppel, and A.~Ribeiro, ``Doubly random parallel stochastic
  methods for large scale learning,'' in \emph{{IEEE} American Control Conf.
  {(ACC)}}, 2016, pp. 4847--4852.

\bibitem{nedic2009distributed}
A.~Nedi\'{c} and A.~Ozdaglar, ``Distributed subgradient methods for multi-agent
  optimization,'' \emph{IEEE Trans. on Autom. Control}, vol.~54, no.~1, pp.
  48--61, 2009.

\bibitem{nedic2010constrained}
A.~Nedi{\'c}, A.~Ozdaglar, and P.~A. Parrilo, ``Constrained consensus and
  optimization in multi-agent networks,'' \emph{IEEE Trans. on Autom. Control},
  vol.~55, no.~4, pp. 922--938, 2010.

\bibitem{bianchi2013convergence}
P.~Bianchi and J.~Jakubowicz, ``Convergence of a multi-agent projected
  stochastic gradient algorithm for non-convex optimization,'' \emph{IEEE
  Trans. on Autom. Control}, vol.~58, no.~2, pp. 391--405, 2013.

\bibitem{nedic2015distributed}
A.~Nedi{\'c} and A.~Olshevsky, ``Distributed optimization over time-varying
  directed graphs,'' \emph{IEEE Trans. on Autom. Control}, vol.~60, no.~3, pp.
  601--615, 2015.

\bibitem{tatarenko2017nonconvex}
T.~Tatarenko and B.~Touri, ``Non-convex distributed optimization,'' \emph{IEEE
  Trans. on Autom. Control}, vol.~62, no.~8, pp. 3744--3757, 2017.

\bibitem{benezit2010weighted}
F.~B{\'e}n{\'e}zit, V.~Blondel, P.~Thiran, J.~Tsitsiklis, and M.~Vetterli,
  ``Weighted gossip: Distributed averaging using non-doubly stochastic
  matrices,'' in \emph{IEEE Intern. Symposium on Information Theory {(ISIT)}},
  2010, pp. 1753--1757.

\bibitem{jakovetic2014fast}
D.~Jakoveti{\'c}, J.~Xavier, and J.~M. Moura, ``Fast distributed gradient
  methods,'' \emph{IEEE Trans. on Autom. Control}, vol.~59, no.~5, pp.
  1131--1146, 2014.

\bibitem{lee2016asynchronous}
S.~Lee and A.~Nedi{\'c}, ``Asynchronous gossip-based random projection
  algorithms over networks,'' \emph{IEEE Trans. on Autom. Control}, vol.~61,
  no.~4, pp. 953--968, 2016.

\bibitem{margellos2018distributed}
K.~Margellos, A.~Falsone, S.~Garatti, and M.~Prandini, ``Distributed
  constrained optimization and consensus in uncertain networks via proximal
  minimization,'' \emph{IEEE Trans. on Autom. Control}, vol.~63, no.~5, pp.
  1372--1387, 2018.

\bibitem{shi2015extra}
W.~Shi, Q.~Ling, G.~Wu, and W.~Yin, ``Extra: An exact first-order algorithm for
  decentralized consensus optimization,'' \emph{SIAM J. on Optimization},
  vol.~25, no.~2, pp. 944--966, 2015.

\bibitem{shi2015proximal}
------, ``A proximal gradient algorithm for decentralized composite
  optimization,'' \emph{IEEE Trans. on Signal Process.}, vol.~63, no.~22, pp.
  6013--6023, 2015.

\bibitem{zanella2011newton}
F.~Zanella, D.~Varagnolo, A.~Cenedese, G.~Pillonetto, and L.~Schenato,
  ``{N}ewton-{R}aphson consensus for distributed convex optimization,'' in
  \emph{IEEE Conf. on Decision and Control and European Control Conf.
  {(CDC-ECC)}}, 2011, pp. 5917--5922.

\bibitem{zanella2012asynchronous}
------, ``Asynchronous {N}ewton-{R}aphson consensus for distributed convex
  optimization,'' in \emph{3rd IFAC Workshop on Distributed Estimation and
  Control in Networked Systems}, 2012.

\bibitem{varagnolo2016newton}
D.~Varagnolo, F.~Zanella, A.~Cenedese, G.~Pillonetto, and L.~Schenato,
  ``{N}ewton-{R}aphson consensus for distributed convex optimization,''
  \emph{IEEE Trans. on Autom. Control}, vol.~61, no.~4, pp. 994--1009, 2016.

\bibitem{dilorenzo2015distributed}
P.~Di~Lorenzo and G.~Scutari, ``Distributed nonconvex optimization over
  networks,'' in \emph{{IEEE} Intern. Conf. on Comput. Advances in Multi-Sensor
  Adaptive Process. ({CAMSAP})}, 2015, pp. 229--232.

\bibitem{dilorenzo2016next}
------, ``{N}{E}{X}{T}: In-network nonconvex optimization,'' \emph{IEEE Trans.
  on Signal and Information Process. over Networks}, vol.~2, no.~2, pp.
  120--136, 2016.

\bibitem{xu2015augmented}
J.~Xu, S.~Zhu, Y.~C. Soh, and L.~Xie, ``Augmented distributed gradient methods
  for multi-agent optimization under uncoordinated constant stepsizes,'' in
  \emph{IEEE Conf. on Decision and Control (CDC)}, 2015, pp. 2055--2060.

\bibitem{sun2016distributed}
Y.~Sun, G.~Scutari, and D.~Palomar, ``Distributed nonconvex multiagent
  optimization over time-varying networks,'' in \emph{{IEEE} {A}silomar Conf.
  on Signals, Systems, and Computers}, 2016, pp. 788--794.

\bibitem{sun2017distributed}
Y.~Sun and G.~Scutari, ``Distributed nonconvex optimization for sparse
  representation,'' in \emph{{IEEE} Intern. Conf. on Speech and Signal Process.
  ({ICASSP})}, 2017, pp. 4044--4048.

\bibitem{xu2018convergence}
J.~Xu, S.~Zhu, Y.~C. Soh, and L.~Xie, ``Convergence of asynchronous distributed
  gradient methods over stochastic networks,'' \emph{IEEE Trans. on Autom.
  Control}, vol.~63, no.~2, pp. 434--448, 2018.

\bibitem{nedic2017achieving}
A.~Nedi{\'c}, A.~Olshevsky, and W.~Shi, ``Achieving geometric convergence for
  distributed optimization over time-varying graphs,'' \emph{SIAM J. on
  Optimization}, vol.~27, no.~4, pp. 2597--2633, 2017.

\bibitem{qu2017harnessing}
G.~Qu and N.~Li, ``Harnessing smoothness to accelerate distributed
  optimization,'' \emph{IEEE Trans. on Control of Network Systems}, vol.~PP,
  no.~99, pp. 1--14, 2017.

\bibitem{xi2018addopt}
C.~Xi, R.~Xin, and U.~A. Khan, ``{ADD-OPT}: Accelerated distributed directed
  optimization,'' \emph{IEEE Trans. on Autom. Control}, vol.~63, no.~5, pp.
  1329--1339, 2018.

\bibitem{xin2018linear}
R.~Xin and U.~A. Khan, ``A linear algorithm for optimization over directed
  graphs with geometric convergence,'' \emph{IEEE Control Systems Letters},
  vol.~2, no.~3, pp. 325--330, 2018.

\bibitem{carli2015analysis}
R.~Carli, G.~Notarstefano, L.~Schenato, and D.~Varagnolo, ``Analysis of
  {N}ewton-{R}aphson consensus for multi-agent convex optimization under
  asynchronous and lossy communications,'' in \emph{IEEE Conf. on Decision and
  Control (CDC)}, 2015, pp. 418--424.

\bibitem{nedic2016cdc}
A.~Nedich, A.~Olshevsky, and W.~Shi, ``A geometrically convergent method for
  distributed optimization over time-varying graphs,'' in \emph{IEEE Conf. on
  Decision and Control (CDC)}, 2016, pp. 1023--1029.

\bibitem{qu2016cdc}
G.~Qu and N.~Li, ``Harnessing smoothness to accelerate distributed
  optimization,'' in \emph{IEEE Conf. on Decision and Control (CDC)}, 2016, pp.
  159--166.

\bibitem{carli2013distributed}
R.~Carli and G.~Notarstefano, ``Distributed partition-based optimization via
  dual decomposition,'' in \emph{IEEE Conf. on Decision and Control (CDC)},
  2013, pp. 2979--2984.

\bibitem{notarnicola2016randomized}
I.~Notarnicola and G.~Notarstefano, ``A randomized primal distributed algorithm
  for partitioned and big-data non-convex optimization,'' in \emph{IEEE Conf.
  on Decision and Control (CDC)}, 2016, pp. 153--158.

\bibitem{notarnicola2018partitioned}
I.~Notarnicola, R.~Carli, and G.~Notarstefano, ``Distributed partitioned
  big-data optimization via asynchronous dual decomposition,'' \emph{IEEE
  Trans. on Control of Network Systems}, vol.~PP, no.~99, pp. 1--10, 2018.

\bibitem{wang2018coordinate}
C.~Wang, Y.~Zhang, B.~Ying, and A.~H. Sayed, ``Coordinate-descent diffusion
  learning by networked agents,'' \emph{IEEE Trans. on Signal Process.},
  vol.~66, no.~2, pp. 352--367, 2016.

\bibitem{tsitsiklis1986distributed}
J.~Tsitsiklis, D.~Bertsekas, and M.~Athans, ``Distributed asynchronous
  deterministic and stochastic gradient optimization algorithms,'' \emph{IEEE
  Trans. on Autom. Control}, vol.~31, no.~9, pp. 803--812, 1986.

\bibitem{zhu2010discrete}
M.~Zhu and S.~Mart{\'i}nez, ``Discrete-time dynamic average consensus,''
  \emph{Automatica}, vol.~46, no.~2, pp. 322--329, 2010.

\bibitem{kia2018tutorial}
S.~S. Kia, B.~Van~Scoy, J.~Cort{\'e}s, R.~A. Freeman, K.~M. Lynch, and
  S.~Mart{\'i}nez, ``Tutorial on dynamic average consensus: the problem, its
  applications, and the algorithms,'' \emph{preprint arXiv:1803.04628}, 2018.

\bibitem{sayed2014adaptation}
A.~H. Sayed, ``Adaptation, learning, and optimization over networks,''
  \emph{Foundations and Trends in Machine Learning}, vol.~7, no. 4-5, pp.
  311--801, 2014.

\end{thebibliography}

\end{document}